\newtheorem{theorem}{Theorem}
\numberwithin{theorem}{subsection}
\newtheorem{lemma}[theorem]{Lemma}
\newtheorem{prop}[theorem]{Proposition}
\newtheorem{corollary}[theorem]{Corollary}
\newtheorem{conjecture}[theorem]{Conjecture}
\newtheorem{question}[theorem]{Question}
\theoremstyle{definition}
\newtheorem{definition}[theorem]{Definition}
\newtheorem{remark}[theorem]{Remark}
\newtheorem{example}[theorem]{Example}
\newcommand{\mb}{\mathbb}
\newcommand{\tb}{\textbf}
\newcommand{\mc}{\mathcal}
\newcommand{\tn}{\textnormal}
\newcommand{\maj}{\tn{maj}}
\newcommand{\qbin}[2]{\begin{bmatrix}{#1}\\ {#2}\end{bmatrix}_q}
\newcommand{\lam}{\lambda}
\newcommand{\tcb}{\textcolor{blue}}
\newcommand{\pr}{\tn{pr}}
\newcommand{\lcm}{\tn{lcm}}
\newcommand{\T}{\tn{\textsf{T}}}
\newcommand{\SYT}{\tn{\textsf{SYT}}}
\title{Orbit lengths for promotion on 2-row and near-hook tableaux}
\author{Laura Pierson \\ \href{mailto:lcpierson73@gmail.com}{lcpierson73@gmail.com}}
\begin{document}

\maketitle

\begin{abstract}
    Promotion has been well-studied for rectangular standard Young tableaux, in which case the orbit lengths divide the total number of boxes and are described by a cyclic sieving phenomenon (CSP), but little is known about the orbit lengths for tableaux of general shape. We approach this problem by building a stable sequence of tableaux where we fix the bottom portion and add extra boxes to the first row to get $n$ total boxes, with $n$ varying. We show that for 2-row tableaux with a fixed bottom row, the orbit lengths are divisors of certain monic polynomials in $n$, with degree generally equal to the number of distinct lengths of runs of consecutive numbers in the bottom row. For the subsets of 2-row tableaux where all runs have the same length, we show that the orbit lengths are characterized by a CSP polynomial that is a slightly modified version of the major index generating function, like in the rectangle case. We also show that for any stable sequence of tableaux, the orbit lengths are linear in $n$ as long as all non-first-row entries differ from each other by at least 2, which asymptotically happens for almost all tableaux in the limit as $n\to\infty.$ We also calculate the orbit lengths for near-hook tableaux, which are divisors of certain linear or quadratic polynomials in $n$.
\end{abstract}

\section{Introduction}

Promotion is an operation on standard Young tableaux introduced by Sch\"utzeberger in \cite{schutzenberger1972promotion} that is closely related to jeu-de-taquin and has been extensively studied for rectangular tableaux. In \cite{haiman1992dual}, Haiman shows that for rectangular and staircase tableaux, the order of promotion (i.e. the number of times it must be repeated to recover the starting tableau) always divides the total number of boxes. Rhoades \cite{rhoades2010cyclic} shows using representation theory that the set of orbit lengths for promotion of rectangular tableaux exhibits the cyclic sieving phenomenon (CSP) of Reiner, Stanton, and White \cite{reiner2004cyclic}. Alternative representation theoretic proofs for that CSP are given by Westbury \cite{westbury2016invariant, westbury2019interpolating}, and geometric proofs are given by Fontaine and Kamnitzer \cite{fontaine2014cyclic} and Purbhoo \cite{purbhoo2013wronskians}. Schank, Sekheri, and Djermane \cite{sekheri2014csp} prove a similar CSP theorem for staircase tableaux. A visual interpretation for the rectangle-case CSP in terms of webs is given by Petersen, Pylyavskyy, and Rhoades \cite{petersen2009promotion} for the 2-row and 3-row cases, and by Gaetz, Pechenik, Pfannerer, Striker, and Swanson \cite{gaetz2023rotation, gaetz2024promotion} for the 4-row case. Catania, Kendrick, Russell, and Tymoczko \cite{catania2025identifying} give an algorithm for determining the exact orbit length for promotion given a specific rectangular tableau.

However, the orbit lengths for promotion on tableaux of other shapes are more complicated and have generally been believed not to have a nice structure, or an associated CSP. For instance, Striker \cite{striker2017dynamical} gives as an example that the order of promotion for tableaux of the 2-row shape (8,6) is 7,554,844,752. However, we show here that the orbit lengths for 2-row tableaux and certain other non-rectangular tableaux do in fact have an understandable structure which we describe precisely, and we also show that there is in fact a CSP closely related to the CSP for rectangular tableaux if one restricts to certain subsets of 2-row tableaux. The large-looking promotion order 7,554,844,752 from that example should really be thought of as the least common multiple of a number of much smaller orbit lengths for various subsets of tableaux, which we will describe precisely in \S\ref{sec:2row}. 

\subsection{Main results and organization}

The remainder of this paper is organized as follows:
\begin{itemize}
    \item In \S\ref{sec:background}, we give background on Young tableaux and promotion (\S\ref{subsec:promotion_background}), and on cyclic sieving (\S\ref{subsec:csp_background}).
    
    \item In \S\ref{sec:generic}, we introduce our general approach here of building a stable sequence of tableaux $T[n]$ by fixing the bottom portion $T$ and adding boxes to the top row to get $n$ total boxes. In Theorem \ref{thm:generic} we show that as long as the differences between the non-first-row entries are all at least 2, the orbit length divides a certain constant multiple of $n-1$, and in Theorem \ref{thm:asymptotic}, we show that in the limit as $n\to\infty$, the fraction of tableaux falling under this ``generic" case approaches 1.
    
    \item In \S\ref{sec:2row_linear}, we prove a CSP theorem (Theorem \ref{thm:2row_linear}) on the subsets of 2-row tableaux of a given shape such that all runs of consecutive numbers in the bottom row are the same length (with some subtleties about runs being allowed to ``wrap around" from $n$ back to the beginning, and about the runs needing to be sufficiently far apart from each other). We show in particular that the orbit lengths divide certain monic linear polynomials in $n$. Then in Theorem \ref{thm:maj_index}, we show that the CSP polynomial almost matches the major index generating function for the tableaux in these subsets.
    
    \item In \S\ref{sec:2row}, we characterize in Theorem \ref{thm:2row} the general orbit lengths for 2-row tableaux, which we show are divisors of monic polynomials in $n$ of degree equal to the number of distinct lengths of runs of consecutive numbers in the bottom row (again with some subtleties about runs being allowed to wrap around, and about runs needing to either be far apart from each other or to be counted in a somewhat non-obvious manner). The main idea is to imagine the runs of each length moving together around a circular track that rotates independently from and at a slightly different speed from the tracks for runs of different lengths.
    
    \item In \S\ref{sec:hook_plus_box}, we show that for near-hooks (i.e. hooks plus one extra box), the orbit lengths are divisors of either linear or quadratic polynomials in $n$, and we characterize the linear polynomials in Proposition \ref{prop:hook_box_linear_1} and Theorem \ref{thm:hook_box_linear_2}, and the quadratic polynomials in Theorem \ref{thm:hook_box_quadratic}. In this case, the polynomials essentially depend on the number of runs of consecutive non-first-row numbers (but not on their exact lengths) and on the number of extra ``singleton" non-first-row numbers that are not part of the runs.
\end{itemize}

\subsection{Future directions}

There are many more questions to ask here. Based on experiments in Sage \cite{sage}, the orbit lengths for the tableaux $T[n]$ built from a fixed starting tableau $T$ are best described not as divisors of fixed polynomials in $n$ (although they are for the cases studied here), but rather as divisors of fixed \emph{\tb{\tcb{quasipolynomials}}} in $n$, meaning piecewise functions of $n$ where the cases depend only on the value of $n$ mod some fixed number, and where the function is a polynomial in each case. We pose this as a conjecture:

\begin{conjecture}
    For every fixed tableau $T$, there exists a quasipolynomial in $n$ such that for $n$ sufficiently large, the orbit length of $T[n]$ always divides that quasipolynomial (provided $T[n]$ is a standard Young tableau).
\end{conjecture}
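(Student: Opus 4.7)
The plan is to build on the structural results already established in the paper for 2-row tableaux (Theorem \ref{thm:2row}) and generic tableaux (Theorem \ref{thm:generic}), and extend the ``runs-based'' analysis to arbitrary bottom portions $T$. The key idea is to view promotion on $T[n]$ as cyclic dynamics on the positions of maximal runs of consecutive non-first-row entries, where each run effectively rotates around a virtual circle whose circumference depends linearly on $n$, but at a speed that depends on its length and the lengths of nearby runs. Quasipolynomiality should emerge because the precise way runs wrap around and interact depends on $n$ modulo certain integers derived from $T$.

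First, I would decompose $T$ into its runs and singletons of non-first-row entries, and, following the 2-row template, track the positions of these runs as $n$ increases. For large $n$, most of the dynamics should be captured by a uniform drift (promotion cyclically shifts entries by roughly one per iteration) together with occasional collisions between runs, where jeu-de-taquin produces a finite set of possible local rearrangements. The collection of such ``interaction patterns'' that actually occur, together with their relative timings, should be shown to depend only on $n$ modulo some fixed integer $N = N(T)$ determined by the lengths and relative positions of the runs in $T$ (for instance, $N$ could be taken to be the \tn{lcm} of the run lengths together with the gaps between them).

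Next, for each residue class $r \bmod N$, I would argue that the orbit length of $T[n]$, as a function of $n$ within that class, divides a polynomial $p_r(n)$ whose degree is controlled by the number of distinct run lengths, exactly as in Theorem \ref{thm:2row} for the 2-row case. Packaging these finitely many $p_r$ into a single quasipolynomial $q(n)$ (for example by taking component-wise the \tn{lcm} with a dummy polynomial of suitable degree in each residue class) would give the desired $q$ with $\tn{ord}(T[n]) \mid q(n)$ for all sufficiently large $n$. The generic case of Theorem \ref{thm:generic} already realizes the simplest instance of this picture, where $q$ is just a constant multiple of $n-1$, independent of the residue class.

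The hard part will be proving that the interaction-pattern dynamics really do stabilize into a genuine finite-state system depending only on $n \bmod N$, especially when $T$ has more than two rows. In multi-row tableaux, jeu-de-taquin slides can couple the motion of runs in different rows in subtle ways, and a priori there is no obvious reason the state space of an associated automaton should be finite, nor that its transitions depend periodically on $n$. I expect the cleanest route is an induction on the number of rows of $T$, using Theorem \ref{thm:2row} as the base case and peeling off one row at a time, treating the newly-exposed row as a 2-row problem over a ``reduced'' alphabet determined by the remaining rows. Establishing this finiteness and periodicity rigorously, and then extracting explicit polynomial bounds within each residue class, is the main obstacle to the conjecture.
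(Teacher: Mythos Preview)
The statement you are attempting is a \emph{conjecture} in the paper, not a theorem: the paper explicitly poses it as an open problem in the ``Future directions'' section, motivated by Sage experiments, and offers no proof or proof sketch. So there is no paper proof to compare your proposal against.

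Your proposal is not a proof but a research plan, and you are candid about this (``The hard part will be proving that\ldots'', ``is the main obstacle to the conjecture''). The outline is reasonable in spirit---tracking runs, expecting periodicity of interaction patterns in $n$, and inducting on the number of rows---and is consistent with the philosophy behind the paper's 2-row and near-hook results. But several of the key steps are assertions rather than arguments. In particular: (i) you assert that the set of interaction patterns depends only on $n \bmod N(T)$ for some explicit $N(T)$, but give no mechanism forcing this periodicity; (ii) the induction-on-rows idea of ``peeling off one row at a time, treating the newly-exposed row as a 2-row problem'' is not justified, since jeu-de-taquin paths in a multi-row tableau need not decompose row by row, and the 2-row analysis in the paper relies heavily on the fact that at most one entry can leave $T$ per step (Lemma~\ref{lem:promoted_entry}), which fails for general shapes; and (iii) even granting a finite-state automaton, you would still need to show the orbit length within each residue class is a \emph{divisor} of a polynomial, not merely bounded by one. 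As written, this is a plausible strategy memo, not a proof, and the conjecture remains open.
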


One could of course still say in such a case that the orbit length is a divisor of a fixed polynomial obtained by multiplying all the cases of the quasipolynomial, but it is more informative to describe the orbit lengths in terms of the quasipolynomial. The overarching question would then be to describe how to compute these polynomials and/or quasipolynomials in general:

\begin{question}
    Given a tableau $T$ such that the orbit lengths $T[n]$ divide some polynomial or quasipolynomial in $n$, how can we compute that (quasi)polynomial?
\end{question}

A related question is whether our CSP results here can be generalized to other cases, and whether they have any sort of algebraic or geometric interpretation:

\begin{question}
    Is there a representation theoretic or geometric interpretation for the CSP results in \S\ref{subsec:CSP}? Relatedly, are there other tableaux $T$ where the orbit lengths are described by a CSP polynomial, and does that CSP polynomial have such an interpretation?
\end{question}

\section{Background}\label{sec:background}

\subsection{Young tableaux and promotion}\label{subsec:promotion_background}

A \emph{\tb{\tcb{partition}}} $\lam = \lam_1\dots\lam_\ell$ is a nonincreasing sequence of positive integers $\lam_1\ge \dots \ge \lam_\ell.$ We call $\ell = \ell(\lam)$ is its \emph{\tb{\tcb{length}}}, and $|\lam|:=\lam_1+\dots+\lam_\ell$ is its \emph{\tb{\tcb{size}}}. For $n> |\lambda|+\lambda_1$, we write $\lambda[n]$ for the partition obtained from $\lambda$ by appending a new largest part of size $n-|\lambda|$ to get a partition of $n$. 

A \emph{\tb{\tcb{Young diagram}}} of shape $\lam$ consists of $\ell$ left-justified rows of boxes of lengths $\lam_1,\dots,\lam_\ell$ from top to bottom. A \emph{\tb{\tcb{Young tableau}}} $T$ of shape $\lam$ is obtained by putting a positive integer in each box of the Young diagram for $\lam$, and we write $|T|=|\lam|$ for the number of boxes in $T$. A \emph{\tb{\tcb{semistandard Young tableau (SSYT)}}} is one in which the entries weakly increase from left to right across each row and from top to bottom down each column. A \emph{\tb{\tcb{standard Young tableau (SYT)}}} $T$ is an SSYT where the entries are $1,2,\dots,|T|$ in some order. We write $f^\lam$ for the number of SYT of shape $\lam$.

For $T$ an SSYT with all distinct entries $i_1 < i_2 < \dots < i_{|T|}$, we write $\SYT(T)$ for the corresponding SYT formed from $T$ by replacing $i_k$ with $k$ for each $k$. We also write $T[n]$ for the tableau formed from $T$ by appending above $T$ a top row of length $n-|T|$ containing all the entries among $\{1,2,\dots,n\}$ that are not entries of $T$. We will generally be interested in cases where $T[n]$ is an SYT. 

For an SSYT $T$ with all distinct entries $i_1<i_2<\dots<i_{|T|}$ (which may or may not be an SYT), we define the \emph{\tb{\tcb{promotion}}} action on $T$ as follows:
\begin{enumerate}
    \item Remove the smallest (top left) entry, leaving an empty box.
    \item At each subsequent step such that there is still a box directly to the right of or below the current empty box, slide the smaller of those two entries (or the one that exists, if there is only one) into the current empty box, leaving a new empty box. (These are \emph{\tb{\tcb{jeu-de-taquin}}} slides.)
    \item Once the empty box has no boxes below it or to its right, replace each entry $i_k$ with $i_{k-1}$ for $2\le k\le |T|$.
    \item Place $i_{|T|}$ in the new empty box.
\end{enumerate}
(Some sources define promotion to be the inverse of this action.) We write $\mc{P}(T)$ for the tableau obtained by doing promotion on $T$, and $\pr(T)$ for the period of the promotion action on $T$, i.e. the minimum number of promotion steps required to recover the original tableau $T$. Note that $\SYT(\mc{P}(T))=\mc{P}(\SYT(T))$, since we can equivalently do promotion on $T$ by first replacing $i_k$ with $k$ for every $k$ to get $\SYT(T)$, then doing a promotion step on $\SYT(T)$, and then turning the entry $k$ back into $i_k$. Thus $\pr(T) = \pr(\SYT(T)).$

\begin{example}
    For the starting tableau $T$ shown below, we would compute $\mc{P}(T)$ through the steps shown:
    \begin{center}
        \ytableausetup{boxsize=0.5cm}
        $T = $\begin{ytableau}
            2 & 5 & 9 \\
            6 & 7
        \end{ytableau}
        $\rightarrow$
        \begin{ytableau}
            {} & 5 & 9 \\
            6 & 7 \\
        \end{ytableau}
        $\rightarrow$
        \begin{ytableau}
            5 &  & 9 \\
            6 & 7 \\
        \end{ytableau}
        $\rightarrow$
        \begin{ytableau}
            5 & 7 & 9 \\
            6 & \\
        \end{ytableau}
        $\rightarrow$
        \begin{ytableau}
            2 & 6 & 7 \\
            5 &  \\
        \end{ytableau}
        $\rightarrow$
        \begin{ytableau}
            2 & 6 & 7 \\
            5 & 9 \\
        \end{ytableau} $= \mc{P}(T).$
        \end{center}
        The corresponding promotion action on $\SYT(T)$ would be computed as shown:
        \begin{center}
        $\SYT(T) = $\begin{ytableau}
            1 & 2 & 5 \\
            3 & 4
        \end{ytableau}
        $\rightarrow$
        \begin{ytableau}
            {} & 2 & 5 \\
            3 & 4 \\
        \end{ytableau}
        $\rightarrow$
        \begin{ytableau}
            2 &  & 5 \\
            3 & 4 \\
        \end{ytableau}
        $\rightarrow$
        \begin{ytableau}
            2 & 4 & 5 \\
            3 & \\
        \end{ytableau}
        $\rightarrow$
        \begin{ytableau}
            1 & 3 & 4 \\
            2 &  \\
        \end{ytableau}
        $\rightarrow$
        \begin{ytableau}
            1 & 3 & 4 \\
            2 & 5 \\
        \end{ytableau} $= \mc{P}(\SYT(T)$.
        \end{center}
        The full promotion orbit for $T$ is
        \begin{center}
            $T = $\begin{ytableau}
                2 & 5 & 9 \\ 
                6 & 7
            \end{ytableau}
            $\rightarrow \mc{P}(T) = $
            \begin{ytableau}
                2 & 6 & 7 \\ 
                5 & 9
            \end{ytableau}
            $\rightarrow \mc{P}^2(T) =$
            \begin{ytableau}
                2 & 5 & 6 \\ 
                7 & 9
            \end{ytableau}
            $\rightarrow \mc{P}^3(T) = $
            \begin{ytableau}
                2 & 5 & 9 \\ 
                6 & 7
            \end{ytableau}
            $ = T,$
        \end{center}
        and the corresponding orbit for $\SYT(T)$ is
        \begin{center}
            $\SYT(T) = $
            \begin{ytableau}
                1 & 2 & 5 \\ 
                3 & 4
            \end{ytableau}
            $\rightarrow$
            \begin{ytableau}
                1 & 3 & 4 \\ 
                2 & 5
            \end{ytableau}
            $\rightarrow$
            \begin{ytableau}
                1 & 2 & 3 \\ 
                4 & 5
            \end{ytableau}
            $\rightarrow$
            \begin{ytableau}
                1 & 2 & 5 \\ 
                3 & 4
            \end{ytableau} $ = \SYT(T)$.
        \end{center}
         Since $\mc{P}^3(T) = T$, we have $\pr(T) = \pr(\SYT(T)) = 3.$
\end{example}

\subsection{Cyclic sieving and \texorpdfstring{$q$}{q}-analogues}\label{subsec:csp_background}

Given an action of a cyclic group $\mb{Z}_n=\langle g\rangle$ on a set $X$, and a polynomial $f(q)$, we say that the triple $$(X, \ \mb{Z}_n=\langle g\rangle,\ f(q))$$ exhibits the \emph{\tb{\tcb{cyclic sieving phenomenon}}} or is a \emph{\tb{\tcb{CSP triple}}} if for each $k$, $$|\{x\in X \mid g^k(x) = x\}| =f(\zeta_n^k)$$ whenever $\zeta_n$ is a primitive $n^\text{th}$ root of unity. Equivalently, for each $d\mid n,$ we need $$|\{x\in X \mid g^{n/d}(x) = x\}| = f(\zeta_d)$$ whenever $\zeta_d$ is a primitive $d^\text{th}$ root of unity. We call $f(q)$ the \emph{\tb{\tcb{CSP polynomial}}} for the action of $\mb{Z}_n = \langle g\rangle$ on $X$. CSP polynomials are often related to \emph{\tb{\tcb{$\boldsymbol{q}$-analogues}}} $$[n]_q := 1+q+\dots+q^{n-1} = \frac{q^n-1}{q-1}.$$ Many common combinatorial polynomials also have $q$-analogues. We will use the \emph{\tb{\tcb{$\boldsymbol{q}$-binomial coefficients}}} $$\qbin{n}{k} := \frac{[n]_q[n-1]_q\dots[n-k+1]_q}{[k]_q[k-1]_q\dots[1]_q}.$$

\section{Generic case linear orbits for arbitrary Young diagrams}\label{sec:generic}

In this section, we consider any stable sequence of tableaux shapes $\lam[n]$ formed by fixing the bottom portion $\lam$ and adding boxes to the first row. We will show that asymptotically as $n\to\infty,$ $100\%$ of tableaux of shape $\lam[n]$ fall into ``generic case" orbits whose length is a divisor of $c\cdot (n-1)$, where $c$ is one of a fixed set of constants depending on $\lam$ but not on $n$.

\subsection{Generic case orbit lengths}\label{subsec:generic_length}

\begin{theorem}\label{thm:generic}
    Suppose $T[n]$ is a standard Young tableau such that all entries of $T$ differ by at least 2 from each other, and 2 and $n$ are not both entries of $T$. Then $\pr(T[n])$ is a divisor of $$\frac{\lcm(|T|,\pr(\SYT(T)))}{|T|}\cdot(n-1).$$
\end{theorem}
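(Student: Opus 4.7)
The plan is to analyze one promotion step on $T[n]$ under the hypothesis and then bundle these into a full orbit count. The key dichotomy is whether the smallest entry currently sitting in a $T$-position equals $2$.

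In the ``simple'' case, where no $T$-position holds the value $2$, I would show that the jeu-de-taquin slide starting from the empty cell at $(1,1)$ after removing $1$ stays entirely in the top row: at each cell $(1,j)$ of the top row, the right-neighbor (top-row) entry $u_{j+1}$ and the below-neighbor ($T$) entry $t_j$ satisfy $u_{j+1} < t_j$. This is because the entries of $T$ being spaced by at least $2$ with smallest at least $3$ forces $t_j \ge 2j+1$, while a counting argument gives $u_{j+1} \le t_j - 1$. So the slide races along the top row, and the net effect of this promotion step is that every $T$-entry decreases by $1$ (positions unchanged) and $n$ is placed at the rightmost cell of the top row. In particular, the $\SYT$ of the sub-tableau at $T$-positions is unchanged.

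In the other case, where the smallest $T$-entry equals $2$, the slide descends into $T$ at $(2,1)$. Here I would show that the slide inside $T$ matches exactly one jeu-de-taquin slide of a promotion step applied to the sub-tableau at the $T$-positions, because the comparisons at interior $T$-positions depend only on the relative ordering of $T$-entries, i.e., on the $\SYT$ of that sub-tableau. After the slide, decrement, and placement of $n$ at the vacated inner corner of $T$, the $\SYT$ of the $T$-positions is replaced by $\mc{P}$ applied to itself, and the concrete values at each $T$-position shift in a controlled manner. The hypothesis that $2$ and $n$ are not both in $T$ is used here to guarantee that the placement of $n$ does not collide with the decrement of an existing $T$-entry equal to $n$, which would spoil the clean description.

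Letting $T^{(k)} := \SYT(\text{sub-tableau at $T$-positions of } \mc{P}^k(T[n]))$, each simple step leaves $T^{(k)}$ fixed while decreasing every $T$-entry by $1$, and each $T$-entering step replaces $T^{(k)}$ by $\mc{P}(T^{(k)})$ while resetting the smallest $T$-entry to a value at least $3$ determined by the new $T^{(k)}$. Over a full orbit of length $P := \pr(T[n])$, the total number of $T$-entering steps must be a multiple of $\pr(T)$ so that $T^{(P)} = T^{(0)}$; separately, the total number of steps must shift the concrete $T$-entries back to their starting values, which is a condition modulo $n-1$ coming from the cyclic rotation of the non-$1$ labels through the positions. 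Balancing these two coupled conditions shows $P$ divides a least common multiple that simplifies to $\tfrac{\lcm(|T|, \pr(T))}{|T|}(n-1)$. The main obstacle is the second case: cleanly verifying that the slide inside $T$ is exactly the promotion-step slide on the sub-$\SYT$, and tracking the precise concrete values at the $T$-positions through the interaction of simple and $T$-entering steps so that the accounting produces exactly the factor $\tfrac{\lcm(|T|, \pr(T))}{|T|}$ rather than a larger multiple; once that is in hand, the $(n-1)$ factor falls out of the cyclic rotation of concrete labels through the positions.
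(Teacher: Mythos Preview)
Your approach is essentially the paper's: the dichotomy into ``simple'' steps (slide stays in row $1$, every $T$-entry decrements) versus ``$T$-entering'' steps (the $2$ at $(2,1)$ slides up and the remaining slide is exactly promotion on the sub-$\SYT$) is the content of the paper's key lemma, and your bounds $t_j\ge 2j+1$, $u_{j+1}\le 2j$ are the same ones the paper uses.

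The one place your sketch is genuinely incomplete is the final accounting, which you yourself flag as the obstacle. What is missing is the exact ratio between the two kinds of steps. The paper observes that modulo $n-1$ \emph{every} $T$-entry decreases by $1$ at \emph{every} step (including $T$-entering steps, since $2\mapsto n\equiv 1$); hence the cyclic sequence of mod-$(n-1)$ gaps between consecutive $T$-entries is an invariant, and in each block of $n-1$ consecutive steps each of the $|T|$ entries passes through the value $2$ exactly once. So there are exactly $|T|$ $T$-entering steps per $n-1$ total steps. Returning to $T_0[n]$ then requires both (i) the gap sequence to be back at its original cyclic rotation, which needs a multiple of $|T|$ $T$-entering steps, and (ii) $\SYT(T)$ to be back, which needs a multiple of $\pr(T)$ $T$-entering steps. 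Thus $\lcm(|T|,\pr(T))$ $T$-entering steps suffice, which is $\tfrac{\lcm(|T|,\pr(T))}{|T|}\cdot(n-1)$ total steps. Your ``condition modulo $n-1$'' is the right intuition, but without the gap-sequence invariant and the count $|T|:(n-1)$ you cannot extract the denominator $|T|$; that count is also what makes the invariance of the spacing hypothesis along the orbit immediate (your remark about ``$n$ not colliding with a decremented $n$'' is really the statement that the wraparound gap is $\ge 2$, which is where the ``$2$ and $n$ not both in $T$'' hypothesis enters).
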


\begin{proof}
    We begin with the following lemma:

    \begin{lemma}\label{lem:generic}
        Throughout the promotion process:
        \begin{enumerate}[label=(\arabic*)]
            \item The sequence of mod $n-1$ gaps between consecutive entries of $T$ always stays the same up to cyclic shifts, where the smallest entry of $T$ minus the largest entry counts as one of those gaps.
            \item An entry of $T$ only gets promoted into the top row of $T[n]$ if it is a 2 in the top left box of $T$.
        \end{enumerate} 
    \end{lemma}

    \begin{proof}
        At steps where no entry of $T$ gets promoted into the top row, all entries of $T$ simply decrease by 1, so the sequence of gaps is preserved, and (1) holds. Now we can assume inductively that (1) has always held up to some particular tableau $T[n]$ in the promotion orbit, and it suffices to show that (1) and (2) both hold for the next promotion step.

        Our starting assumptions on $T$ imply that all gaps between entries of $T$, taken as mod $n-1$ values between 0 and $n-1$, are at least 2. This includes the mod $n-1$ value of the smallest minus largest entry. To see that, note that if the smallest entry is $j$ and the largest is $n-k$ for $j,k\ge 0$,  then their mod $n-1$ difference is $$j-(n-k)= j+k-n\equiv j+k-1\pmod{n-1}.$$ Now, $j+k-1\ne 0$ since we must have $j\ge 2$, as the 1 must be in the top row of $T[n]$ and hence not in $T$. It also cannot equal 1 because our assumption that 2 and $n$ are not both in $T$ implies that if $j=2$ then $k\ge 1$, hence $j+k-1\ge 2$. Now since (1) has held at every step so far by assumption, the mod $n-1$ gaps between entries of $T$ are the same as at the beginning of the orbit, so they are also all at least 2. 
        
        To prove (2), suppose some entry of $T$ gets promoted, and assume for contradiction it is the $k^\text{th}$ entry of the top row of $T$ for some $k\ge 2$. Since the top left entry of $T$ does not get promoted, it must be at least 3, and that top left entry is the smallest entry of $T$, so since consecutive entries of $T$ differ by at least 2, the $k^\text{th}$ entry in the top row of $T$ (shown in yellow below) is at least $3 + 2(k-1) = 2k + 1$. 
        
        Also, of the $2k$ numbers $1,2,\dots,2k$, at most $k-1$ of them can be in $T$, since the gaps of at least 2 between entries of $T$ mean we can at most have $3,5,7,\dots,2k-1$ in the starting tableau. Thus, at least $k+1$ of the numbers $1,2,\dots,2k$ are in the top row of $T[n]$, so the $(k+1)^{\text{st}}$ entry of the top row of $T[n]$ is at most $2k$. But then the $(k+1)^{\text{st}}$ entry of the top row of $T[n]$ is less than the $k^\text{th}$ entry of the top row of $T$:
        \begin{center}
        \ytableausetup{boxsize=1cm}
        \begin{ytableau}
            1 & 2 & & \cdots & & \le 2k & \cdots \\
            \ge 3 & \ge 5 & \ge 7 & \cdots & *(yellow) \scriptstyle{\ge 2k+1} & \cdots \\
            \vdots & \vdots
        \end{ytableau}
        \end{center}
        Thus, that $k^\text{th}$ entry in the second row will not actually get promoted. This is a contradiction, so in fact only a 2 can get promoted from $T$ into the top row, proving (2).

        It remains to show that (1) still holds in steps where an entry of $T$ gets promoted into the top row. From (2), that entry must be a 2, so in the next step $T$ loses a 2 and gains an $n$. Since $n\equiv 1\pmod{n-1}$, the 2 decreases by 1 (mod $n-1$). But all other entries of $T$ also decrease by 1 (mod $n-1$), so the mod $n-1$ gaps between them stay the same, proving (2).
    \end{proof}

    Now to prove Theorem \ref{thm:generic}, assume without loss of generality that the top left entry of the starting tableau $T_0[n]$ is a 2 (which we may do since if not it will become a 2 after some number of promotion steps). Then $\pr(T_0[n])$ is the minimum number of promotion steps until we reach a tableau $T[n]$ with the following properties:
    \begin{enumerate}[label=(\arabic*)]
        \item The top left entry of $T$ is a 2.
        \item The gaps between consecutive elements of $T$, starting with the gap from the 2 to the next smallest entry, are the same as for $T_0$.
        \item $\SYT(T) = \SYT(T_0)$.
    \end{enumerate}
    To see how often each of (1), (2), and (3) happens, we can imagine labeling the entries $i_1<i_2<\dots<i_{|T|}$ of $T_0$ with the numbers $1,2,\dots,|T|$, respectively. Then we can correspondingly label the entries of each subsequent tableau $T$ in the orbit, such that if an entry $i\ge 3$ of $T$ had label $k$, then $i-1$ has label $k$ in $\mc{P}(T[n])$, and if the 2 in $T$ has label $k$, then the $n$ has label $k$ in $\mc{P}(T[n])$. Note that the labeled entries in $T[n]$ will always be precisely the entries of $T$, since if $i\ge 3$ is in $T$, then $i-1$ will be in $T$ in the next step since only a 2 can get promoted out of $T$, and if $i=2$ is in $T$, it gets replaced by an $n$ in $T$.

    At steps where no entry of $T$ gets promoted into the first row, $\SYT(T)$ does not change since all entries of $T$ simply decrease by 1. At steps where the top left entry of $T$ is a 2 that gets promoted, a promotion step is performed on $\SYT(T)$. Thus, among steps where (1) holds, (3) holds once every $\pr(T)=\pr(T_0)$ of those steps, since that is when $\SYT(T)$ will have gone through a full promotion orbit and will be back to its starting configuration.

    Let $g_1,g_2,\dots,g_{|T|}$ be the sequence of mod $n-1$ gaps between the consecutive entries of $T_0$ when listed in increasing order, starting with $g_1$ as the gap between the 2 and the second smallest entry in $T_0$, and ending with $g_{|T|}$ as the smallest entry of $T_0$ minus the largest entry (mod $n-1$). The second time we reach a state where (1) holds, $g_2$ will now be the gap from the 2 to the next smallest entry, so the sequence of gaps will have cyclically shifted one position and become $g_2,g_3,\dots,g_{|T|},g_1$. Similarly, each time we return to a state where (1) holds, the sequence of gaps cyclically shifts by one position. Thus, after $|T|$ steps where (1) holds, we will be back to the starting sequence of gaps, and (2) will also hold.

    Note next that there are $|T|$ steps where (1) holds for every $n-1$ total promotion steps, because each of the $|T|$ labeled entries will decrease down to 2 before we get back to the entry labeled 1 being the 2. After $$\frac{\lcm(|T|,\pr(T))}{|T|}\cdot(n-1)$$ total promotion steps, we will have completed $\lcm(|T|,\pr(T))/|T|$ of these rounds that each include $|T|$ steps satisfying (1), for a total of $\lcm(|T|,\pr(T))$ steps satisfying (1). Since the total number of promotion steps will be a multiple of $n-1$, (1) will be satisfied. (2) will also be satisfied because the number of steps so far satisfying (1) will be a multiple of $|T|$, and (3) will hold because the number of steps satisfying (1) will be a multiple of $\pr(T)$. Since all three conditions hold, we will be back at the starting tableau $T[n]$ after $\lcm(|T|,\pr(T))/|T|\cdot(n-1)$ steps, so that number of steps must be a multiple of $\pr(T[n])$.
\end{proof}

    \begin{example}
        \ytableausetup{boxsize=0.4cm}
        Let $T=$ \begin{ytableau}
            4 & 6 \\
            8
        \end{ytableau} and $n=8,$ so $\SYT(T) = $ \begin{ytableau}
            1 & 2 \\
            3
        \end{ytableau} and $$T[8] = \begin{ytableau}
            1 & 2 & 3 & 5 & 7 \\
            4 & 6 \\
            8
        \end{ytableau}.$$ Then $\mc{P}(\SYT(T)) =$ \begin{ytableau}
            1 & 3 \\ 2
        \end{ytableau}
        and $\mc{P}(\mc{P}(\SYT(T))$ = \begin{ytableau}
            1 & 2 \\ 3
        \end{ytableau} $= \SYT(T)$, so $\pr(\SYT(T)) = 2$. So we should have $$\pr(T[8]) \mid \frac{\lcm(3,2)}{3}\cdot(8-1) = 2\cdot 7 = 14.$$
        If we track the entry that starts as the 8 (shown in orange), it will go through 2 full cycles during the orbit, going from 8 down to 2. During each of those cycles, there are 3 steps (shown in yellow) where an entry of $T$ becomes 2 and $\SYT(T)$ changes in the next step. Thus, $\SYT(T)$ goes through 6 promotion steps over the orbit, so it ends up back at its starting state since $\pr(\SYT(T))=2$.
        \begin{center}
        \ytableausetup{boxsize=0.4cm}
        \begin{tabular}{ccccccccc}
            &\begin{ytableau}
                1 & 2 & 3 & 5 & 7 \\
                4 & 6 \\
                *(orange) 8
            \end{ytableau} &
            \begin{ytableau}
                1 & 2 \\
                3
            \end{ytableau}
            & $\longrightarrow$ &
            \begin{ytableau}
                1 & 2 & 4 & 6 & 8 \\
                3 & 5 \\
                *(orange) 7
            \end{ytableau} &
            \begin{ytableau}
                1 & 2 \\
                3
            \end{ytableau}
            & $\longrightarrow$ &
            \begin{ytableau}
                1 & 3 & 5 & 7 & 8 \\
                *(yellow) 2 & *(yellow) 4 \\
                *(orange) 6
            \end{ytableau} &
            \begin{ytableau}
                *(yellow) 1 & *(yellow) 2 \\
                *(yellow) 3
            \end{ytableau} \\ \\

            $\longrightarrow$ &
            \begin{ytableau}
                1 & 2 & 4 & 6 & 7 \\
                3 & 8 \\
                *(orange) 5
            \end{ytableau} &
            \begin{ytableau}
                1 & 3 \\
                2
            \end{ytableau} &
            $\longrightarrow$ &
            \begin{ytableau}
                1 & 3 & 5 & 6 & 8 \\
                *(yellow) 2 & *(yellow) 7 \\
                *(orange) 4
            \end{ytableau} &
            \begin{ytableau}
                *(yellow) 1 & *(yellow) 3 \\
                *(yellow) 2
            \end{ytableau} &
            $\longrightarrow$ &
            \begin{ytableau}
                1 & 2 & 4 & 5 & 7 \\
                *(orange) 3 & 6 \\
                8
            \end{ytableau} &
            \begin{ytableau}
                1 & 2 \\
                3
            \end{ytableau} \\ \\

            $\longrightarrow$ &
            \begin{ytableau}
                1 & 3 & 4 & 6 & 8 \\
                *(orange) 2 & *(yellow) 5 \\
                *(yellow) 7
            \end{ytableau} &
            \begin{ytableau}
                *(yellow) 1 & *(yellow) 2 \\
                *(yellow) 3
            \end{ytableau} &&&& \\ \\
            $\longrightarrow$ &
            \begin{ytableau}
                1 & 2 & 3 & 5 & 7 \\
                4 & *(orange)8 \\
                6
            \end{ytableau} &
            \begin{ytableau}
                1 & 3 \\
                2
            \end{ytableau} &
            $\longrightarrow$ &
            \begin{ytableau}
                1 & 2 & 4 & 6 & 8 \\
                3 & *(orange)7 \\
                5
            \end{ytableau} &
            \begin{ytableau}
                1 & 3 \\
                2
            \end{ytableau}
            & $\longrightarrow$ &
            \begin{ytableau}
                1 & 3 & 5 & 7 & 8 \\
                *(yellow) 2 & *(orange) 6 \\
                *(yellow) 4
            \end{ytableau} &
            \begin{ytableau}
                *(yellow) 1 & *(yellow) 3 \\
                *(yellow) 2
            \end{ytableau} \\ \\
    
            $\longrightarrow$ &
            \begin{ytableau}
                1 & 2 & 4 & 6 & 7 \\
                3 & *(orange) 5 \\
                8
            \end{ytableau} &
            \begin{ytableau}
                1 & 2 \\
                3
            \end{ytableau} &
            $\longrightarrow$ &
            \begin{ytableau}
                1 & 3 & 5 & 7 & 8 \\
                *(yellow) 2 & *(orange) 4 \\
                *(yellow) 7
            \end{ytableau} &
            \begin{ytableau}
                *(yellow) 1 & *(yellow) 2 \\
                *(yellow) 3
            \end{ytableau}
            & $\longrightarrow$ &
            \begin{ytableau}
                1 & 2 & 4 & 6 & 7 \\
                *(orange) 3 & 8 \\
                6
            \end{ytableau} &
            \begin{ytableau}
                1 & 3 \\
                2
            \end{ytableau} \\ \\
            
            $\longrightarrow$ &
            \begin{ytableau}
                1 & 3 & 5 & 6 & 8 \\
                *(orange) 2 & *(yellow) 7 \\
                *(yellow) 5
            \end{ytableau} &
            \begin{ytableau}
                *(yellow) 1 & *(yellow) 3 \\
                *(yellow) 2
            \end{ytableau} &&&&&
        \end{tabular}
    \end{center}
    \end{example}

The reason why $\pr(T[n])$ could be a proper divisor of $\lcm(|T|,\pr(T))/|T|\cdot(n-1)$ instead of being equal to it is that the sequence of gaps $g_1,g_2,\dots,g_{|T|}$ could have $d$-fold symmetry for some $d$ and thus return to its starting configuration after some smaller number $|T|/d$ of cyclic shifts. In that case, for (2) to hold we only need the total number of steps where (1) holds to be a multiple of $|T|/d$ instead of a multiple of $|T|$. Since $g_1+g_2+\dots+g_{|T|}=n-1$, the $d$-fold symmetry implies that $g_1+\dots+g_{|T|/d} = (n-1)/d$. Thus, (2) will hold every $(n-1)/d$ steps, and each of those sequences of $(n-1)/d$ steps will include $|T|/d$ steps where (1) holds. For (3), we still need the total number of steps where (1) holds to be a multiple of $\pr(T)$. Thus, the number of steps where (1) holds must be at least $\lcm(|T|/d,\pr(T))$. That sequence of steps can then be split into a whole number of rounds each containing $|T|/d$ steps where (1) holds and $(n-1)/d$ steps total, so we get $(n-1)/d$ total steps per $|T|/d$ steps where (1) holds, and thus $(n-1)/|T|$ times as many total steps as steps where (1) holds. Thus, the promotion period in that case will instead be $$\frac{\lcm(|T|/d,\pr(T))}{|T|}\cdot(n-1),$$ which is smaller than the number of steps in Theorem \ref{thm:generic} if and only if $|T|/d$ and $\pr(T)$ have a smaller common multiple than $|T|$ and $\pr(T)$ do.

\subsection{Asymptotics}\label{subsec:asymptotics}

The promotion orbits described in Theorem \ref{thm:generic} are ``generic" in the following sense:

\begin{theorem}\label{thm:asymptotic}
    If we fix the shape $\lambda$ of $T$ and take a limit as $n\to\infty$, the fraction of tableaux of shape $\lambda[n]$ falling under Theorem \ref{thm:generic} approaches 1, so asymptotically almost all tableaux fall under this ``generic" case.
\end{theorem}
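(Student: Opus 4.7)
The plan is to show that $f^{\lam[n]}$ grows at least like $n^{|\lam|}$, while the number of SYT of shape $\lam[n]$ violating either hypothesis of Theorem \ref{thm:generic} grows at most like $n^{|\lam|-1}$. Together these force the non-generic fraction to be $O(1/n)\to 0$. There is no real obstacle once one makes the key observation that bounding the number of SYT of $\lam[n]$ with a fixed bottom-entry set by the constant $f^\lam$ reduces the whole problem to the elementary combinatorics of $|\lam|$-subsets of $\{1,\ldots,n\}$ with forbidden patterns.

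For the lower bound on $f^{\lam[n]}$, I would exhibit a large explicit family of SYT. Take any $|\lam|$-element subset $S\se\{|\lam|+1,\ldots,n\}$ and any SSYT $T$ of shape $\lam$ with entry set $S$; I claim the tableau $T[n]$ is automatically a valid SYT of shape $\lam[n]$. Indeed, the complement of $S$ in $\{1,\ldots,n\}$ contains all of $\{1,\ldots,|\lam|\}$, so the first $\lam_1\le|\lam|$ entries of the top row of $T[n]$ are exactly $1,2,\ldots,\lam_1$, while every entry of $T$ is at least $|\lam|+1$; the column condition therefore holds automatically. Since distinct pairs $(S,T)$ produce distinct SYT of $\lam[n]$, and there are $f^\lam$ such $T$ for each $S$, this gives $f^{\lam[n]}\ge f^\lam\cdot\binom{n-|\lam|}{|\lam|}=\Theta(n^{|\lam|})$.

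For the upper bound on bad SYT, I would classify each SYT of shape $\lam[n]$ by the $|\lam|$-element set $S\se\{1,\ldots,n\}$ of entries in its bottom portion. For a fixed $S$, the number of SYT of $\lam[n]$ with bottom entries $S$ is at most $f^\lam$, because such an SYT is determined by an SSYT arrangement of $S$ in shape $\lam$ (of which there are $f^\lam$) with the top row then forced in increasing order. Condition (a) of Theorem \ref{thm:generic} fails precisely when $S$ contains a consecutive pair, and the classical count of $|\lam|$-subsets of $\{1,\ldots,n\}$ with no two consecutive elements gives $\binom{n}{|\lam|}-\binom{n-|\lam|+1}{|\lam|}=O(n^{|\lam|-1})$ such $S$. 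Condition (b) fails precisely when $\{2,n\}\se S$, contributing at most $\binom{n-2}{|\lam|-2}=O(n^{|\lam|-2})$ further subsets. Multiplying by $f^\lam$, the total count of bad SYT is $O(n^{|\lam|-1})$, so the non-generic fraction is at most $O(n^{|\lam|-1})/\Theta(n^{|\lam|})=O(1/n)\to 0$, as desired.
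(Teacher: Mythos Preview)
Your argument is correct, and it takes a genuinely different route from the paper's. The paper computes both $f^{\lam[n]}$ and the number of \emph{generic} tableaux exactly: it uses the hook length formula to show $f^{\lam[n]}=\frac{f^\lam}{|\lam|!}n^{|\lam|}+O(n^{|\lam|-1})$, then argues (via a column-inequality check much like yours) that every choice of well-spaced entries placed in any of the $f^\lam$ ways yields a valid SYT, obtaining $f^\lam\bigl(\binom{n-|\lam|}{|\lam|}-\binom{n-|\lam|-3}{|\lam|-2}\bigr)$ generic tableaux, again $\frac{f^\lam}{|\lam|!}n^{|\lam|}+O(n^{|\lam|-1})$. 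Matching leading terms gives ratio $\to 1$.

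You instead bound the complement: a crude explicit construction gives $f^{\lam[n]}\ge f^\lam\binom{n-|\lam|}{|\lam|}=\Theta(n^{|\lam|})$, while the uniform bound of $f^\lam$ arrangements per entry-set $S$ reduces the bad count to elementary subset enumeration, yielding $O(n^{|\lam|-1})$. Your approach is more elementary---it avoids the hook length formula entirely---and directly delivers the $O(1/n)$ rate. The paper's approach yields the sharper information that both counts share the same leading coefficient $f^\lam/|\lam|!$, which is not needed for the theorem but is pleasant to know. Either is perfectly adequate here.
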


\begin{proof}
    From the hook length formula, the total number of tableaux of shape $\lambda[n]$ is $$f^{\lambda[n]} = \frac{n!}{(\tn{hook lengths within }\lambda)\cdot(\tn{hook lengths directly above }\lambda)\cdot(\tn{hook lengths in rest of first row})}.$$ By the hook length formula on $\lambda,$ the product of the hook lengths within $\lambda$ is $|\lambda|!/f^\lambda$. For $1\le i\le \lambda_i$, the length of the $i^\text{th}$ hook starting directly above $\lambda$ is $n-|\lambda|-i+1+\lambda_i^t$ where $\lambda^t$ is the transpose of $\lambda$. This is because the first row has total length $n-|\lambda|$, hence the $i^\text{th}$ hook includes $n-|\lambda|-i+1$ boxes from the first row plus $\lambda_i^t$ more boxes in its column. For the remaining $n-|\lambda|-\lambda_1$ boxes in the first row, the product of the hook lengths is just $(n-|\lambda|-\lambda_1)!$. Putting this together gives $$f^{\lambda[n]} = \frac{n(n-1)\dots(n-|\lambda|-\lambda_1+1)}{|\lambda|!/f^\lambda\cdot \prod_{i=1}^{\lambda_1}(n-|\lambda|-i+1+\lambda_i^t)} = \frac{f^\lambda}{|\lambda|!}\cdot n^{|\lambda|}+O(n^{|\lambda|-1}).$$ The last equality follows because the linear factors in the denominator cancel out $\lambda_1$ of the $|\lambda|+\lambda_1$ linear factors in the numerator, meaning there are $|\lambda|$ linear factors left, which multiply to give a polynomial of degree $|\lambda|$.

    Now, the tableaux satisfying the conditions in Theorem \ref{thm:generic} are the ones where all entries in $T$ differ from each other by at least 2, and also 2 and $n$ are not both in $T$. We can build such a tableau by first choosing which entries to put in $T$, and then choosing how to arrange them within $T$. 
    
    For choosing the entries, we can enforce the conditions of the smallest entry being at least 2 and each subsequent entry being at least 2 more than the previous entry by adding a buffer of size 1 before each entry, so we are arranging $|\lambda|$ blocks of size 2 and $n-2|\lambda|$ singletons, which can be done in $\binom{n-|\lambda|}{|\lambda|}$ ways. 
    
    However, we then need to subtract cases where both 2 and $n$ are in $T$. In that case, $|\lambda|-2$ more entries need to be chosen from among $4,5,\dots,n-2$ (so from among $n-5$ numbers) such that consecutive ones differ by at least 2. We can again enforce the differences by making each entry except the first into a block of 2 by adding a buffer of size 1 before it. We are thus arranging $|\lambda|-2$ blocks of size 2 and $n-5-2(|\lambda|-2)=n-2|\lambda|-1$ singletons, which can be done in $\binom{n-|\lambda|-3}{|\lambda|-2}$ ways. Putting this together, the number of ways to choose the entries is $$\binom{n-|\lambda|}{|\lambda|}-\binom{n-|\lambda|-3}{|\lambda|-2}.$$
    
    Now, the number of ways to arrange these entries within $T$ is just $f^\lambda$. To see that, it suffices to check that we will never have an entry in the top row of $T$ which is larger than the entry above it. Because of our condition on the entries in $T$, the $k^\text{th}$ entry in the top row of $T$ is at least $2k$ for every $k$. For the entry above it, i.e. the $k^\text{th}$ entry in the top row of $T[n]$, note that among the $2k-1$ numbers $1,2,\dots,2k-1$, we can at most put $2,4,\dots,2k-2$ in $T$, so at most $k-1$ of them can be in $T$. Thus, at least $k$ of them are in the top row of $T[n]$, so the $k^\text{th}$ entry of the top row of $T[n]$ is always at most $2k-1$ and hence is automatically less than the entry below it. Thus, every way of arranging the chosen entries within $T$ such that $T$ is a semistandard Young tableau will result in $T[n]$ being a standard Young tableau.

    It follows that the number of tableaux satisfying the conditions of Theorem \ref{thm:generic} is $$f^\lambda\cdot\left(\binom{n-|\lambda|}{|\lambda|}-\binom{n-|\lambda|-3}{|\lambda|-2}\right) = \frac{f^\lambda}{|\lambda|!}\cdot n^{|\lambda|}+O(n^{|\lambda|-1}).$$ Thus, the total number of standard Young tableaux of shape $\lambda[n]$ and the number of tableaux satisfying Theorem \ref{thm:generic} are both polynomials in $n$ with the same degree $|\lambda|$ and the same leading coefficient $f^{\lambda}/|\lambda|!$, so in the limit as $n\to\infty$, their ratio is 1, as claimed.
\end{proof}

\section{Cyclic sieving on 2-row Young tableaux with linear orbit lengths}\label{sec:2row_linear}

For 2-row Young tableaux, we will show that for a fixed $T$ and varying $n$, $\pr(T[n])$ is always a divisor of a certain monic polynomial in $n$, and we will describe how to compute those polynomials in general in \S \ref{sec:2row}. In this section, we will describe the cases when that polynomial is linear. Essentially, it is linear when the runs of consecutive numbers in $T$ all have the same length, as long as those runs are sufficiently far apart. We will also show that for the subsets of tableaux corresponding to a fixed run length $\ell$ and number of runs $r$, the orbit lengths are described by a CSP polynomial which is a slightly modified version of the major index generating function.

\subsection{Cyclic sieving theorem}\label{subsec:CSP}

\begin{definition}\label{def:one_length}
    For $n\ge (2r+1)\ell$, let $\T(n,\ell,r)$ be the set of standard Young tableaux $T[n]$ such that $T$ consists of a single row, and the entries of $T$ can be partitioned into $r$ \emph{\tb{\tcb{runs}}} of exactly $\ell$ consecutive numbers each, including possibly a \emph{\tb{\tcb{wraparound run}}} consisting of the numbers $k+1,k+2,\dots,2k$ together with $n-\ell+k+1,\dots,n$ for some $1\le k\le \ell-1$. We also require that if we subtract the starting number of each run from the starting number of the next run and take the remainder when that difference is divided by $n-\ell$, that remainder must be at least $2\ell$.
\end{definition}

If there is no wraparound run, the smallest run is counted as the ``next" run after the largest run, and those two runs are also required to satisfy the condition on the mod $n-\ell$ differences. If there is a wraparound run, the $n-\ell+k+1$ counts as its smallest element, and the wraparound run is still required to satisfy the mod $n-\ell$ difference requirement, with the largest non-wraparound run counting as the run before it, and the smallest non-wraparound run counting as the run after it.

\begin{theorem}\label{thm:2row_linear}
     For every $T[n]\in \T(n,\ell,r)$, $\tn{pr}(T[n])$ is a divisor of $n-\ell$. Furthermore, $\T(n,\ell,r)$ is closed under promotion, and $$\left(\T(n,\ell,r), \ \ \mb{Z}_{n-\ell}=\langle \mc{P}\rangle, \ \ \frac{[n-\ell]_q}{[r]_q}\qbin{n-\ell-2r\ell+r-1}{r-1}\right)$$ is a CSP triple with respect to the action of promotion on $\T(n,\ell,r)$.
\end{theorem}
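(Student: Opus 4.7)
The plan is to convert the question into one about cyclic shifts on subsets of $\mathbb{Z}/(n-\ell)$ with gap constraints, and then verify the CSP by root-of-unity computations via the $q$-Lucas theorem. To each $T[n]\in\T(n,\ell,r)$ I would associate the multiset of \emph{cyclic starts} of its $r$ runs, where a non-wraparound run $\{k,\dots,k+\ell-1\}$ has cyclic start $k$ and a wraparound run with parameter $k$ has cyclic start $n-\ell+k+1$. Identifying the $n-\ell$ possible starts $\{\ell+1,\dots,n\}$ with $\mathbb{Z}/(n-\ell)$, this gives a bijection
\[
\T(n,\ell,r)\;\longleftrightarrow\;\bigl\{\,r\text{-subsets of } \mathbb{Z}/(n-\ell)\text{ with all cyclic gaps}\ge 2\ell\,\bigr\},
\]
the right-hand gap condition being precisely the mod $n-\ell$ condition in Definition \ref{def:one_length}. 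The key structural claim is that one step of promotion corresponds to cyclic shift by $-1$ on the right. I would prove this by case analysis on the leftmost bottom-row entry $a_1$: if $a_1\ge\ell+2$, the inequality $a_j\ge 2j$ (which controls when the jeu-de-taquin slide descends at column $j$) together with the gap condition forces the slide path to stay in the top row, so the global decrement and placement of $n$ shifts every run's cyclic start down by $1$ while preserving its wrap/non-wrap form. If $a_1\le\ell+1$, exactly one run is at the boundary (either the first non-wraparound run starts at $k_1=\ell+1$, or there is a wraparound run), and tracing the slide shows it promotes exactly that run's critical element, after which the decrement and placement of $n$ transitions that run between wraparound and non-wraparound form with its cyclic start again decreasing by $1$. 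Closure of $\T(n,\ell,r)$ under $\mc{P}$ and the divisibility $\pr(T[n])\mid n-\ell$ follow immediately.

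For each divisor $d$ of $n-\ell$, I would then count the fixed points of $\mc{P}^{(n-\ell)/d}$. Under the bijection, a fixed point is an $r$-subset of $\mathbb{Z}/(n-\ell)$ invariant under shift by $(n-\ell)/d$; such a subset exists only when $d\mid r$, and in that case it is a disjoint union of $r/d$ shift-orbits of size $d$, corresponding bijectively to an $(r/d)$-subset of $\mathbb{Z}/((n-\ell)/d)$ with cyclic gaps $\ge 2\ell$. A direct count (mark one of the $r$ elements as distinguished, read off the gaps around the cycle as a linear composition of $n-\ell$ into $r$ parts each $\ge 2\ell$, and divide by $r$) gives $\tfrac{n-\ell}{r}\binom{n-\ell-2r\ell+r-1}{r-1}$ total configurations, and the same formula applied to the quotient cycle yields
\[
\#\{T[n]\in\T(n,\ell,r) : \mc{P}^{(n-\ell)/d}(T[n])=T[n]\}=\tfrac{n-\ell}{r}\binom{(n-\ell-2r\ell+r)/d-1}{r/d-1}
\]
when $d\mid r$, and $0$ otherwise.

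Finally I would match $f(\zeta_d)$ with this count for a primitive $d$-th root of unity $\zeta_d$. When $d\nmid r$, $[n-\ell]_q$ vanishes at $\zeta_d$ while $[r]_q$ does not, so $f(\zeta_d)=0$. When $d\mid r$, the identity $\tfrac{[n-\ell]_q}{[r]_q}=\tfrac{[(n-\ell)/d]_{q^d}}{[r/d]_{q^d}}$ specializes at $q=\zeta_d$ (where $q^d=1$) to $\tfrac{n-\ell}{r}$, and the $q$-Lucas theorem applied to $\qbin{n-\ell-2r\ell+r-1}{r-1}$ extracts the ordinary binomial $\binom{(n-\ell-2r\ell+r)/d-1}{r/d-1}$; their product is exactly the fixed-point count. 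I expect the main obstacle to be the first step: verifying that in the boundary case $a_1\le\ell+1$ the jeu-de-taquin slide really does execute a shift-by-$-1$ on cyclic starts while maintaining both the gap condition and the SYT property through the wraparound transition takes the most care, after which the counting and the root-of-unity matching are essentially routine.
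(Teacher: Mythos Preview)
Your proposal is correct and takes essentially the same approach as the paper: both encode $\T(n,\ell,r)$ as $r$-subsets of $\mathbb{Z}/(n-\ell)$ with all cyclic gaps $\ge 2\ell$, verify that promotion acts as the shift by $-1$ (via the same case split on whether the leftmost run is at the boundary, with the paper's Lemma~\ref{lem:promoted_entry} playing the role of your ``$a_j\ge 2j$'' criterion), and count fixed points of $\mc{P}^{(n-\ell)/d}$ as $\tfrac{n-\ell}{r}\binom{(n-\ell-2r\ell+r)/d-1}{r/d-1}$ via $d$-fold symmetric gap sequences. The only notable difference is in the root-of-unity verification: you invoke the $q$-Lucas theorem, whereas the paper pairs off the factors $q^{n-\ell-2r\ell+i}-1$ and $q^i-1$ directly and uses $(\zeta_d^{kd}-1)/(\zeta_d^d-1)\to k$; these are equivalent standard computations.
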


\begin{proof}
    Like we saw in Theorem \ref{thm:generic}, for many tableaux $T[n]$ in the orbit, all the entries in $T$ will simply decrease by 1 in the next promotion step, so the run lengths and gaps between the runs are preserved. The tricky part is when entries from $T$ become small enough that they get promoted out of $T$ into the top row. We can characterize exactly when that happens:
    
    \begin{lemma}\label{lem:promoted_entry}
        For any 2-row standard Young tableau $T[n]$, whether or not it is in $\T(n,\ell,r)$, the  $k^\text{th}$ entry of $T$ gets promoted into the top row if and only if it is equal to $2k$ and there is no $i<k$ such that the $i^\text{th}$ entry of $T$ is $2i$.
    \end{lemma}

    \begin{proof}
        First note at most one entry from $T$ can get promoted into the top row at a given step, since if one entry gets promoted into the top row, the remaining entries of $T$ just slide one position to the left within $T$. Thus, the $k^\text{th}$ entry of $T$ gets promoted into the top row if and only if $k$ is the smallest number such that the $k^\text{th}$ entry of $T$ is less than the $(k+1)^{\text{st}}$ entry of the top row. The $k^\text{th}$ entry of $T$ must be larger than the first $k-1$ entries of $T$ and the first $k$ entries in the top row of $T[n]$, so it must be larger than $2k-1$ other entries and thus must be at least $2k.$ The $(k+1)^{\text{st}}$ entry of the top row of $T[n]$ can at most be larger than the first $k$ entries of the top row and the first $k$ entries of $T$, so it must be at most $2k+1$. Thus, the only way the $k^\text{th}$ entry of $T$ can be smaller is if the $k^\text{th}$ entry of $T$ is exactly $2k$. In that case the $(k+1)^{\text{st}}$ entry of the top row must be the next smallest entry and so must be $2k+1$, so the $2k$ will get promoted out of $T$ unless some entry to its left is getting promoted out of $T$ instead.
    \end{proof}

    Now for $T[n]\in \T(n,\ell,r)$, entries from a run of length $\ell$ will start getting promoted once the entries in the run are $\ell+1,\ell+2,\dots,2\ell$, since then the $\ell^\text{th}$ entry of $T$ is $2\ell$ and hence gets promoted, while if the first entry of the run is more than $\ell+2$, then for every $1\le k\le \ell$ the $k^\text{th}$ entry of the run will be at least $\ell+1+k>2k$ and so will not get promoted. When the entries of the smallest run become $\ell+1,\ell+2,\dots,2\ell,$ the first $\ell+1$ entries of the top row will be $1,2,\dots,\ell,2\ell+1$, and the $2\ell$ shown in yellow is the entry that first gets promoted:
    \begin{center}
        \ytableausetup{boxsize=1cm}
        \begin{ytableau}
            1 & 2 & \cdots & \ell & 2\ell+1 & \cdots \\
            \ell+1 & \ell+2 & \cdots & *(yellow) 2\ell & \cdots
        \end{ytableau}
    \end{center}
    Then in the next step we gain a $2\ell-1$ in the top row (since the $2\ell$ moves up and then decreases by 1) and an $n$ in $T$, and the remaining entries of the run decrease by 1 to become $\ell,\ell+1,\dots,2\ell-2$:
    \begin{center}
        \ytableausetup{boxsize=1cm}
        \begin{ytableau}
            1 & 2 & \cdots & \ell-1 & 2\ell-1 & \cdots \\
            \ell & \ell+1 & \cdots & *(yellow) 2\ell-2 & \cdots
        \end{ytableau}
    \end{center}
    The $2\ell-2$ gets promoted next because it is now the $(\ell-1)^{\text{st}}$ entry of $T$ and is equal to $2(\ell-1)$, and in the next step the entries of the run in $T$ will be $\ell-1,\ell,\dots,2\ell-4$ together with $n-1$ and $n.$
    
    Continuing this process, the set of entries from the run goes through being $k+1,k+2,\dots,2k$ together with $n-\ell+k+1,\dots,n-1,n$ for each $k$ from $\ell$ down to 1, since at the $k^\text{th}$ step, the $2k$ (show in yellow below) gets promoted into the top row:
    \begin{center}
    \ytableausetup{boxsize=1cm}
        \begin{ytableau}
            1 & 2 & \cdots & k & 2k+1 & \cdots \\
            k+1 & k+2 & \cdots & *(yellow) 2k & \cdots
        \end{ytableau}
    \end{center}
    After $\ell$ steps, the whole run of length $\ell$ has moved from being $\ell+1,\ell+2,\dots,2\ell$ at the front of $T$ to being $n-\ell+1,\dots,n-1,n$ at the end of $T$. If we follow the progression of where the ``first" entry of the run is, where for a wraparound run we count the ``first" entry as being the first one in the latter part of the run, then the first entry decreases down to $\ell+1$, but then it skips $1,2,\dots,\ell$ and instead becomes $n$, then $n-1,n-2,\dots$ until it decreases down to $\ell+1$ again. Thus, the first entry of the run ranges through the $n-\ell$ values $\ell+1,\ell+2,\dots,n-1,n$, so after $n-\ell$ steps, the first run is back to where it started. The same holds for each other run, which is essentially why all of $T$ will be back to its starting configuration after $n-\ell$ steps. 
    
    However, we need the assumptions about the gaps between the runs in order to ensure that it is actually always the entries from the smallest run that get promoted and not the entries of some larger run:

    \begin{lemma}
        Given our assumptions on the gaps between the runs, an entry of $T$ can only get promoted from the smallest run and not from a larger run.
    \end{lemma}

    \begin{proof}
        As seen above, after the entries from the smallest run decrease to become $\ell+1,\ell+2,\dots,2\ell$, all $\ell$ of them will get promoted into the first row over the next $\ell$ steps. So, if an entry from a larger run were to get promoted before that, it would have to get promoted while the first entry from the smallest run is at least $\ell+2,$ and hence its $\ell^\text{th}$ and largest entry is at least $2\ell+1$. For each subsequent run, its last entry is at least $2\ell$ more than the last entry of the previous run by our definition of $\T(n,\ell,r)$, so also its first entry is at least $\ell+1$ more than the last entry of the previous run, since its last entry is $\ell-1$ more than its first entry. Thus, its $j^\text{th}$ entry is at least $\ell+j$ more than the last entry of the previous run. Thus, after $i$ full runs and $j$ steps into the next run, we will be at the $(i\ell+j)^\text{th}$ entry of $T$, and its value will be at least $$2\ell+1 + 2\ell(i-1)+\ell+j = 2i\ell+\ell+j+1.$$ We have $j\le \ell$ since each run has length $\ell$, so $$2i\ell+\ell+j+1 \ge 2i\ell + 2j+1 > 2(i\ell+j).$$ Thus, the entry is more than twice the index of its position in $T$, so by Lemma \ref{lem:promoted_entry}, it will not get promoted out of $T$.
    \end{proof}

    Next we check that the set $\T(n,\ell,r)$ is closed under promotion:

    \begin{lemma}
        If $T[n]\in \T(n,\ell,r)$, then also $\mc{P}(T[n]) \in \T(n,\ell,r).$
    \end{lemma}

    \begin{proof}
        During steps where no entry gets promoted into the top row, all entries of $T$ simply decrease by 1, so the mod $n-\ell$ differences between the starting entry of each run and the starting entry of the next run do not change, nor do the run lengths, so the tableau remains in $\T(n,\ell,r).$ However, as seen above, it is also the case that during steps where entries of one of the runs start getting promoted into the top row, that run becomes a wraparound run exactly as described in Definition \ref{def:one_length}, and also its first entry continues to decrease by 1 mod $n-\ell$ at every step, since it generally decreases by 1 except when it skips from $\ell+1$ to $n$, and in that step it still decreases by 1 mod $n-\ell$. Since the start of each other run continues to decrease by 1 at each step while the entries of the run of interest get promoted and wrap around, the mod $n-\ell$ differences between the start of that run and the starts of the runs before and after it always stay the same, so they always remain at least $2\ell$. Thus, the tableau stays in $\T(n,\ell,r)$ at every step, as claimed.
    \end{proof}

    Since we know that $\T(n,\ell,r)$ is closed under promotion and that $\pr(T[n])\mid (n-\ell)$ for every $T[n]\in\T(n,\ell,r)$, promotion gives an action of the cyclic group $\mb{Z}_{n-\ell}=\langle \mc{P}\rangle$ on $\T(n,\ell,r)$, so it remains to count the number of fixed points of $\mc{P}^{(n-\ell)/d}$ for each $d\mid (n-\ell)$, and to check that it matches the value we get from plugging in a primitive $d^\text{th}$ root of unity to the claimed CSP polynomial.

    Since the sequence of differences between the starting entries of the runs is preserved at every step, and each run takes $n-\ell$ steps for its entries to return to their starting values at the front of $T$, the way we can get a period of $(n-\ell)/d$ instead of $n-\ell$ is if $T$ looks the same once a different run has moved to the front, meaning the sequence of gaps between starts of runs has $d$-fold symmetry under cyclic shifts. There are $r$ such gaps $g_1,g_2,\dots,g_r$ total, since we have $r$ runs, and altogether they need to add up to $n-\ell$, since the differences are taken mod $n-\ell$. Also, Definition \ref{def:one_length} requires $g_i\ge 2\ell$ for all $i$, and that is the only other requirement. Choosing such a sequence of gaps with $d$-fold symmetry is thus equivalent to choosing $g_1,g_2,\dots,g_{r/d}$ such that $$g_1+g_2+\dots+g_{r/d} = \frac{n-\ell}{d},\hspace{1cm}g_1,g_2,\dots,g_{r/d}\ge 2\ell.$$ Note that this can only be possible if $d\mid r$ in addition to $d\mid (n-\ell).$ If we subtract $2\ell$ from each gap so that the only requirement is that the new gaps be nonnegative, this becomes $$(g_1-2\ell)+\dots+(g_{r/d}-2\ell) = \frac{n-\ell}{d} - \frac rd\cdot 2\ell = \frac{n-\ell-2r\ell}{d}.$$ By stars and bars, the number of ways to have $r/d$ nonnegative differences with that sum is thus $$\binom{(n-\ell-2r\ell+r)/d-1}{r/d-1}.$$ Note that this sequence of gaps could also have more symmetry; we are just requiring that it have at least $d$-fold symmetry. This sequence of $r/d$ gaps then determines the full sequence of $r$ gaps because we can get the full gap sequence by repeating it $d$ times, so the above combination also counts the number of full gap sequences with at least $d$-fold symmetry.

    Now given such a gap sequence $g_1,g_2,\dots,g_r$, let $d'$ be maximal such that the sequence has $d'$-fold symmetry, so it is equivalent to the sequence $g_1,g_2,\dots,g_{r/d'}$ repeated $d'$ times, where the sequence $g_1,g_2,\dots,g_{r/d'}$ has no additional symmetry under cyclic shifts. Then to get a tableau corresponding to that gap sequence, we have $(n-\ell)/d'$ choices that actually give different tableaux for where the run before the size $g_1$ gap can start, because each time we add $(n-\ell)/d'$ to the start of that run, we get the same tableau by the symmetry in the overall gap sequence $g_1,g_2,\dots,g_r$. But actually, when we sum this over all gap sequences, every tableau gets counted $r/d'$ times, because it gets counted once for each of the $r/d'$ cyclic shifts of the gap sequence $g_1,g_2,\dots,g_{r/d'}$. Thus, the number of different tableaux is $\frac{(n-\ell)/d'}{r/d'} = (n-\ell)/r$ times the number of gap sequences. So, if we include only the gap sequences with at least $d$-fold symmetry that we counted above, the number of tableaux corresponding to gap sequences of that type (which is also the number of tableaux in $\T(n,\ell,r)$ fixed under $\mc{P}^{(n-\ell)/d}$), is $$\frac{n-\ell}{r}\binom{(n-\ell-2r\ell+r)/d-1}{r/d-1}.$$ We can already see that this matches the claimed CSP polynomial when we set $d=1$ and plug in $q=1$. To see that it matches for all $d$, note that our claimed CSP polynomial is equivalent to $$\frac{[n-\ell]_q}{[r]_q}\qbin{n-\ell-2r\ell+r-1}{r-1} = \frac{q^{n-\ell}-1}{q^r-1}\cdot\frac{(q^{n-\ell-2r\ell+r-1}-1)(q^{n-\ell-2r\ell+r-2}-1)\dots(q^{n-\ell-2r\ell+1}-1)}{(q^{r-1}-1)(q^{r-2}-1)\dots(q-1)}.$$ Now we are interested in plugging in a primitive $d^\text{th}$ root of unity $\zeta_d$ for $q$. In the second fraction, we can pair up each $q^i-1$ term in the denominator with the $q^{n-\ell-2r\ell+i}-1$ term in the numerator. Since $d\mid (n-\ell)$ and $d\mid r$, we also have $d\mid (n-\ell-2r\ell)$, so $\zeta_d^{n-\ell-2r\ell}=1$ and hence $\zeta_d^{n-\ell-2r\ell+i}-1 = \zeta_d^i-1.$ For $d\nmid i,$ we can thus cancel out the $\zeta_d^{n-\ell-2r\ell+i}-1$ factor in the numerator with the $\zeta_d^i-1$ factor in the denominator, since both factors equal the same nonzero value. What remains is thus $$\frac{q^{n-\ell}-1}{q^r-1}\cdot\frac{(q^{n-\ell-2r\ell+r-d}-1)(q^{n-\ell-2r\ell+r-2d}-1)\dots(q^{n-\ell-2r\ell+d}-1)}{(q^{r-d}-1)(q^{r-2d}-1)\dots(q^d-1)}.$$ All the powers of $q$ are divisible by $d$, so we can divide every factor by $q^d-1$ without changing the polynomial, since in both fractions there are the same number of factors in the numerator as the denominator. When we divide $q^{kd}-1$ by $q^d-1$ for any $k$, we get $$\frac{q^{kd}-1}{q^d-1} = q^{(k-1)d}+q^{(k-2)d}+\dots+q^{2d}+q^d+1.$$ When we set $q = \zeta_d,$ all terms on the right side become 1, so the right side becomes $k$. Thus, for each $k$, plugging in $q=\zeta_d$ is equivalent to replacing the $q^{kd}-1$ factor in our expression with $k$. So our expression becomes $$\frac{(n-\ell)/d}{r/d}\cdot\frac{\dfrac{n-\ell-2r\ell+r-d}{d}\cdot\dfrac{n-\ell-2r\ell+r-2d}{d}\dots\dfrac{n-\ell-2r\ell+d}d}{\dfrac{r-d}d\cdot\dfrac{r-2d}{d}\dots\dfrac dd},$$ which simplifies to $$\frac{n-\ell}r\cdot\frac{\left(\dfrac{n-\ell-2r\ell+r}d-1\right)\left(\dfrac{n-\ell-2r\ell+r}d-2\right)\dots\left(\dfrac{n-\ell-2r\ell}d+1\right)}{\left(\dfrac rd-1\right)\left(\dfrac rd - 2\right)\dots 1},$$ or equivalently, $$\frac{n-\ell}r\binom{(n-\ell-2r\ell+r)/d-1}{r/d-1},$$ which is what we wanted.
\end{proof}

    \begin{example}\label{ex:2row}
        Below is an orbit we get for $\ell=3$ and $n=16$, with one run shown in orange and the other in green. The orbit length is $n-\ell=16-3=13.$ For each run, the ``first" number cycles through the 13 values $16,15,\dots,4.$
        \begin{center}
        \begin{tabular}{cccc}
        \ytableausetup{boxsize=0.5cm}
        & \begin{ytableau}
            1 & 2 & 3 & 7 & 8 & 9 & 13 & 14 & 15 & 16 \\
            *(orange) 4 & *(orange) 5 & *(orange) 6 & *(green) 10 & *(green) 11 & *(green) 12 \\
        \end{ytableau}
        & $\longrightarrow$ & 
        \begin{ytableau}
            1 & 2 & 5 & 6 & 7 & 8 & 12 & 13 & 14 & 15 \\
            *(orange) 3 & *(orange) 4 & *(green) 9 & *(green) 10 & *(green) 11 & *(orange) 16 \\
        \end{ytableau} \\ \\ 
        
        $\longrightarrow$ & 
        \begin{ytableau}
            1 & 3 & 4 & 5 & 6 & 7 & 11 & 12 & 13 & 14 \\
            *(orange) 2 & *(green) 8 & *(green) 9 & *(green) 10 & *(orange) 15 & *(orange) 16 \\
        \end{ytableau} & 
        $\longrightarrow$ & 
        \begin{ytableau}
            1 & 2 & 3 & 4 & 5 & 6 & 10 & 11 & 12 & 13 \\
            *(green) 7 & *(green) 8 & *(green) 9 & *(orange) 14 & *(orange) 15 & *(orange) 16 \\
        \end{ytableau} \\ \\
        $\longrightarrow$ & 
        \begin{ytableau}
            1 & 2 & 3 & 4 & 5 & 9 & 10 & 11 & 12 & 16 \\
            *(green) 6 & *(green) 7 & *(green) 8 & *(orange) 13 & *(orange) 14 & *(orange) 15 \\
        \end{ytableau} & 
        $\longrightarrow$ & 
        \begin{ytableau}
            1 & 2 & 3 & 4 & 8 & 9 & 10 & 11 & 15 & 16 \\
            *(green) 5 & *(green) 6 & *(green) 7 & *(orange) 12 & *(orange) 13 & *(orange) 14 \\
        \end{ytableau} \\ \\
        $\longrightarrow$ & 
        \begin{ytableau}
            1 & 2 & 3 & 7 & 8 & 9 & 10 & 14 & 15 & 16 \\
            *(green) 4 & *(green) 5 & *(green) 6 & *(orange) 11 & *(orange) 12 & *(orange) 13 \\
        \end{ytableau} &
        $\longrightarrow$ &
        \begin{ytableau}
            1 & 2 & 5 & 6 & 7 & 8 & 9 & 13 & 14 & 15 \\
            *(green) 3 & *(green) 4 & *(orange) 10 & *(orange) 11 & *(orange) 12 & *(green) 16 \\
        \end{ytableau} \\ \\
        $\longrightarrow$ &
        \begin{ytableau}
            1 & 3 & 4 & 5 & 6 & 7 & 8 & 12 & 13 & 14 \\
            *(green) 2 & *(orange) 9 & *(orange) 10 & *(orange) 11 & *(green) 15 & *(green) 16 \\
        \end{ytableau} &
        $\longrightarrow$ &
        \begin{ytableau}
            1 & 2 & 3 & 4 & 5 & 6 & 7 & 11 & 12 & 13 \\
            *(orange) 8 & *(orange) 9 & *(orange) 10 & *(green) 14 & *(green) 15 & *(green) 16 \\
        \end{ytableau} \\ \\
        $\longrightarrow$ &
        \begin{ytableau}
            1 & 2 & 3 & 4 & 5 & 6 & 10 & 11 & 12 & 16 \\
            *(orange) 7 & *(orange) 8 & *(orange) 9 & *(green) 13 & *(green) 14 & *(green) 15 \\
        \end{ytableau} &
        $\longrightarrow$ &
        \begin{ytableau}
            1 & 2 & 3 & 4 & 5 & 9 & 10 & 11 & 15 & 16 \\
            *(orange) 6 & *(orange) 7 & *(orange) 8 & *(green) 12 & *(green) 13 & *(green) 14 \\
        \end{ytableau} \\ \\
        $\longrightarrow$ &
        \begin{ytableau}
            1 & 2 & 3 & 4 & 8 & 9 & 10 & 14 & 15 & 16 \\
            *(orange) 5 & *(orange) 6 & *(orange) 7 & *(green) 11 & *(green) 12 & *(green) 13 \\
        \end{ytableau} && \\
    \end{tabular}
    \end{center}
\end{example}


\subsection{Major index interpretation of the cyclic sieving polynomial}\label{subsec:maj_index}

For $\ell=1$, we will show that the above CSP polynomial matches the major index generating function for the tableaux in $\T(n,\ell,r)$, up to a power of $q$. For $\ell>1$, it does not quite match the major index generating function, but it does match the generating function for a modified version of the major index:

\begin{definition}
    The \emph{\tb{\tcb{major index}}} $\maj(T[n])$ of a tableau $T[n]$ is the sum of all numbers $k$ such that $k+1$ is in a strictly lower row than $k$ (called \emph{\tb{\tcb{descents}}}). Define the \emph{\tb{\tcb{$\boldsymbol{\ell}$-major index}}} $\maj_\ell(T[n])$ to be the same as the major index, except that descents $k$ with $k<\ell$ do not get added.
\end{definition}

\begin{example}
    Let $T[15]$ be the second tableau from Example \ref{ex:2row}:
    \begin{center}
        \begin{ytableau}
            1 & 2 & 5 & 6 & 7 & 8 & 12 & 13 & 14 & 15 \\
            *(orange) 3 & *(orange) 4 & *(green) 9 & *(green) 10 & *(green) 11 & *(orange) 16 \\
        \end{ytableau}
    \end{center}
    To compute $\maj(T[15])$, we count 2, 8, and 15 as all being descents, but to compute $\maj_3(T[15])$, we only count 8 and 15 as descents, since $2<3$. Thus, $\maj(T[15])=2+8+15=25$ while $\maj_3(T[15])=8+15=23.$
\end{example}

Thus, for a 2-row tableau $T[n]$, $\maj_\ell(T[n])$ is the sum of all entries $k\in T[n]$ such that $k$ is in the top row of $T[n]$, $k+1$ is in the second row $T$ of $T[n]$, and also $k\ge \ell$. Note that $\maj_1(T[n]) = \maj(T[n])$ is the normal major index.

\begin{theorem}\label{thm:maj_index}
    The CSP polynomial from Theorem \ref{thm:2row_linear} matches the generating function for the $\ell$-major index $\tn{maj}_\ell(T[n])$ over the tableaux in $\T(n,\ell,r)$, up to a power of $q$, namely:
    $$q^{r^2 \ell}\cdot \frac{[n-\ell]_q}{[r]_q}\qbin{n-\ell-2r\ell+r-1}{r-1} = \sum_{T[n]\in \T(n,\ell,r)} q^{\maj_\ell(T[n])}.$$
\end{theorem}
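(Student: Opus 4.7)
The plan is to compute $\maj_\ell(T[n])$ in closed form on $\T(n,\ell,r)$ and then sum the generating function directly. I would begin by identifying the descents: a regular run of length $\ell$ starting at entry $s$ produces exactly one descent, at position $s-1$, since its other consecutive entries are all paired with successors inside the same run. The wraparound run $\{k+1,\ldots,2k\}\cup\{n-\ell+k+1,\ldots,n\}$ instead produces two descents, one at $k$ and one at $n-\ell+k$; the first is excluded from $\maj_\ell$ because $k<\ell$, while the second is always counted. With the convention $s_1:=n-\ell+k+1$ for the wraparound run's ``start'', both cases unify into $\maj_\ell(T[n]) = \sum_{i=1}^{r} s_i - r$.

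Next I would parametrize $\T(n,\ell,r)$ and linearize the gap conditions by a change of variables. For a non-wraparound tableau the starts satisfy $\ell+1\le s_1$, $s_{i+1}-s_i\ge 2\ell$, the cyclic condition $s_r-s_1\le n-3\ell$, and the SYT validity $s_r\le n-\ell+1$; substituting $t_i:=s_i-(\ell+1)-2\ell(i-1)$ and letting $m:=n-(2r+1)\ell$ yields $0\le t_1\le\cdots\le t_r$ with $t_r-t_1\le m$ and $t_r\le m+\ell$, and a direct computation gives $\maj_\ell=r^2\ell+\sum_i t_i$. This is where the prefactor $q^{r^2\ell}$ in the target formula arises. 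For a wraparound tableau with parameter $k$, an analogous substitution on $s_2,\ldots,s_r$ gives $0\le t_1\le\cdots\le t_{r-1}\le m$ and $\maj_\ell=r^2\ell+(n-\ell)+r(k-\ell)+\sum_i t_i$.

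The generating function then breaks into three pieces, and the main technical obstacle is correctly handling the case split in the non-wraparound situation, where either the cyclic or the SYT upper bound on $t_r$ can be the binding one. When $t_1\le\ell$ the cyclic bound binds and the contribution works out to $q^{r^2\ell}[\ell+1]_{q^r}\qbin{m+r-1}{r-1}$; when $t_1\ge\ell+1$ the SYT bound binds and gives $q^{r^2\ell+r(\ell+1)}\qbin{m+r-1}{r}$; summing the wraparound case over $k\in\{1,\ldots,\ell-1\}$ contributes $q^{r^2\ell+n-\ell-r(\ell-1)}[\ell-1]_{q^r}\qbin{m+r-1}{r-1}$. Adding these, using $\qbin{m+r-1}{r}=\qbin{m+r-1}{r-1}\cdot[m]_q/[r]_q$ to factor out $\qbin{m+r-1}{r-1}$ and the identity $[r]_q[k]_{q^r}=[rk]_q$ to combine the remaining terms, reduces the claim to the polynomial identity
\[
[r(\ell+1)]_q+q^{r(\ell+1)}[m]_q+q^{n-\ell-r(\ell-1)}[r(\ell-1)]_q=[n-\ell]_q,
\]
which telescopes by two applications of $[a]_q+q^a[b]_q=[a+b]_q$: first $[r(\ell+1)]_q+q^{r(\ell+1)}[m]_q=[n-\ell-r(\ell-1)]_q$, and then adding the last term yields $[n-\ell]_q$.
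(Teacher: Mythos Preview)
Your proof is correct, and it follows a genuinely different route from the paper's argument. The paper never splits into wraparound and non-wraparound cases. Instead, it parametrizes \emph{all} tableaux in $\T(n,\ell,r)$ uniformly by descent sequences $\ell\le s_1<\dots<s_r\le n-1$ subject to $s_{k+1}-s_k\ge 2\ell$ and $s_r\le s_1+n-3\ell$ (so that $n-\ell+1\le s_r\le n-1$ picks out the wraparound cases). The paper then expands the $q$-binomial in the left side by the $q$-binomial theorem, shifts variables to obtain sequences $i_1'<\dots<i_r'$ with $i_1'=\ell$, rewrites $[n-\ell]_q/[r]_q$ as a ratio of geometric series in $q^r$ and $q^{n-\ell}$, and finally observes that multiplying by $1+q^{n-\ell}+\dots+q^{(r-1)(n-\ell)}$ corresponds combinatorially to an $r$-to-$1$ map from ``extended'' sequences (allowing $i_r''>n-1$) down to the $s$-sequences, obtained by cyclically shifting which run is declared first and adding $n-\ell$ to the wrapped entries.

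Your approach trades this cyclic $r$-to-$1$ correspondence for direct case analysis: you separate the wraparound contribution explicitly, linearize each piece to a simplex, and reassemble using $[r]_q[k]_{q^r}=[rk]_q$ and the telescoping identity $[a]_q+q^a[b]_q=[a+b]_q$. This is more computational but entirely self-contained and arguably easier to verify line by line; it also makes transparent exactly where the SYT bound versus the cyclic bound is active. The paper's approach is more structural---the $r$-to-$1$ map is essentially the statement that promotion rotates the gap sequence---so it connects more directly to the CSP interpretation in Theorem~\ref{thm:2row_linear}, but at the cost of a slightly slicker manipulation of the $[n-\ell]_q/[r]_q$ factor that takes some unpacking.
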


\begin{proof}
    Starting from the left side, a version of the $q$-binomial theorem implies that $$\qbin{n-\ell-2r\ell+r-1}{r-1} = \sum_{0=i_1\le i_2\le \dots \le i_{r} \le n-\ell-2r\ell}q^{i_1+\dots+i_{r}}.$$ Now we make the change of variables $$i_1'=i_1+\ell, \hspace{0.5cm} i_2' = i_2 + 3\ell,\hspace{0.5cm}\dots, \hspace{0.5cm} i_k' = i_k + (2k-1)\ell, \hspace{0.5cm}\dots, \hspace{0.5cm} i_r' = i_r + (2r-1)\ell.$$ Then the new sum is $$i_1' + \dots + i_r' = i_1 + \dots + i_r + (1+3+5+\dots+(2r-1))\ell = i_1 + \dots + i_r + r^2\ell,$$ and the new upper bound is $$i_r' = i_r + (2r-1)\ell \le n-2r\ell-\ell + 2r\ell-\ell = n-2\ell,$$ so the possible sequences of integers $i_1',\dots,i_r'$ that correspond to a sequence $i_1,\dots,i_r$ are the sequences $i_1',\dots,i_r'$ satisfying $$\ell = i_1'<i_2'<\dots < i_r' \le n-2\ell,\hspace{1cm}i_k'+2\ell \le i_{k+1}'\hspace{0.5cm}\tn{ for }1\le k\le r-1.$$ So we have $$q^{r^2\ell} \qbin{n-\ell-2r\ell+r-1}{r-1} = \sum_{\substack{\ell = i_1'<i_2'<\dots <i_r'\le n-2\ell, \\ i_k'+2\ell \le i_{k+1}'}}q^{i_1'+\dots+i_r'}.$$ 
    
    On the right side, note that we can write $$\maj_\ell(T[n])=s_1+\dots + s_r,$$ where $s_1+1,\dots,s_r+1$ are the starts of the runs in $T$. Our definition of $\T(n,\ell,r)$ is equivalent to the requirements $$\ell \le s_1<\dots < s_r \le n-1,\hspace{1cm}s_k+2\ell\le s_{k+1},\hspace{1cm}s_r \le s_1 + n-3\ell,$$ where the last requirement comes the fact that we require the mod $n-\ell$ residue of $s_1-s_r$, which is equal to $s_1-s_r + n-\ell$, to be at least $2\ell$. Thus, we can rewrite the right side of Theorem \ref{thm:2row_linear} as 
    \begin{equation}\label{eqn:maj_index_sum}
        \sum_{\substack{\ell \le s_1<\dots <s_r\le n-1, \\
        s_k+2\ell\le s_{k+1},\\
        s_r\le s_1 + n-3\ell}}q^{s_1+\dots+s_r}.
    \end{equation}
    Our sequences $i_1',\dots,i_r'$ correspond precisely to the subset of the $s_1,\dots,s_r$ sequences that satisfy $s_1=\ell.$

    Next, note that we can write 
    \begin{equation}\label{eqn:q_fraction}
        \frac{[n-\ell]_q}{[r]_q} = \frac{q^{n-\ell}-1}{q^r-1} = \frac{\left(\dfrac{q^{r(n-\ell)}-1}{q^r-1}\right)}{\left(\dfrac{q^{r(n-\ell)}-1}{q^{n-\ell}-1}\right)} = \frac{1+q^r+q^{2r}+\dots+q^{(n-\ell-1)r}}{1+q^{n-\ell}+q^{2(n-\ell)}+\dots+q^{(r-1)(n-\ell)}}.
    \end{equation} Now, in the product \begin{equation}\label{eqn:q_product}
        (1+q^r+q^{2r}+\dots+q^{(n-\ell-1)r})\sum_{\substack{\ell = i_1'<i_2'<\dots <i_r'\le n-2\ell, \\ i_k'+2\ell \le i_{k+1}'}}q^{i_1'+\dots+i_r'},
    \end{equation} 
    we can think of each term $$q^{jr}\cdot q^{i_1'+\dots+i_r'} = q^{(i_1'+j)+(i_2'+j)+\dots+(i_r'+j)}$$ as corresponding to $r$ numbers $i_1'',\dots,i_r''$ that are equal to $i_1',\dots,i_r'$ but shifted by $j$, so $i_k'' := i_k'+j.$ The smallest number $i_1'+j$ is now allowed to be $\ell,\ell+1,\dots,\ell+(n-\ell-1)=n-1.$ The conditions on the gaps between the numbers remain the same as before, so (\ref{eqn:q_product}) is equal to \begin{equation}\label{eqn:i_k_sum}
        \sum_{\substack{\ell\le i_1''\le n-1, \\ i_k''+2\ell \le i_{k+1}'', \\ i_r'' \le i_1'' + n-3\ell}}q^{i_1''+\dots+i_r''}. 
    \end{equation}
    Our possible sequences $i_1'',\dots,i_r''$ in (\ref{eqn:i_k_sum}) include all the desired sequences $s_1,\dots,s_r$ in (\ref{eqn:maj_index_sum}), but also additional sequences, because we require $s_r\le n-1$ we are not enforcing the requirement $i_r''\le n-1.$ 
    
    Each of our desired sequences $s_1,\dots,s_r$ actually corresponds to exactly $r$ sequences $i_1'',\dots,i_r''$. The first is the actual sequence $i_1''=s_1,\dots,i_r''=s_r.$ The second is the sequence $$i_1''=s_2,\hspace{0.5cm}i_2''=s_3,\hspace{0.5cm}\dots,\hspace{0.5cm}i_{r-1}''=s_r,\hspace{0.5cm}i_r'' = s_1+n-\ell.$$ In terms of the condition that the mod $n-\ell$ gaps between consecutive terms all be be at least $2\ell$, this is a valid $i_1'',\dots,i_r''$ sequence if and only if we start with a valid $s_1,\dots,s_r$ sequence, because adding $n-\ell$ to $s_1$ and ``wrapping it around" to become the last term does not change the mod $n-\ell$ gaps between consecutive terms, so the condition that the mod $n-\ell$ gaps all be at least $2\ell$ is satisfied by the sequence $i_1'',\dots,i_r''$ if and only if it is satisfied by the sequence $s_1,\dots,s_r$. The sum of this sequence $i_1'',\dots,i_r''$ is then $$i_1''+\dots+i_r''=s_1+\dots+s_r + n-\ell.$$ Similarly, $s_1,\dots,s_r$ also corresponds to the sequence $$i_1''=s_3,\hspace{0.5cm}i_2''=s_4,\hspace{0.5cm}\dots,\hspace{0.5cm}i_{r-2}''=s_r,\hspace{0.5cm}i_{r-1}''=s_1+n-\ell,\hspace{0.5cm}i_r''=s_2+n-\ell,$$ where we ``wrap around" both $s_1$ and $s_2$ by adding $n-\ell$ to both of them, and for this sequence the sum is $$i_1''+\dots+i_r'' = s_1+\dots+s_r + 2(n-\ell).$$ Continuing in this manner, for each $0\le k \le r-1,$ the $k^\text{th}$ sequence is $$i_1'' = s_{k+1},\hspace{0.5cm}\dots,\hspace{0.5cm}i_{r-k}''=s_r,\hspace{0.5cm}i_{r-k+1}=s_1+n-\ell,\hspace{0.5cm}\dots,\hspace{0.5cm}i_r''=s_k+n-\ell,$$ where the first $k$ terms $s_1,\dots,s_k$ get increased by $n-\ell$, so the sum is $$i_1''+\dots+i_r''=s_1+\dots+s_r+k(n-\ell).$$ Since the amounts that get added to the $s_1+\dots+s_r$ sums to get the corresponding $i_1''+\dots+i_r''$ sums are $0,n-\ell,2(n-\ell),\dots(r-1)(n-\ell),$ we can get from (\ref{eqn:i_k_sum}) to (\ref{eqn:maj_index_sum}) by dividing by $1+q^{n-\ell}+\dots+q^{(r-1)(n-\ell)}.$ But that is precisely the denominator in (\ref{eqn:q_fraction}), which completes the proof.
\end{proof}

\section{General orbit lengths for 2-row Young tableaux}\label{sec:2row}

The main idea for this section is that for a 2-row Young tableau $T[n]$ with $n$ sufficiently large, the orbit length $\pr(T[n])$ essentially depends only on how many runs of consecutive numbers there are in $T$, and what the lengths of the runs are. Based on what we saw in \S \ref{sec:2row_linear}, we can roughly think of all the runs of each length $\ell$ as moving together at a speed of $n-\ell$ steps per round, with the gaps between the runs being preserved. If there are $d$ different run lengths, then we essentially have $d$ different groups of runs, with each group moving independently from the other groups at a slightly different speed. 

If we fix a representative ``first run" in each group, then each tableau in the orbit is essentially determined by where the $d$ representative runs currently start, since the starting position for each other run is determined by the starting position of its group representative, together with the fixed gaps between runs within its group. There are $n-O(1)$ possible starting positions for each of the $d$ group representatives (assuming no extra symmetry), and those $d$ starting positions can be chosen almost independently of each other, so the number of tableaux within the orbit will be a divisor of some polynomial of the form $n^d-O(n^{d-1})$.

However, the exact polynomials will be somewhat complicated, because we need to describe exactly what happens when the runs overlap with each other. The remainder of this section is organized as follows:
\begin{itemize}
    \item When the runs overlap, they will temporarily look like runs of different lengths that are ``too close together," so in \S\ref{subsec:run_lengths} we describe how to read off the ``actual" run lengths from a given tableau.
    \item In \S\ref{subsec:2row_setup}, we introduce the main result of this section, Theorem \ref{thm:2row}, which gives a combinatorial interpretation for what exactly the orbit lengths are counting, which will essentially be the possible sets of $d$ different positions for the $d$ tracks of runs, assuming that within each track, the gaps between the runs stay fixed.
    \item In \S\ref{subsec:tableau_to_tracks} and \S\ref{subsec:tracks_to_tableau}, we describe how to translate between a tableau and its associated sequence of track positions and sequences of gaps between runs.
    \item In \S\ref{subsec:2row_proof}, we give the proof of Theorem \ref{thm:2row}.
    \item In \S\ref{subsec:2row_recursive}, we give a recursive formula to compute the orbit lengths more explicitly, showing that they are monic polynomials in $n$ with degree equal to the number of distinct run lengths.
\end{itemize}   


\subsection{Determining the run lengths from a tableau}\label{subsec:run_lengths}
    
    For each 2-row Young tableau $T[n]$, we will partition the entries of $T$ into \emph{\tb{\tcb{runs}}} using a somewhat subtle procedure. Most of the time, the runs will just be the sequences of consecutive numbers, as long as they are sufficiently far apart. However, for runs that are ``too close together" (which essentially means the number of missing numbers between the end of one run and the start of the next is less than both run lengths), we will actually want to consider them to be overlapping runs of different lengths than the obvious ones. This is because during most of the promotion orbit of $T[n],$ those runs will actually be replaced with runs of different ``stable" lengths that will persist for the majority of the orbit, and the orbit length will be characterized by the lengths of its ``stable" set of runs.
    
    To determine these ``stable" run lengths for a tableau $T[n]$, we first visualize a number line containing the numbers $1,2,\dots,n,$ and we mark the entries of $T$ with \emph{\tb{\tcb{dots}}}. Then we build a set of noncrossing arcs connecting each dot to an unmarked number (which we will call a \emph{\tb{\tcb{space}}}) using the following procedure:
    \begin{enumerate}
        \item Whenever there is an unpaired dot and the next unpaired number after it is a space, pair the dot with the space by drawing an arc between them. Continue until no more such pairings can be made.
    
        \item If all dots have been paired, we are done. If not, let the smallest unpaired dot be at position $k,$ so all unpaired numbers among $k,k+1,\dots,n$ are dots (or else we could make more pairings). Then find the smallest $j$ such that among the numbers $k,k+1,\dots,n$ together with $1,2,\dots,j,$ there are the same number of dots as spaces. Since in total there are more spaces than dots (as the first row of $T[n]$ is longer than the second row $T$), such a $j<k$ must exist. Remove any arcs involving the numbers $1,2,\dots,j$, and move those numbers to the end of the line after $n$, keeping them in the order $1,2,\dots,j.$
        
        
        \item Among the numbers $1,2,\dots,j,$ whenever we have an unpaired dot and the closest unpaired number \emph{before} it is a space, pair the dot with the space. Repeat until all dots among $1,2,\dots,j$ have been paired with a space. 
        
        If this fails, then there must be some $i$ such that among the numbers $1,2,\dots,2i,$ there are more dots than spaces, i.e. more than $i$ dots and fewer than $i$ spaces. But then in the tableau $T[n]$, the $i^\text{th}$ entry in the bottom row would be at most $2i$ (since it is a dot) while the $i^\text{th}$ entry in the top row would be more than $2i$ (since it is a space), so $T[n]$ would not be an SYT. Thus, this step cannot fail, so we will be able to pair every dot among $1,2,\dots,j$ with a space before it.

        \item Finally, pair the last unpaired dot among $k,k+1,\dots,n$ with the first unpaired space among $1,2,\dots,j$, and repeat until all the remaining unpaired numbers among $k,k+1,\dots,n$ (which were dots) have been paired with all the remaining unpaired numbers among $1,2,\dots,j$ (which were spaces). There will be no unpaired numbers left over among $k,k+1,\dots,n,1,2,\dots,j,$ since that set of numbers has the same number of dots as spaces.
    \end{enumerate}
We will later see that this arc diagram is telling us something about certain steps of the promotion process, but for now we will just describe how to use it to read off the run lengths. Once the pairing is complete, we can read off the run lengths as follows:
\begin{enumerate}
    \item Call each maximal sequence of $2\ell$ consecutive numbers such that the first $\ell$ numbers are paired with the last $\ell$ numbers a \emph{\tb{\tcb{rainbow}}}. For each sequence of adjacent rainbows with no extra numbers between them, remove all but the largest rainbow (or all but one of the largest rainbows, if there is a tie). Whenever a rainbow of $2\ell$ numbers is removed, count its $\ell$ dots as a run of length $\ell$.
    \item Once all remaining dots are in rainbows, count the dots in each rainbow of size $2\ell$ as a run of length $\ell.$
\end{enumerate}
For each set of distinct positive integers $\ell_1>\dots>\ell_d\ge 1,$ and run lengths $r_1,\dots,r_d\ge 1$, write $\T(n,\vec{\ell},\vec{r})$ for the set of tableaux with $r_i$ runs of length $\ell_i$ for each $i$, and no other runs, where $\vec{\ell}=(\ell_1,\dots,\ell_d)$ and $\vec{r}=(r_1,\dots,r_d)$. By the above procedure, every tableau $T[n]$ gets assigned to exactly one of these sets, and the run lengths will satisfy $r_1\ell_1+\dots+r_d\ell_d=|T|,$ since every entry in $T$ (i.e. every dot) gets assigned to exactly one run.

\begin{example}
    For the tableau $T[22]$ below, the gaps between runs are large enough that the runs are just the sequences of consecutive numbers in the bottom row:
    \begin{center}
        \ytableausetup{boxsize=0.6cm}
    \begin{ytableau}
        1 & 2 & 3 & 7 & 8 & 9 & 11 & 12 & 15 & 16 & 19 & 20 & 21 & 22 \\
        *(orange) 4 & *(orange) 5 & *(orange) 6 & *(green) 10 & *(red) 13 & *(red) 14 & *(cyan) 17 & *(cyan) 18
    \end{ytableau}
    \end{center}
    The longest run is $(4,5,6)$ of length 3, so $\ell_1 = 3$ and $r_1 = 1$, the next longest runs are $(13,14)$ and $(17,18)$, each of length 2, so $\ell_2 = 2$ and $r_2 = 2$, and the final run is $(10)$ of length 1, so $\ell_3 = r_3 = 1$. Thus, $\vec{\ell} = (3,2,1)$ and $\vec{r} = (1,2,1).$
\end{example}

\begin{example}\label{ex:finding_runs}
    Consider the tableau $T[20]$ shown below:
    \ytableausetup{boxsize=0.6cm}
    \begin{center}
        \begin{ytableau}
            1 & 3 & 4 & 5 & 6 & 7 & 8 & 11 & 14 & 15 & 16 & 19 \\
            *(green) 2 & *(red) 9 & *(cyan) 10 & *(red) 12 & *(red) 13 & *(green) 17 & *(orange) 18 & *(green) 20
        \end{ytableau}
    \end{center}
    At first glance, it looks like the runs are $(2),$ $(9,10)$, $(12,13)$, $(17,18)$, and $(20)$, but those turn out not to be the runs we want, because some of those runs are ``too close together." Instead, we need to use the pairing procedure described above. We are able to pair each dot with a space after it except for 17 and 20, so we need to move 1, 2, 3, and 4 to the end, since then among 17, 18, 19, 20, 1, 2, 3, 4, we have 4 dots and 4 spaces. With our notation above, that means $k=17$ and $j=4$. We then pair the dots as shown below:
    \begin{center}
        \includegraphics[width=15cm]{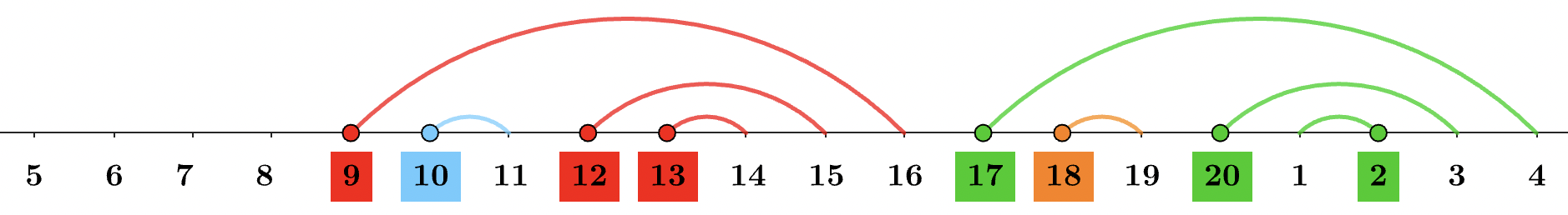}
    \end{center}
    Note that 2 gets paired with a space \emph{before} it since it got moved to the end, while every other dot gets paired with a space \emph{after} it (where we are counting 3 as being after 20, and 4 as being after 17, since 3 and 4 got moved to the end).
    
    Then to read off the run lengths, we remove the smallest rainbows $(10,11)$ and $(18,19)$, so $(10)$ and $(18)$ get counted as runs of length 1. With those rainbows removed, $(9,12,13,14,15,16)$ and $(17,20,1,2,3,4)$ are now rainbows, so $(9,12,13)$ and $(17,20,2)$ get counted as runs of length 3. We thus have run lengths $\ell_1=3$ and $\ell_2=1$, and numbers of runs $r_1=r_2=2,$ so the tableau belongs to $\T(20,(3,1),(2,2)).$ The colors in the diagram indicate run membership, with the colored numbers being the ones in $T$.
\end{example}

\subsection{Characterization of the 2-row orbits (Theorem \ref{thm:2row})}\label{subsec:2row_setup}

Now we will show that for $n$ sufficiently large, $\T(n,\vec{\ell},\vec{r})$ is closed under promotion, and we will describe what the orbits within it look like. The idea is that for each $1\le i\le d,$ the runs of length $\ell_i$ are all moving together around a circular track of some length $n_i<n$, with the gaps between them always staying the same. Generally, the $d$ different groups of runs all move simultaneously around their tracks of slightly different lengths, except that whenever a run of length $\ell_i$ is crossing over the boundary of its track, the runs of each length $\ell_j\ne \ell_i$ all pause on their track for $\min(\ell_i,\ell_j)$ of the crossing steps and then $\min(\ell_i,\ell_j)$ additional steps. Thus, we cannot have two runs from different groups actively crossing at the same time (although sometimes one run will start crossing, pause while another run crosses, and then finish crossing). 

The track length for the track containing the length $\ell_i$ runs will be $$n_i := n - \ell_i - 2\sum_{j\ne i} r_j\cdot \min(\ell_i,\ell_j),$$ and we will refer to the track with the length $\ell_i$ runs as the \emph{\tb{\tcb{$\boldsymbol{i}^{\text{th}}$ track}}}. We will number the $i^{\text{th}}$ track with the numbers $1,2,\dots,n_i$ in order, and we will call position $n_i$ the \emph{\tb{\tcb{boundary}}} of the track. We will describe every tableau $T[n]\in \T(n,\vec{\ell},\vec{r})$ using the following two pieces of data:
\begin{enumerate}
    \item A list of $d$ \emph{\tb{\tcb{positions}}} $(p_1,\dots,p_d)$ such that $p_i\in\{1,2,\dots,n_i\}$ denotes where one particular representative run on the $i^\text{th}$ track starts.
    \item A list of $d$ \emph{\tb{\tcb{gap sequences}}} $(\vec{g}(1),\dots,\vec{g}(d))$, such that $$\vec{g}(i)=(g_1(i),\dots,g_{r_i}(i)),\hspace{1cm}g_1(i)+\dots+g_{r_i}(i) = n_i,\hspace{1cm}g_1(i),\dots,g_{r_i}(i)\ge 2\ell_i,$$ representing the gaps between the starts of consecutive runs on the $i^{\text{th}}$ track.
\end{enumerate}
We will then say that the $j^\text{th}$ run on the $i^\text{th}$ track starts at position $p_i+g_1(i)+\dots+g_{j-1}(i)$ and ends at position $p_i+g_1(i)+\dots+g_{j-1}(i)+\ell_i-1.$ The positions and gap sequences will be uniquely determined from the tableau, up to changing which run within each group is chosen as the representative and cyclically shifting the corresponding gap sequences. 

The positions and gap sequences will also satisfy the additional restriction that if $\ell_i>\ell_j,$ there can never be both a length $\ell_i$ run currently ending at one of the $2\ell_j$ positions positions $n_i-\ell_j+1,\dots,n_i,1,2,\dots,\ell_j$ closest to the boundary of its track and also a length $\ell_j$ run currently ending at one of the $2\ell_j$ positions $n_j-\ell_j+1,\dots,n_j,1,2,\dots,\ell_j$ closest to the boundary of its track. We can visualize this by imagining that the tracks all share a common boundary, and we can think of the space close to this boundary as a ``bottleneck" through which at most one run can be passing at a time. In terms of promotion, a run will be passing through the bottleneck when some of its entries are being promoted into the top row, and then immediately afterwards.

We can now state the main result of \S\ref{sec:2row}:

\begin{theorem}\label{thm:2row}
     As long as $n_i\ge 2r_i\ell_i$ for every $i$, the set $\T(n,\vec{\ell},\vec{r})$ is closed under promotion, and within each orbit, we can choose the gap sequences to always stay the same while the positions rotate. Moreover, every orbit length divides the number $P_d(n,\vec{\ell},\vec{r})$ of valid position sequences given a list of gap sequences, which is independent of the choice of gap sequences.
\end{theorem}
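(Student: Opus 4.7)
The plan is to translate promotion on $\T(n,\vec{\ell},\vec{r})$ into an explicit dynamical system on (position tuple, gap sequences) data. First, I would extend Lemma~\ref{lem:promoted_entry} and the single-length argument from Theorem~\ref{thm:2row_linear} to multiple lengths, using the rainbow/pairing construction of \S\ref{subsec:run_lengths} to show that at any promotion step, at most one run is actively crossing the boundary of its track. When the smallest run has length $\ell_i$ and its entries become $\ell_i+1,\ldots,2\ell_i$, it begins a \emph{crossing phase} of $\ell_i$ steps during which its entries are sequentially promoted into the top row and reappear as $n-\ell_i+1,\ldots,n$, exactly as in the proof of Theorem~\ref{thm:2row_linear}. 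The bottleneck condition is precisely what prevents two runs from entering overlapping crossing phases.

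Next, I would describe the induced motion. In non-crossing steps, all entries of $T$ decrement by 1, so each $p_i$ decrements by 1 on its track $\{1,\ldots,n_i\}$ and all gap sequences are preserved. During a crossing of a length-$\ell_i$ run, its position on the $i$-th track continues to decrement (with wraparound at $n_i$), while each run on the $j$-th track with $j\neq i$ pauses for a total of $2\min(\ell_i,\ell_j)$ associated promotion steps (spread across the entry and exit of the bottleneck region). Summing pauses across all $r_i$ crossings of length-$\ell_i$ runs during one full orbit recovers the correction $-2\sum_{j\neq i} r_j\min(\ell_i,\ell_j)$ hidden in the definition of $n_i$. A careful bookkeeping argument then shows that, with an appropriate initial choice of representative run on each track, the gap sequences remain constant throughout the orbit while the positions rotate cyclically, giving closure of $\T(n,\vec{\ell},\vec{r})$ under promotion.

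Finally, with the gap sequences fixed, the valid position tuples $(p_1,\ldots,p_d)$ are precisely those satisfying the bottleneck condition. I would verify that their count $P_d(n,\vec{\ell},\vec{r})$ depends only on $n,\vec{\ell},\vec{r}$ by a direct combinatorial argument: the bottleneck regions have sizes $2\min(\ell_i,\ell_j)$ intrinsic to the track lengths, so an inclusion-exclusion count gives a formula that does not reference the internal gap data. Since the promotion-induced map on position tuples is an invertible cyclic shift, every orbit length divides $P_d$, with proper divisors appearing when the gap sequences carry extra cyclic symmetry (analogous to the remark following the proof of Theorem~\ref{thm:generic}).

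The main obstacle will be the detailed crossing analysis: verifying that during a crossing by a length-$\ell_i$ run the other tracks' positions are frozen for exactly the correct $2\min(\ell_i,\ell_j)$-step window, that the bottleneck condition is preserved through a crossing (not merely in the uneventful decrementing steps), and that the rotational structure on position space is genuinely uniform across choices of gap sequences so that $P_d$ really is gap-sequence-invariant.
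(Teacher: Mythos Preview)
Your outline matches the paper's architecture---closure, gap-preservation, track rotation with pauses, then divisibility---and your description of the crossing dynamics is close (the paper's Lemma~\ref{lem:tracks_rotating} has the $j$-th track pause for $2\min(\ell_i,\ell_j)$ \emph{consecutive} steps: the last $\min(\ell_i,\ell_j)$ dots crossing plus the next $\min(\ell_i,\ell_j)$ steps, not split between entry and exit). Your inclusion--exclusion sketch for gap-independence of $P_d$ is also in line with what becomes Proposition~\ref{prop:recursion}.

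There is, however, a real gap in your divisibility step. The assertion that ``the promotion-induced map on position tuples is an invertible cyclic shift, so every orbit length divides $P_d$'' does not hold up: the induced map is invertible, but it is \emph{not} a cyclic shift on $\prod_i \mathbb{Z}/n_i\mathbb{Z}$ or on any obvious group---different coordinates pause at different times depending on the current position, so the dynamics are not translation-invariant in any naive sense, and there is no a priori reason orbits should have equal length. The paper's Lemma~\ref{lem:divisor_of_num_tableaux} supplies the missing idea: compare an orbit $\mathcal{O}$ to an orbit $\mathcal{O}'$ obtained by shifting a single track by one position, and argue that the offset between corresponding tracks in $\mathcal{O}$ and $\mathcal{O}'$ stays constant except during brief crossing windows that self-correct, so $|\mathcal{O}|=|\mathcal{O}'|$. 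Iterating this shows all orbits in the fixed-gap-sequence subset have equal length, whence each divides $P_d$. Your remark that proper divisors come only from gap-sequence symmetry is also incomplete: as the paper notes after the proof, divisors can also arise from $\gcd(\ell_i-\ell_j,n_i,n_j)>1$, which constrains the relative offset of two tracks modulo that gcd.

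A second place you are underestimating the work is closure under promotion when runs of different lengths overlap. The single-length argument from Theorem~\ref{thm:2row_linear} does not extend cleanly, because when a length-$\ell_j$ rainbow sits nested inside a length-$\ell_i$ rainbow that is crossing the boundary, the arc diagram is redrawn and dots \emph{change run membership} (see the paper's Example after Lemma~\ref{lem:divisor_of_num_tableaux}). The paper's Lemma~\ref{lem:closed_under_promotion} handles this by an induction on the number of nested rainbows, showing that the multiset of run lengths is preserved even when the arc diagram changes; your proposal does not yet account for this phenomenon.
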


Before giving the proof of Theorem \ref{thm:2row} in \S\ref{subsec:2row_proof}, we will need to explain how exactly to determine the positions and gaps from a given tableau and vice versa, which we will do in \S\ref{subsec:tableau_to_tracks} and \S\ref{subsec:tracks_to_tableau}

\subsection{Mapping a tableau to a set of tracks} \label{subsec:tableau_to_tracks}

Given a tableau $T[n]$, we will first draw the arc diagram described in \S\ref{subsec:run_lengths}. Then for each $1\le i \le d,$ we can find its $i^\text{th}$ position number and $i^\text{th}$ gap sequence as follows:
\begin{enumerate}
    \item Assign the track numbers $1,2,\dots,n_i$ to the positions on the number line in the order $1,2,\dots,n$, except that for each run of a different length $\ell_j\ne\ell_j,$ skip over the middle $2\min(\ell_i,\ell_j)$ numbers in the rainbow corresponding to that run. Additionally, skip over the first $\ell_i$ numbers that are not already being skipped and are not dots.
    
    To see that the number of assigned track positions equals the track length $n_i$, note that for each $j\ne i,$ we skip $2r_j\min(\ell_i,\ell_j)$ total numbers, plus we skip an additional $\ell_i$ numbers at the start, so the number of track numbers assigned to the positions on the number line is $n-\ell_i-2\sum_{j\ne i}r_j\min(\ell_i,\ell_j)=n_i$. Thus, we will not run out of track numbers or have any track numbers leftover. The condition $n_i\ge2r_i\ell_i$ ensures that there is enough space on the track to fully fit all $r_i$ rainbows of size $2\ell_i$ corresponding to the $r_i$ runs of length $\ell_i.$
    \item We can now take the position $p_i$ to be the starting track position for any one of the $r_i$ runs of length $\ell_i$ (which we designate as the first run). Then we number the length $\ell_i$ runs as the first, second, $\dots,r_i^{\text{th}}$ runs in order, starting from the second run, wrapping around to the start of the track once we reach the end. Then $g_j(i)$ is the mod $n_i$ residue of the starting position of the $(i+1)^{\text{st}}$ run minus the starting position of the $i^{\text{th}}$ run.   

    To see that this gives a valid gap sequence, note that we will automatically have $g_1(i)+\dots+g_{r_i}(i)=n_i,$ since after adding all $r_i$ gaps, we will be back to the starting track position. To see that $g_j(i)\ge 2\ell_i$ for every $j=1,2,\dots,r_i,$ note that the rainbows corresponding to different runs can never overlap, since the arcs are noncrossing, and they also cannot be nested inside each other, since if a size $2\ell_i$ rainbow were nested inside of another size $2\ell_i$ rainbow, then the initial center part of the outer rainbow would have to be a rainbow of size less than $2\ell_i,$ so that smaller rainbow would have been removed and counted as its own run instead of the inner size $2\ell_i$ rainbow being removed and counted as a separate run. Since each rainbow takes up $2\ell_i$ numbers along the track, and each rainbow starts with a dot, the gaps between the starts of adjacent rainbows must be at least $2\ell_i$.
\end{enumerate}
Note also that by the above argument, we can only ever have rainbows corresponding to smaller runs nested inside of rainbows corresponding to larger runs, and never larger runs nested inside of smaller runs. On the $i^{\text{th}}$ track, for every run of shorter length $\ell_j<\ell_i$, the entire rainbow corresponding to that run gets removed, so there will not be anything nested inside the rainbows corresponding to length $\ell_i$ runs. Note also that when a run of length $\ell_i$ crosses the boundary, for any of its arcs with both endpoints after the boundary, the left endpoints of those arcs are among the extra $\ell_i$ numbers that get skipped at the beginning, so only the right endpoints (the dots) gets included. Those dots thus end up immediately after the dots of the run that come to the left of the boundary. Thus, each run of length $\ell_i$ actually corresponds to $\ell_i$ \emph{consecutive} numbers on the $i^{\text{th}}$ track.


We also need to check that the resulting position and gap sequences satisfy the bottleneck condition:

\begin{prop}\label{prop:bottleneck_issues}
    The resulting position and gap sequence will never have two runs of lengths $\ell_j<\ell_i$ such that both the last dot of the length $\ell_j$ run on its track is at one of $n_j-\ell_j+1,\dots,n_j,1,2,\dots,\ell_j$, and also the last dot of the length $\ell_i$ run on its track is at one of $n_i-\ell_j+1,\dots,n_i,1,2,\dots,\ell_j.$
\end{prop}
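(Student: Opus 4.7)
The plan is a proof by contradiction: I would show that each bottleneck condition forces the corresponding run to have a wraparound rainbow (i.e., its rainbow in the arc diagram straddles the wraparound point), and then argue that at most one such wraparound rainbow can exist in any given arc diagram.

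First, I would show that a length $\ell_i$ run being at the bottleneck of track $i$ forces its rainbow to straddle the wraparound point in the arc diagram. The key observation is that on track $i$, the initial $\ell_i$ additional skipped positions are precisely the paired spaces of the length-$\ell_i$ wraparound run (should one exist), so the boundary between track position $n_i$ and track position $1$ corresponds to the wraparound point of the arc diagram. If the last dot of the run is in the back part $\{n_i-\ell_j+1,\ldots,n_i\}$ of the bottleneck, then since $\ell_i>\ell_j$, the paired spaces of the run's rainbow must wrap around to the first few track positions, making the rainbow a wraparound rainbow. If the last dot is in the front part $\{1,\ldots,\ell_j\}$ of the bottleneck, the run itself wraps on the track, again forcing it to correspond to a wraparound rainbow.

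Second, I would argue that at most one run in the arc diagram can have a wraparound rainbow. All wrapping arcs arise from step 4 of the pairing procedure, which pairs the largest unpaired dot in $\{k,\ldots,n\}$ with the smallest unpaired space in $\{1,\ldots,j\}$, then the second-largest dot with the second-smallest space, and so on. By monotonicity of this pairing rule, the resulting wrapping arcs are nested one inside the other, forming a single linear nested sequence. Consequently, all these wrapping arcs end up on a single run's rainbow (possibly extended inward by non-wrapping step-3 arcs), and every other run in the arc diagram consists entirely of non-wrapping arcs on side branches of the nesting tree. Combining with the first step gives the contradiction: two runs of distinct lengths $\ell_j<\ell_i$ at their respective bottlenecks would both need to be wraparound runs, but at most one wraparound run exists.

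The main obstacle is in the first step, where I must carefully translate the bottleneck condition on a track to a wraparound rainbow in the arc diagram. This relies on identifying the initial $\ell_i$ additional skipped positions on track $i$ as the paired spaces of a length-$\ell_i$ wraparound run, which follows from the facts that paired spaces of any run are never in the interior of another run's rainbow (hence not skipped by the middle-of-rainbow skipping) and that spaces in $\{1,\ldots,j\}$ are all paired with the unique wraparound chain (using the structure of steps 3 and 4 of the pairing procedure). Once the first step is established, the second step follows cleanly from the monotonicity of the pairing rule.
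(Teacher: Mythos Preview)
Your step 2 claim---that at most one run can have a wraparound rainbow---is false, and this is where the argument breaks. The monotone nesting of step-4 arcs does not prevent them from being distributed across multiple runs once the adjacent-rainbow removal procedure is applied. Concretely: take dots at $n-3,n-2,n$ and spaces at $n-1,1,2$, with arcs $n-2\to n-1$ (step 1) and $n\to 1$, $n-3\to 2$ (step 4). The maximal rainbows $(n-2,n-1)$ and $(n,1)$ are adjacent and tied in size; if one removes $(n,1)$, then the length-$1$ run has rainbow $(n,1)$ and the length-$2$ run has rainbow $(n-3,n-2,n-1,2)$, and \emph{both} cross the boundary. So ``all wrapping arcs end up on a single run's rainbow'' fails. (There is also a smaller gap in step 1: if the shorter run's last dot sits at track position exactly $\ell_j$, the run occupies positions $1,\dots,\ell_j$ without wrapping, and its rainbow can start precisely at the boundary without crossing it; the paper handles this ``starts immediately to the right of the boundary'' case separately.)

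The paper's proof sidesteps uniqueness entirely. It shows that both bottleneck rainbows must straddle (or, for the shorter one, abut) the boundary and hence be nested, and then exploits the removal rule directly: the portion of the larger rainbow that originally sat adjacent to the smaller one must itself have size at least $2\ell_j$, which forces the last dot of the longer run at least $\ell_j$ positions away from the boundary on its track, contradicting the bottleneck hypothesis. That positional argument using the adjacency/size comparison in the removal step is the missing ingredient in your approach.
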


\begin{proof}
    Assume for contradiction that two such runs exist. For the shorter run, its rainbow on the original track either crosses the boundary or starts immediately to the right of the boundary. If its last dot on its track is at one of $n_j-\ell_j+1,\dots,n_j,$ then its rainbow crosses the boundary, because there are $\ell_j$ numbers in the rainbow after the last dot but fewer than $\ell_j$ spaces between the dot and the boundary. Similarly, if its last dot is at one of $1,2,\dots,\ell_j$, then there are $\ell_j-1$ dots before it in the rainbow, so either the rainbow starts at position 1 on the track, or it starts to the left of that and crosses the boundary.
    
    For the longer run, the rainbow necessarily crosses the boundary. If the last dot in the run is at one of $n_i-\ell_j+1,\dots,n_i,$ there are still $\ell_i>\ell_j$ spaces of the rainbow to its right, so the rightmost part of the rainbow is on the other side of the boundary. Similarly, if the last dot is at one of $1,2,\dots,\ell_j,$ there are $\ell_i-1\ge \ell_j$ dots of the rainbow to its left, so again the rainbow crosses the boundary. 

    Since they both cross the boundary (or the smaller one starts immediately after the boundary), the smaller rainbow must be nested inside the larger one. That means that before the smaller rainbow is removed, a subset of the larger rainbow must initially form a rainbow that sits adjacent to the smaller one and is at least as large as it, so at least size $2\ell_j$. But if that rainbow sits to the left of the size $2\ell_j$ rainbow, its last dot (which is also the last dot of the larger size $2\ell_i$ rainbow) will be at position $n_i-\ell_j$ or earlier, since there are at least $\ell_j$ spaces of the rainbow after it and before the size $2\ell_j$ rainbow starts. Then the last dot would not be within $\ell_j$ positions of the boundary, a contradiction. Similarly, if the rainbow sits to the right of the size $2\ell_j$ rainbow, its last dot will be at least at position $\ell_j+1$, since the size $2\ell_j$ rainbow ends to the right of the boundary, at position 1 or later. Then again the last dot would not be within $\ell_j$ steps of the boundary, so we have the desired contradiction, and no bottleneck issue can occur.
\end{proof}

Now we give an example to illustrate this mapping:

\begin{example}\label{ex:finding_tracks}
    Returning to Example \ref{ex:finding_runs}, the red and green runs $(9,12,13)$ and $(17,20,2)$ of length 3 will both belong to the first track, so we will now color them both green, and similarly, the blue and orange runs $(10)$ and $(18)$ of length 1 will both belong to the second track, so we will color them both orange. Since $n=20,$ $\ell_1=3,\ell_2=1,$ and $r_1=r_2=2,$ the first track will have length $$n_1 = n-\ell_1-2r_2\min(\ell_1,\ell_2) = 20-3-2\cdot2\cdot1 = 13,$$ while the second track will have length $$n_2 = n-\ell_2 - 2r_1\min(\ell_1,\ell_2) = 20-1-2\cdot2\cdot1=15.$$ In the figure below, the top row gives the numbering from the original number line, the middle row gives the numbering for the first track, and the bottom row gives the numbering for the second track:
    \begin{center}
        \includegraphics[width=15cm]{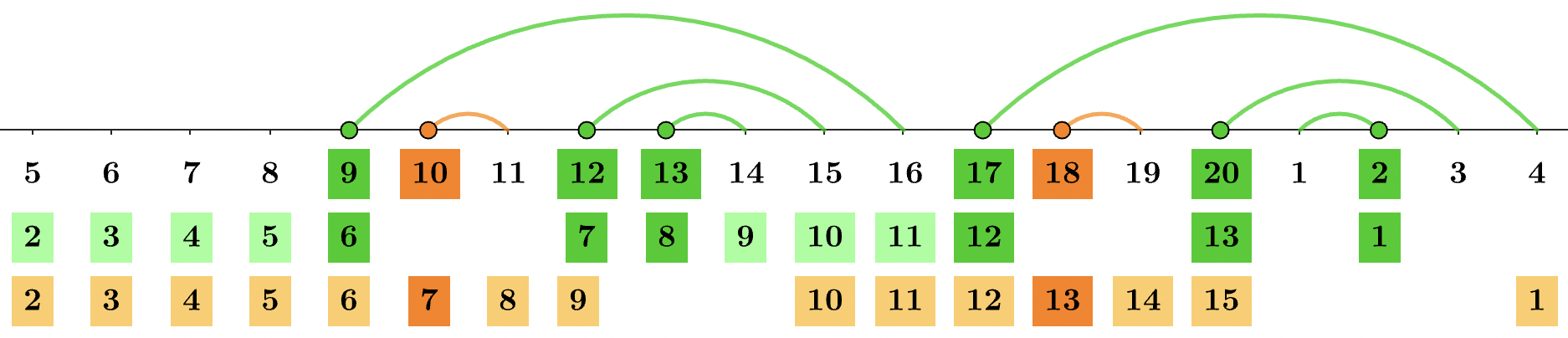}
    \end{center}
    For the first track, we skip over all rainbows for the second track, and we also skip the first 3 other numbers that are not dots, which are 1, 3, and 4. For the second track, we skip over the center 2 numbers of each green rainbow, $(13,14)$ and $(1,2)$, and we also skip the first extra number, 3. 
    
    We can now visualize the two tracks separately as shown below:
    \begin{center}
        \includegraphics[width=9.75cm]{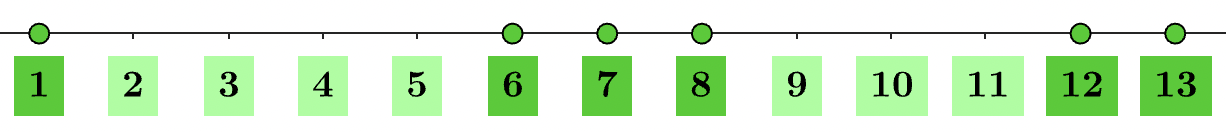}
        \includegraphics[width=11.25cm]{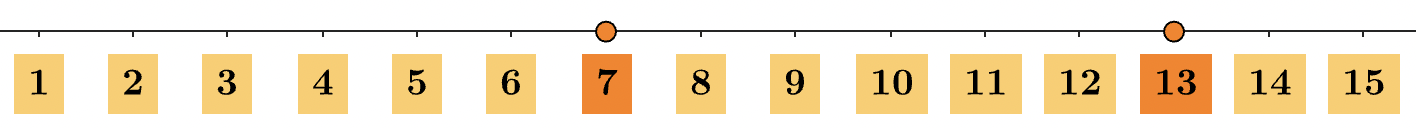}
    \end{center}
    Note that the runs of dots are consecutive on each track, provided we imagine the first track as being circular with the 13 wrapping around to come before the 1. 
    
    For the first track, we could choose the position as either $p_1=6$ or $p_1=12.$ If we choose $p_1=6,$ we get the gap sequence $\vec{g}(1) = (6,7),$ since $12-6=6$ and $6-12=-6\equiv 7\pmod{13}.$ If we choose $p_1=12,$ we get the cyclically shifted gap sequence $\vec{g}(1)=(7,6)$, since the 7 becomes the first gap and the 6 the second gap.

    For the second track, we could choose either $p_2=7$ or $p_2=13.$ If we choose $p_2=7,$ then we get the gap sequence $\vec{g}(2) = (6,8),$ since $13-7=6,$ and $7-13=-6\equiv 8\pmod{14}$. Thus, if we choose $p_2=13,$ we get the cyclically shifted gap sequence $\vec{g}(2)=(8,6).$
\end{example}

\subsection{Recovering a tableau from its set of tracks}\label{subsec:tracks_to_tableau}

To show that each list of positions and gap sequences corresponds to a unique tableau, we will describe how to reverse the above process and recover a tableau $T[n]$ given its positions and gap sequences:
\begin{enumerate}
    \item First use the positions and gap sequences to build the $d$ individual tracks.
    \item Add an extra $\ell_i$ spaces to the beginning of the $i^{\text{th}}$ track, except that if there are $k\le \ell_i$ dots at the very beginning of the track, add the first $k$ spaces before those dots, and the rest after the $k$ dots.
    \item We then add an extra $2\sum_{j\ne i}r_j\min(\ell_i,\ell_j)$ spaces to the $i^{\text{th}}$ track for every $i$ so that all tracks have length $n$. To do this, we go through the numbers $1,2,\dots,n$ in order. Whenever we see that a number $k$ is currently the end of a run on the $i^{\text{th}}$ track for some $i$, we insert $2\min(\ell_i,\ell_j)$ extra spaces into the $j^{\text{th}}$ track for each $j\ne i$, immediately after position $k-\min(\ell_i,\ell_j)$, i.e. we insert the numbers $k-\min(\ell_i,\ell_j)+1,\dots,k+\min(\ell_i,\ell_j)$ into the track, and we add $2\min(\ell_i,\ell_j)$ to all existing numbers on the track that are $k-\min(\ell_i,\ell_j)+1$ or larger, maintaining whether they are dots or spaces. The positions of the inserted spaces thus correspond to the positions on the $i^{\text{th}}$ track of the centermost part of the rainbow involving $k$. If there are multiple runs ending at $k$, we can choose any one of them to use first.
    
    The exception is that if the rainbow involving $k$ crosses the boundary and $k$ is before the boundary within the rainbow, and if $\min(\ell_i,\ell_j)>n-k,$ for any $j\ne i,$ the final $\min(\ell_i,\ell_j)-(n-k)$ extra spaces get added to the beginning of the $j^{\text{th}}$ track instead of to the end. In that case, we will add those extra spaces to the starts of the tracks at the \emph{beginning} of the process instead of at the end, starting with whichever of the runs in rainbows crossing the track boundaries would end first if we imagined lining up all their boundaries, and breaking ties arbitrarily. Similarly, if the rainbow involving $k$ crosses the boundary and $k$ is after the boundary, then for each $j\ne i$ with $\min(\ell_i,\ell_j)<k,$ the first $k-\min(\ell_i,\ell_j)$ extra spaces get added to the end of the $j^{\text{th}}$ track instead of the beginning.

    \item Finally, we read off the entries in $T$ by merging all the tracks together and taking $k$ to be an entry of $T$ if and only if it is a dot on any track. 
\end{enumerate}

\begin{example}\label{ex:merging_tracks}
    Starting with the two tracks from Example \ref{ex:finding_runs}, we can draw both individual tracks and then follow the above process to recover the tableau $T[20]$. The colored numbers throughout will be the ones that were on the tracks originally. First we insert an extra $\ell_1=3$ spaces into the first track, with one of them before the first dot, and the other two after, and we insert an extra $\ell_2=1$ space into the second track at the beginning:
    \begin{center}
        \includegraphics[width=12cm]{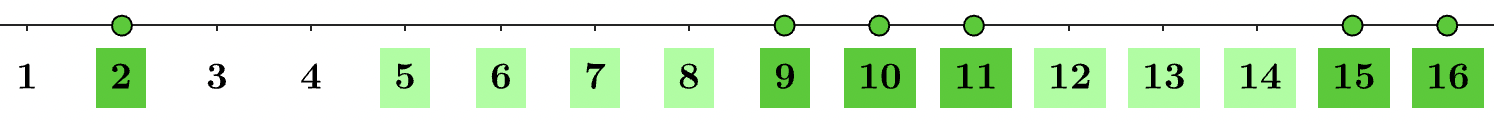}
        \includegraphics[width=12cm]{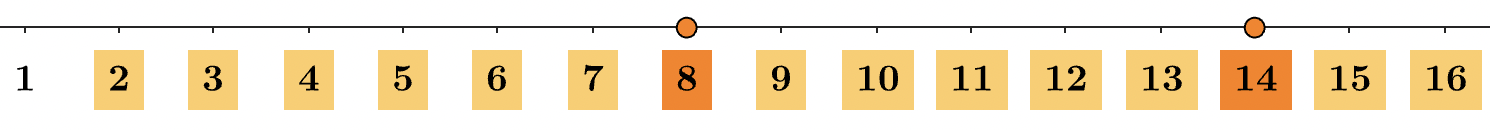}
    \end{center}
    The first end of a run we encounter is the 2 on the first track, so we insert $2\min(\ell_1,\ell_2)=2$ extra spaces in the second track at the positions $(2,3)$:
    \begin{center}
        \includegraphics[width=13.5cm]{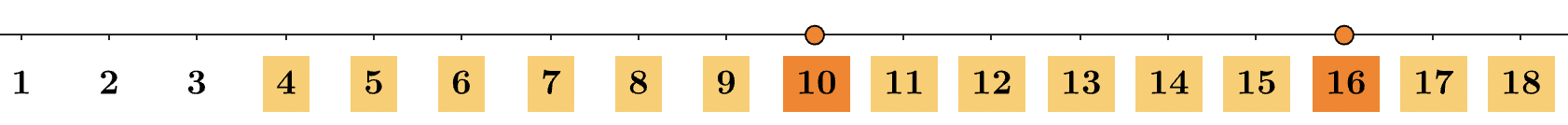}
    \end{center}
    The next end of a run we encounter is the 10 on the second track, so we insert two spaces into the first track at positions $(10,11)$:
    \begin{center}
        \includegraphics[width=13.5cm]{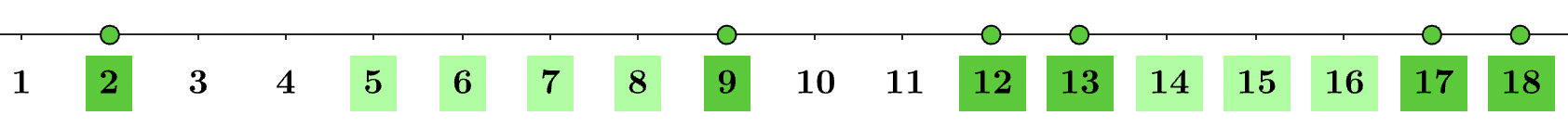}
    \end{center}
    The next end of a run is the 13 on the first track, so we insert two spaces into the second track at positions $(13,14)$:
    \begin{center}
        \includegraphics[width=15cm]{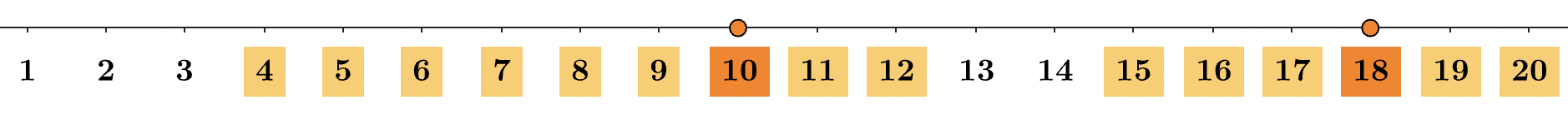}
    \end{center}
    Finally, the last end of a run is the 18 on the second track, so we insert two spaces into the first track at positions $(18,19)$:
    \begin{center}
        \includegraphics[width=15cm]{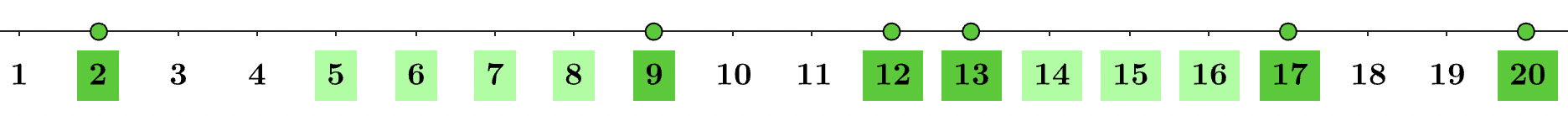}
    \end{center}
    We can now see that merging these tracks gives us back our original number line from Examples \ref{ex:finding_runs} and \ref{ex:finding_tracks}, and that the dots are now the entries of $T$.
\end{example}

\begin{prop}\label{prop:inverses}
    The process described above always results in a valid SYT, and is the inverse of our process for turning tableaux into positions and gap sequences.
\end{prop}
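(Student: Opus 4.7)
The plan is to verify both claims of Proposition \ref{prop:inverses} simultaneously by matching each insertion step of the reverse construction with the corresponding deletion step of the forward construction in \S\ref{subsec:tableau_to_tracks}. I would split this into three pieces: well-definedness of the reverse map, the inverse property, and the SYT condition.

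First I would check that the reverse construction is unambiguous. The only choices appear when (i) multiple runs end at the same merged number, and (ii) a rainbow crosses a track boundary so that its ``center'' spaces must be split between the end and the beginning of the other tracks. For (i), observe that inserting $2\min(\ell_i,\ell_j)$ spaces on track $j$ immediately after position $k-\min(\ell_i,\ell_j)$ only shifts coordinates strictly past that position, so two such simultaneous insertions on different tracks commute. For (ii), Proposition \ref{prop:bottleneck_issues} guarantees that at each moment at most one run per pair of tracks is within $\ell_j$ of a boundary, so the wraparound prescription in step~3 of the reverse map is applied to one rainbow at a time without conflict, and the tie-breaking rule (process the runs whose rainbows end first in wraparound order) fully determines the construction.

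Next I would show that forward followed by reverse returns the original number line. The forward map builds track $i$ by deleting from $\{1,\dots,n\}$ exactly the first $\ell_i$ non-dot positions and the middle $2\min(\ell_i,\ell_j)$ positions of every rainbow coming from a length $\ell_j\ne\ell_i$ run. The reverse map, applied to the resulting positions and gap sequences, reinserts precisely these positions, and the boundary-wraparound rule in step~3 takes care of rainbows that straddle the end of a track so that the reinserted positions land in the correct circular slots. Hence the merged number line coincides with the original one, dot-for-dot, and the recovered tableau is $T[n]$. For the other composition, starting from a valid list of positions and gap sequences, the reverse map produces tracks on which the dots form $r_i$ blocks of $\ell_i$ consecutive track positions separated by exactly the prescribed gaps $g_k(i)$; reading off positions and gaps via the forward map then returns the original data, up to the arbitrary choice of representative run that was already acknowledged in \S\ref{subsec:2row_setup}.

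Finally I would verify that the reconstructed arrangement is a standard Young tableau. Since the first row of $T[n]$ is longer than the second, it suffices to check that for every $1\le k\le n$, the number of dots among $\{1,\dots,k\}$ is at most $\lfloor k/2\rfloor$. Each rainbow of size $2\ell_i$ contributes $\ell_i$ dots and $\ell_i$ spaces with noncrossing arcs, so within the rainbow every prefix has at least as many spaces as dots. The extra $\ell_i$ initial spaces placed at the start of track $i$, together with the boundary-wraparound rule that deposits leftover spaces at the very beginning of the merged line, supply the cushion needed so that every prefix of the final line has nonnegative space-minus-dot surplus. The main obstacle I anticipate is organizing the boundary case cleanly: when several runs are simultaneously in wraparound configuration, keeping track of how the various initial spaces interleave can become fiddly. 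I expect the cleanest presentation is to induct on the number of runs whose rainbows currently cross a boundary, peeling off the run that would close first and invoking the bottleneck condition to ensure the remaining data still satisfies the hypotheses.
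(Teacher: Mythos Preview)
Your outline identifies roughly the right three pieces, but two of them have genuine gaps.

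For the SYT condition, your key claim is false as stated: in a size-$2\ell$ rainbow the $\ell$ dots come \emph{first} and the $\ell$ spaces come \emph{after} (dots are paired with spaces to their right), so a prefix of the rainbow has \emph{more} dots than spaces, not fewer. The balance condition holds not because of what happens inside a single rainbow but because of what lies to its left. The paper's argument handles this by inserting the tracks one at a time from longest runs to shortest: the first track alone satisfies the condition because of the $\ell_1$ initial spaces and the $\ge 2\ell_1$ gap requirement, and then each shorter run of length $\ell_j$ is inserted as a full block of $\ell_j$ dots followed by $\ell_j$ spaces, so dots to the right of the insertion see equal numbers of new dots and spaces, while the newly inserted dots inherit the balance from whatever dot positions they displace or from the $\ell_j$ spaces guaranteed to precede them. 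Your ``cushion'' sentence gestures at this but does not supply the inductive mechanism, and the incorrect rainbow-prefix claim would derail any attempt to make it precise.

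For the inverse property, you assert that ``the reverse map reinserts precisely these positions'' without justification, and this is where the real work lies. The nontrivial check is that when the merging process nests a size-$2\ell_j$ rainbow inside a larger size-$2\ell_i$ rainbow, the forward arc-diagram procedure will remove the \emph{correct} inner rainbow: one must show that the adjacent sub-rainbow formed from the outer run has size at least $2\ell_j$, so that the size-$2\ell_j$ rainbow is genuinely the smallest and is the one peeled off. The paper does this by a case analysis on whether the shorter run ends before or after the longer run's last dot during merging. Your sketch also omits the check that after merging no two tracks place a dot at the same position, which is needed before one can even speak of ``the'' resulting tableau.
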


\begin{proof}
    We will break this into several lemmas for things we need to check.

    \begin{lemma}
        Every end of a run on any track gets seen exactly once in the merging process.
    \end{lemma}
     
    This will mean that the resulting tracks all actually have length $n$, since we will add $\ell_i$ spaces to the $i^{\text{th}}$ track initially, and an additional $2\min(\ell_i,\ell_j)$ spaces for each of the $r_j$ tracks of each length $i\ne j$. 
    
    \begin{proof}
        If an end of a run has not yet been seen and we encounter some other end of a run first, we will insert spaces ahead of the not-yet-seen run end, so it will only move further to the right on its track and will still be seen in the future. Thus, every run end gets seen at least once. 
    
    For some end of a run of length $\ell_i$ to get seen twice, we would need to first visit it once, and then have it get moved far enough ahead when inserting spaces into its track that we also visit it a second time. When it gets visited the first time, any run end to its right on the $j^{\text{th}}$ track gets moved ahead by $2\min(\ell_i,\ell_j)$ spaces, so it will end up at least $2\min(\ell_i,\ell_j)$ spaces to the right of the length $\ell_i$ run end. Then when the length $\ell_j$ run end gets visited, earlier numbers on the $i^{\text{th}}$ track can only get shifted to the right if they are within $\min(\ell_i,\ell_j)$ positions of the length $\ell_j$ run end, which the length $\ell_i$ run end is not. In fact, an already visited end of a run will never again get shifted further to the right, so it will never get visited a second time.
    \end{proof}

    \begin{lemma}
        After inserting the extra spaces, there cannot be a dot at the same position on two tracks.
    \end{lemma}

    \begin{proof}
     Suppose for contradiction that a dot belongs to runs on both the $i^{\text{th}}$ and $j^{\text{th}}$ tracks, with $\ell_i > \ell_j.$ But then if the end of the length $\ell_j$ run got visited before the end of the length $\ell_i$ run, all dots of the length $\ell_i$ run that overlap with a dot of the length $\ell_j$ run would have been shifted $2\ell_j$ spaces to the right, so they would no longer overlap. Similarly, if the end of the length $\ell_i$ run got visited first, then the final $\ell_j$ dots of that run (which must include the overlap dot, or else the length $\ell_j$ run would have ended first), will all get shifted $2\ell_j$ spaces to the right, so the dots will no longer overlap. The only way the dots in either run can be impacted later in the process would be if the dots that were moved to the right could get moved even further to the right if another run on some other track ends before their run, but this would not make them overlap with the run of dots further to the left. 
    
    Another potential issue is if shifting the dots at the very beginning of the track to be at the other endpoints of their arcs results in those dots ending up on top of other dots. However, this cannot happen, because our process results not only in non-overlapping dots but also non-overlapping arcs, because whenever we have a smaller rainbow of size $2\ell_j$ and a larger rainbow of size $2\ell_i,$ either the smaller rainbow will be entirely before or after the larger rainbow when the extra spaces corresponding to it are inserted into the larger track, or else we will insert spaces to the larger track corresponding to the entirety of the smaller rainbow, so that it becomes fully nested within the larger rainbow and none of the arcs overlap. This still holds near the boundary of the track since the insertion process there is essentially the same. Thus, since moving a dot to the other endpoint of its arc moves it to a new position that is still within its own rainbow, the dot cannot end up on top of another dot. Thus, all the dots on the final tracks are at different positions from each other, so they will all correspond to different entries of $T$.
    \end{proof}

    \begin{lemma}
        The resulting tableau $T[n]$ is a valid standard Young tableau.
    \end{lemma}

    \begin{proof}
        To check that $T[n]$ is a valid SYT, we need the $k^{\text{th}}$ entry of $T$ to be larger than the $k^{\text{th}}$ entry of the top row for every $k$, which is equivalent to the $k^{\text{th}}$ dot always coming after the $k^{\text{th}}$ space. To show that, we can imagine starting with the first track with just its extra $\ell_1$ spaces at the beginning added, and then inserting the runs from the second track, and then the runs from the third track, and so on. 

        We first claim that the condition holds when we just have the first track together with its extra $\ell_1$ spaces added at the beginning, with no other spaces added yet. If there are $0\le k\le \ell_1$ dots at the very start of the track, the statement holds for those dots, because we insert $k$ spaces ahead of them. If there are no dots at the very beginning of the track, the statement also holds for the leftmost run of $\ell_1$ dots, because all $\ell_1$ extra spaces get inserted ahead of those dots. Then we can inductively show that the statement holds for each subsequent run of $\ell_1$ dots on the track because of the assumption that the gaps between the starts of the runs are at least $2\ell_1$. That assumption implies there will always be at least $\ell_1$ spaces between the end of the previous run and the start of the new run, which means we can pair each of the $\ell_1$ new dots with a different one of those $\ell_1$ spaces after the previous run. Since each dot is paired with a space before it and no two dots are paired with the same space, there are at least as many spaces as dots in every initial segment of the track.

        Next, we show by induction on $j$ that the statement will still hold when we insert each run of dots from the $j^{\text{th}}$ track into the track formed by merging the first $j-1$ tracks, assuming we insert them from leftmost length $\ell_j$ run to rightmost $\ell_j$ run at whatever position our tracks-to-tableau process would end up inserting them relative to the existing longer runs. Every time we insert a run of $\ell_j$ dots into the merged track, the entire run gets inserted together with $i$ extra spaces after it (i.e. a full rainbow gets inserted), since the new run we are inserting is shorter than all runs currently on the track. Now, if the condition previously held for all dots on the merged track, it will still hold for those dots, because for the dots before where the new run is inserted, nothing to their left changes, while for the dots to the right of where the new rainbow is inserted, they gain the same number of dots as spaces ahead of them ($\ell_j$ dots and $\ell_j$ spaces), so the number of spaces ahead of each dot is still always larger than the number of dots ahead of it, as long as it was before the new size $2\ell_j$ rainbow got inserted.
        
        To see that the condition also holds for the newly inserted dots, note first that it holds if the new dots are inserted before any of the existing dots, whether they belong to a rainbow crossing the boundary or a rainbow after the boundary. This is because we can use the exact same argument we used above for showing that the condition holds for the first track by itself, assuming that for the $i^{\text{th}}$ track, we have already inserted the extra $\ell_j$ spaces at the beginning (or after the rainbow crossing the boundary), and also that we have shifted any dots to the right of the boundary within a rainbow crossing the boundary (if there is one) to be at the right endpoints of their arcs instead of the left endpoints.

        Then for each subsequent run of $\ell_j$ dots that gets inserted, we can assume the condition holds when all length $\ell_j$ runs further to the left of it have already been inserted. If the new run is then inserted fully to the right of some existing rainbow (and not within any rainbow), then the condition will hold, because it holds for the last dot within the rainbow by the inductive hypothesis, and then following that dot there are $\ell_i$ spaces for some $\ell_i\ge \ell_j$ (where $2\ell_i$ is the size of the rainbow), meaning that if we look at the track up to the end of the new run, we have $\ell_j$ additional dots and $\ell_i\ge \ell_j$ additional spaces, thus still at least as many total spaces as dots.

        Finally, suppose the new run of $\ell_j$ dots gets inserted partway through some existing larger rainbow, so that it ends up nested inside the rainbow. If it gets inserted before some of the dots belonging to the rainbow, then there must be at least $\ell_1$ more dots of the rainbow after it, or else the rainbow's run would have ended first and so it would have been inserted before all of those dots. Then the new dots will hold the same positions that $\ell_1$ consecutive dots within the rainbow's run previously held. Thus, since the condition held for those $\ell_1$ dots within the run before the new dots were inserted, it also holds for the new dots, as they occupy the same positions and so have the exact same set of dots and spaces to their left that the previous dots had.

        The other possibility is that the new dots get inserted within an existing rainbow (i.e. before the right endpoints of some of the rainbow's arcs), but after all dots of the rainbow. In that case, our process for editing the $j^{\text{th}}$ track inserts the final $\ell_j$ dots of the rainbow's run together with $\ell_j$ spaces after them into the $j^{\text{th}}$ track. That means there are at least $\ell_j$ spaces after the rightmost dot of the existing longer run but before the start of the new run. Within the left portion of the track up to the last dot of the longer run, there are at least as many spaces as dots by the inductive hypothesis. Then since there are $\ell_j$ more spaces before the new run of $\ell_j$ dots, by the end of the new run there are still at least as many spaces as dots, as needed. 
        
        This covers all cases for inserting a new run, so by induction, the statement will still hold after inserting the dots from all $d$ tracks into the first track, i.e. it will hold for the final merged track.
    \end{proof}

    \begin{lemma}
        Our tableau-to-tracks process is the inverse of our tracks-to-tableau process.
    \end{lemma}

    \begin{proof}
        It suffices to check if we start with the tracks, build the tableau, and then rebuild the tracks, we will end up with the same tracks that we started with. 
        
        One main thing to check is that when our merging process nests a smaller size $2\ell_j$ rainbow inside of a larger size $2\ell_i$ rainbow, the smaller rainbow is actually the one that gets removed, i.e. it is smaller than the other inner rainbow formed by a portion of the larger run. 
        
        If the smaller size $2\ell_j$ rainbow get inserted before some of the dots of the larger rainbow, that means that when merging the tracks, the length $\ell_j$ run ended first. Then the final portion of the longer length $\ell_i$ run that gets shifted $2\ell_j$ positions to the right when inserting the size $2\ell_j$ rainbow starts $\ell_j$ positions before the last dot of the newly inserted run, so it contains at least $\ell_j$ dots. Thus, the nested rainbow formed by that portion of the run has size at least $2\ell_j.$

        If the smaller rainbow gets inserted after all dots of the larger rainbow but before some arcs of the larger rainbow, we need to show that the number of right endpoints of arcs of the larger rainbow that come before the newly inserted run is at least $\ell_j,$ since the number of such arcs determines the size of the portion of the larger rainbow that gets nested inside the outer portion. In that case, the longer run must have ended before the shorter run, so the entire shorter run would have been shifted $2\ell_j$ spots to the right. Since it initially ended after the end of the longer run, after the shifting it ends at least $2\ell_j$ positions after the end of the longer run, which means there are at least $\ell_j$ spaces after the end of the longer run and before the shorter run. Those $\ell_j$ or more spaces then correspond to the right endpoints of the arcs of the portion of the larger rainbow that gets nested inside, so that inner rainbow has size at least $2\ell_j$, as needed.

        Thus, when we do the merging process and then separate the merged track back into individual tracks, we will always recover the original runs of the correct lengths. Also, the numbering of the tracks that we read off from the merged track by skipping over the centers of other rainbows will exactly match the numbering of the original tracks, since in both cases, the original track numbering for the $i^{\text{th}}$ skips over the central $2\min(\ell_i,\ell_j)$ spaces of each other rainbow, and also in both cases an extra $\ell_i$ spaces at the beginning or after the end of the rainbow crossing the boundary get skipped.

        However, the other thing we should be careful about is runs that cross over the boundary. Consider a run of length $\ell_i$ with $k$ dots after the boundary. We may assume that all smaller runs nested inside of it have already been removed. Our tracks-to-tableau process will insert the first $k$ extra spaces before those $k$ dots. Then the arc diagram will connect those $k$ dots to the $k$ spaces preceding them in a rainbow, and it will connect the remaining $\ell_i-k$ dots of the run immediately before the boundary to the next $\ell_i-k$ spaces after the $k$ dots, which are the remaining extra spaces that get inserted. These dots will thus form a single rainbow and so will be counted as a single run of length $\ell_i$, and the dots will all end up being consecutive on the track (since all the spaces within the rainbow will get removed), with exactly $k$ of them to the right of the track boundary. Thus, in this case as well we recover the original run of the correct length and in its original position on the track.
        \end{proof}

    Putting these lemmas together completes the proof of Proposition \ref{prop:inverses}, because we already know that every SYT satisfying the required conditions maps to a unique set of tracks, and we have now shown that every set of tracks also maps to an SYT, and that the SYT we get from merging a set of tracks is always the same as the SYT that maps to those tracks. We thus have a bijection between SYT and tracks, or equivalently, between SYT and position and gap sequences (up to cyclically shifting the gap sequences and adjusting the positions accordingly).
\end{proof}

\subsection{Proof of Theorem \ref{thm:2row}: 2-row orbit lengths}\label{subsec:2row_proof}

We will split the proof into several lemmas. The main things to check are that $\T(n,\vec{\ell},\vec{r}$) is closed under promotion (Lemma \ref{lem:closed_under_promotion}), that the gap sequences stay the same throughout each orbit (Lemma \ref{lem:gaps_constant}), that the tracks rotate in a certain way relative to each other (Lemma \ref{lem:tracks_rotating}) and that the orbit lengths divide $P_d(n,\vec{\ell},\vec{r})$ (Lemma \ref{lem:divisor_of_num_tableaux}).

\begin{lemma}\label{lem:closed_under_promotion}
    $\T(n,\vec{\ell},\vec{r})$ is closed under promotion.
\end{lemma}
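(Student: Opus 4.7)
The plan is to do a single-step analysis based on whether any entry of the bottom row $T$ gets promoted into the top row during one promotion step. If no entry of $T$ is promoted, then the smallest entry of $T$ must be at least $3$, so every entry of $T$ decreases by exactly $1$. On the arc diagram of \S\ref{subsec:run_lengths}, this is a uniform leftward shift by one of every dot with no dot crossing position $1$, so the rainbow decomposition --- and hence the run-length multiset $(\vec\ell,\vec r)$ --- is preserved, and $\mc P(T[n])$ stays in $\mc T(n,\vec\ell,\vec r)$.

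If instead some entry of $T$ is promoted, Lemma \ref{lem:promoted_entry} says it is the value $2k$ at position $k$ of $T$ for the smallest $k$ with $t_k = 2k$. Using the track description of \S\ref{subsec:tableau_to_tracks}, I would identify this dot as the leftmost dot of a run of some length $\ell_i$ whose size-$2\ell_i$ rainbow currently sits at the very front of the number line. After one promotion step, the remaining $\ell_i - 1$ dots of this run shift one to the left, a new dot appears at position $n$, and every other rainbow shifts left by one without crossing any boundary. The wrapping run's rainbow still has $\ell_i$ dots pairing with $\ell_i$ spaces across the boundary exactly as in the wraparound rainbows of Definition \ref{def:one_length}, so one expects the pairing procedure of \S\ref{subsec:run_lengths} applied to $\mc P(T[n])$ to reconstruct the same collection of rainbows, producing the same multiset $(\vec\ell,\vec r)$.

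The main obstacle is justifying this last claim carefully, especially tracking how the rainbow-removal procedure interacts with a wrapping rainbow that may have rainbows of other sizes nearby. The bottleneck condition of Proposition \ref{prop:bottleneck_issues}, together with the hypothesis $n_i \ge 2r_i\ell_i$ and the lower bounds $g_j(i) \ge 2\ell_i$ on the gap sequences, is precisely what prevents two runs of different lengths from simultaneously straddling the boundary in conflicting ways. I would verify the claim by subcasing on the relative lengths and positions of adjacent runs near the boundary: since no two runs of different lengths can both be mid-wrap at the same step, the wrapping rainbow either sits cleanly to one side of, cleanly nests, or is cleanly nested within every other rainbow, so the order in which maximal rainbows are selected and removed in step (1) of the procedure of \S\ref{subsec:run_lengths} agrees with what it was before the promotion step. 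Once this invariance is checked in every subcase, closure of $\mc T(n,\vec\ell,\vec r)$ under promotion follows.
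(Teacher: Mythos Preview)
Your outline has the right two-case split and correctly identifies the hard case, but it mischaracterizes what actually happens there and omits the key technical device the paper uses.

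First, two inaccuracies. The dot that gets promoted is not ``the leftmost dot of a run'': it is the dot whose arc has its other endpoint at position $1$, which in the unnested case is the \emph{rightmost} dot of the run (the $2\ell_i$, as in the $\T(n,\ell,r)$ analysis). More seriously, your assertion that ``no two runs of different lengths can both be mid-wrap at the same step'' is false --- nested rainbows of different sizes do wrap simultaneously (this is exactly what the worked example in the paper illustrates), and the bottleneck condition of Proposition~\ref{prop:bottleneck_issues} does not prevent it. That proposition is a property of the tableau-to-tracks map, not a hypothesis you can invoke here; the paper's proof of this lemma works entirely at the arc-diagram level and does not use tracks at all.

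Second, the missing idea. The paper distinguishes between steps where the arc diagram merely \emph{rotates} one position left (with a dot possibly hopping from one arc endpoint to the other when its partner hits position $1$), versus the step where a rainbow first reaches the boundary and the arc diagram genuinely \emph{changes shape} (the arcs get redrawn with dots now paired to spaces on the opposite side). In the rotating steps, rainbow sizes are trivially preserved. In the shape-changing step, the paper uses \emph{induction on the number of rainbows nested within the portion that changes}: it argues that the smallest nested rainbow of size $2\ell_j$ corresponds, in both the old and the new arc diagram, to a run of length $\ell_j$ at essentially the same location, and that after removing it the remaining dot--space pattern is literally identical in both diagrams, so the inductive hypothesis finishes. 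Your ``subcase on relative lengths and positions of adjacent runs'' does not capture this; without the inductive removal of the smallest nested rainbow, you have no mechanism for showing that the rainbow-removal procedure of \S\ref{subsec:run_lengths} outputs the same multiset before and after the redraw.
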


\begin{proof}
    First, we claim that the entire arc diagram associated to the tableau just shifts one position to the left at every step, except at steps where the leftmost rainbow hits the boundary (meaning the leftmost dot of the leftmost rainbow becomes $n$), which corresponds to the first step where a number corresponding to a dot from the rainbow gets promoted into the top row of the tableau.
    
    It is clear that the dots and arc diagrams will all just shift one position to the left at promotion steps where no rainbow is crossing over the boundary, which is equivalent to no dot being promoted into the top row. We claim that at all steps where a rainbow is crossing the boundary after the first step, the whole arc diagram will still just move one step to the left along the circular track at each promotion step, except that when the left endpoint of one of its arcs hits $n$, the dot at the right endpoint of that arc will jump over to the left endpoint at $n.$ To see that, note that it follows from Lemma \ref{lem:promoted_entry} that a dot gets promoted into the top row if and only if it is the first dot such that for some $k\ge 1$, it is the $k^{\text{th}}$ dot and it is at position $2k$ on the track. That is equivalent to saying that up to and including the dot, there are the same number of dots as spaces on the track.
    
    Since the arc diagram connects every dot to the right of the boundary but within the rainbow crossing the boundary with a space to its left, a dot will be promoted at the next step if and only if it is currently connected to a space at 1. This is because in that case, the dot in question has the same number of dots as spaces weakly to its left because the arc diagram gives a 1-to-1 pairing between those dots and those spaces. On the other hand, every dot to its left but past the boundary has strictly more spaces than dots that are weakly to its left, because if we focus on the region up to and including such a dot, every dot is paired with a space to its left within the region, while at least one space (namely, 1) is not paired to any dot within the region. Thus, none of those dots can get promoted yet, so the dot connected to 1 will get promoted. 
    
    Then when the dot connected to 1 gets promoted, that dot gets replaced with a dot at $n$, which is the new position of the left endpoint of its arc when we shift the entire arc diagram to the left. The rest of the arc diagram stays the same because all other numbers within the arc diagram will either stay as dots or stay as spaces. Thus, the only change to the arc diagram is that the dot connected to 1 jumps to the other endpoint of its arc and is now at $n$.

    We know that the run lengths can be read off from the arc diagram based only on the sizes of the rainbows and not the positions of the dots within the rainbows, so the multiset of run lengths stays the same as long as the arc diagram stays the same, and thus the tableau remains in $\T(n,\vec{\ell},\vec{r})$ at those steps. 
    
    It remains to check that the multiset of run lengths also stays the same at the steps where the arc diagram changes. If we have a single run of length $\ell_i$ with nothing nested inside it, then immediately before it crosses the boundary, there are $\ell_i$ spaces to the right of the boundary before the $\ell_i$ dots, which are precisely the $\ell_i$ extra spaces that get skipped when numbering the $i^{\text{th}}$ track, so that the run currently starts at position 1 on the $i^{\text{th}}$ track. The arc diagram connects those $\ell_i$ dots to the $\ell_i$ spaces to their right. Then it is the rightmost dot of the run that will first get promoted, so that dot jumps to $n$, and will now be connected to the space at position $2\ell_i-1$, while the remaining $\ell_i-1$ dots of the run will now be at positions $\ell_i-1,\dots,2\ell_i-2$, and they will be connected to the $\ell_i-1$ spaces at positions $1,2,\dots,\ell_i-1$. Thus, the rainbow will still have the same size $2\ell_i$, and the dots will still form a run of length $\ell_i$. This looks exactly the same as when one of our runs from Theorem \ref{thm:2row_linear} gets promoted and crosses the boundary.

    Now suppose we have one or more smaller runs nested inside the outer larger run. We will show by induction on the number of runs inside the original version of the portion of the arc diagram that changes that the multiset of run lengths does not change when we change the arc diagram. Look at the shortest run nested inside, which we can say has length $\ell_j < \ell_i$. Before we switch the arc diagram, that run looks like $\ell_j$ dots connected to $\ell_j$ spaces immediately to their right. Following those spaces, there could be more dots within the large rainbow, or more spaces. 
    
    If there are more dots following the $\ell_j$ spaces, there must be at least $\ell_j$ more dots, since the size $2\ell_j$ rainbow must be the smallest one nested inside the large outer rainbow. Then when we switch the arc diagram, those $\ell_j$ spaces will instead be connected to the $\ell_j$ dots immediately to their right. But then in both cases, we have a rainbow of size $2\ell_j$ nested at essentially the same place within the larger outer rainbow, with the only change being that the dots of that rainbow moved from being to the left of their spaces to being to the right of their spaces (so some of the dots changed which run they belong to). However, we still have a run of length $\ell_j$ in both cases, and the remaining dots after removing that rainbow look exactly the same in both cases.

    If the $\ell_j$ spaces are followed by more spaces, then the $\ell_j$ dots must also be preceded by more spaces, because if they were preceded by more dots, those dots would be connected to the spaces immediately after the size $2\ell_j$ rainbow, so the rainbow would actually have size larger than $2\ell_j.$ The dots must then be preceded by at least $\ell_j$ spaces, because if there were fewer than $\ell_j$ spaces to their left, then connecting the leftmost of those spaces to as many as possible of the dots immediately to their left would create a rainbow of size smaller than $2\ell_j$, contradicting the minimality of the size $2\ell_j$ rainbow. Thus, there are at least $\ell_j$ spaces to the left of the $\ell_j$ dots, so in the new arc diagram, the dots will get connected to the spaces immediately to their left instead of to their right. But then removing those dots and their associated spaces will again leave the same sequence of remaining dots and spaces for the original arc diagram as for the new arc diagram.

    Thus, in either case, the smallest run within the portion whose arc diagram changes stays the same length in both cases, and also the remaining set of dots and spaces once the rainbow corresponding to those dots is removed looks exactly the same in the new arc diagram as in the old one. Then it follows by induction on the number of rainbows within the changed portion of the arc diagram that the entire list of run lengths stays the same in the new arc diagram as in the old one (since we already did the base case of a single rainbow). Thus, the tableau stays in $\T(n,\vec{\ell},\vec{r})$ at every promotion step.
\end{proof}

\begin{lemma}\label{lem:gaps_constant}
    At every promotion step, the gap sequences on the tracks do not change.
\end{lemma}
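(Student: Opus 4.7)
Plan: The plan is to mirror the case analysis from the proof of Lemma \ref{lem:closed_under_promotion}. First I would handle the easy case: a promotion step at which no dot of $T$ is promoted to the top row. There the entire arc diagram simply shifts one position to the left. On each track $i$, the collection of length-$\ell_i$ rainbows undergoes a uniform cyclic rotation by one position, so the starting positions of all consecutive length-$\ell_i$ runs on track $i$ decrease by one modulo $n_i$. Their consecutive differences — the gap sequence $\vec{g}(i)$ — are therefore tautologically unchanged.

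The harder case is when a length-$\ell_i$ rainbow is crossing the boundary at this step. Lemma \ref{lem:closed_under_promotion} has shown that the only change to the arc diagram is that the dot at the right endpoint of the arc whose left endpoint is at position $1$ jumps to position $n$. I would argue separately that this preserves $\vec{g}(i)$ and, for each $j\ne i$, $\vec{g}(j)$. For track $i$, the moving dot stays inside the crossing rainbow while the other length-$\ell_i$ rainbows are untouched in the original number line; by recomputing the track-$i$ numbering of the new tableau and comparing to the old, the two numberings differ by a uniform cyclic shift, so the starts of all length-$\ell_i$ runs on track $i$ shift by the same constant and $\vec{g}(i)$ is preserved. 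For each $j\ne i$, no length-$\ell_j$ dot changes in the original numbering; the only change to the numbering of track $j$ comes from the shift of the $2\min(\ell_i,\ell_j)$ skipped positions at the center of the crossing rainbow, and this shift is itself a single cyclic step that uniformly relabels track $j$. So again the starting position of every length-$\ell_j$ run on track $j$ moves by the same amount and $\vec{g}(j)$ is preserved.

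The main obstacle I anticipate is the verification for track $i$ when the crossing rainbow contains shorter nested rainbows. The proof of Lemma \ref{lem:closed_under_promotion} already describes how these nested pieces reorganize at each step while leaving the multiset of rainbow sizes invariant; building on that description, I would invoke Proposition \ref{prop:bottleneck_issues} to ensure that only one rainbow crosses at a time, thereby reducing the problem to a local statement about the single crossing rainbow together with its nested content. From there, a position-by-position comparison of the old and new track numberings — checking that any dot that changes endpoints within a nested sub-rainbow stays inside the same rainbow, so that neighbor-of relations on each track are preserved — is enough to complete the uniform-shift argument and conclude that both $\vec{g}(i)$ and each $\vec{g}(j)$ are unchanged.
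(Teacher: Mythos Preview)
Your plan handles the routine steps correctly, and the overall strategy of separating track $i$ from tracks $j\ne i$ is the right shape. The genuine gap is in the transition step---the single promotion step at which a rainbow with nested content first reaches the boundary and the arc diagram is redrawn. Two of the specific claims you rely on there are false. First, Proposition~\ref{prop:bottleneck_issues} does \emph{not} ensure that ``only one rainbow crosses at a time'': it rules out two runs on different tracks both sitting in the bottleneck region of their own tracks, but it says nothing about a shorter rainbow nested inside a longer one, and indeed nested rainbows always cross together with their outer rainbow. Second, the assertion that ``any dot that changes endpoints within a nested sub-rainbow stays inside the same rainbow'' is wrong at this step: as the worked example following the proof shows, individual dots can switch run membership when the arc diagram is redrawn (the $4$ and $6$ swap runs). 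Consequently, for $j\ne i$ the set of positions skipped on track~$j$ for each larger rainbow inside the crossing region changes from ``last $\ell_j$ dots of a consecutive block plus the following $\ell_j$ spaces'' to ``first $\ell_j$ dots plus the preceding $\ell_j$ spaces,'' which is not a single global cyclic shift of track~$j$, so your uniform-relabelling argument does not go through as stated.

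The paper closes this gap by induction on the number of runs nested inside the changed portion. It peels off a run of minimal length $\ell_j$; the proof of Lemma~\ref{lem:closed_under_promotion} already shows that after removing that size-$2\ell_j$ rainbow the residual dot/space sequence is identical on both sides of the transition, so the inductive hypothesis handles all tracks other than~$j$. For track~$j$ itself, the paper does \emph{not} attempt to preserve neighbour relations among individual dots; instead it counts, for each pair of consecutive length-$\ell_j$ runs (and for the last run inside the changed region versus the first outside it), the number of positions skipped between them, and checks that this count is unchanged because in both arc diagrams it equals $2\ell_j$ times the number of intervening blocks of consecutive dots. That counting argument---rather than a uniform cyclic relabelling---is the missing ingredient in your plan.
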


\begin{proof}
    Again, this is clear as long as no rainbow is crossing the boundary, so it suffices to show that it also holds at the steps where the arc diagram changes, and at the steps where a rainbow is not changing but is crossing the boundary.

    While a rainbow is crossing the boundary, the arc diagram does not change except that each time a dot gets promoted into the top row, it hops from the right endpoint of its arc to the left endpoint. If the dot getting promoted is not the first in its run, then the gaps do not change because the gaps can all be determined from where the first dot in each run is on its track (since all other dots in the runs will just end up immediately behind the first dot of their runs on their tracks), and the first dot in the run of the dot being promoted just shifts one position to the left during that step, like all other dots on the track.
    
    If the dot being promoted is the first in a run of some length $\ell_i$, we note that before it hops, the left endpoint of its arc must have been at position 1, as we saw above. Also, all other numbers between position 1 and the dot about to hop are either dots from a shorter run nested inside the run of the dot being promoted (in which case they will be skipped in the numbering for the $i^{\text{th}}$ track), or they are other dots from the same run, or the left endpoints of the arcs connected to those dots (which are precisely the $\ell_i$ extra spaces that get skipped in the numbering for the $i^{\text{th}}$ track). Thus, at the step where the dot gets promoted and the start of its run moves to the last position on its track, the start of its run must have previously been at position 1 on its track, since in the numbering for the $i^{\text{th}}$ track, all numbers before the first dot of the run get skipped either because they belong to smaller rainbows or because they are among the extra $\ell_i$ skipped spaces. Thus, the start of the run still just shifts one position to the left at that step, as do the starts of all later runs of the same length, so the gaps along the $i^{\text{th}}$ track stay the same.

    It remains to show that at steps where the arc diagram changes, the runs within the changed portion still all just shift one position to their left along their respective tracks, thus maintaining the same gaps between them and the other runs on their tracks.

    Again, we will use induction on the number of runs nested within the starting arc diagram. The argument above gives us the base case of a single run. Now consider what happens when we remove a run of minimal length $\ell_j$ within the portion of the arc diagram that changes. As we saw earlier, once that run and its associated spaces are removed, the remaining set of dots and spaces looks exactly the same before and after the change in the arc diagram, so it follows by the inductive hypothesis that the gaps will stay the same on all other tracks when the length $\ell_j$ run is added back, because those $2\ell_j$ numbers get skipped in the numbering of every other track.

    Now for the $j^{\text{th}}$ track itself, to show that the gaps between the run in question and other runs on its track stay the same, it suffices to consider the gaps between consecutive length $\ell_j$ runs that are both within the rainbow that changes, and the gap between the last length $\ell_j$ run in the part that changes and the first length $\ell_j$ run in the part that does not change. 
    
    The position of each run on the $j^{\text{th}}$ track is determined by the position of its dots on the main merged track together with the number of dots and spaces that get removed prior to the run. Within the portion that changes, we can identify the length $\ell_j$ runs because they correspond to exactly $\ell_j$ dots with at least $\ell_j$ spaces on either side, or to exactly $\ell_j$ spaces with at least $\ell_j$ dots on either side. This does not change when we change the arc diagram. 
    
    The numbers that get skipped when we number the $j^{\text{th}}$ track are the middle $2\ell_j$ numbers of each other rainbow. In the starting arc diagram, these are the last $\ell_j$ dots of each other set of consecutive dots on the main track and the $\ell_j$ spaces following them, and in the ending arc diagram, they are the first $\ell_j$ dots of each set of consecutive dots and the $\ell_j$ spaces preceding them. Thus, for two length $\ell_j$ runs that are both within the portion that changes, the number of skipped numbers between the runs does not change because in both cases it is just equal to $2\ell_j$ times the number of other sets of consecutive dots in between the two sets of dots of interest. The same thing holds for the last length $\ell_j$ run within the changed portion and the first length $\ell_j$ run within the portion that does not change, so the number of skipped numbers between those two runs also stays the same. Thus, the gaps between the length $\ell_j$ runs stay the same, completing the proof.
\end{proof}

\begin{lemma}\label{lem:tracks_rotating}
    At every promotion step, all dots rotate one position to the left on their tracks, except that whenever a run of length $\ell_i$ is crossing the boundary, for every $j\ne i$ the $j^{\text{th}}$ track pauses while the last $\min(\ell_i,\ell_j)$ dots of the run are crossing the boundary, and also during the following $\min(\ell_i,\ell_j)$ steps.
\end{lemma}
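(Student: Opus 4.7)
The plan is to handle two regimes separately. At a generic promotion step --- one where no length-$\ell_i$ rainbow is straddling the track boundary --- Lemma \ref{lem:closed_under_promotion} tells us that the arc diagram shifts uniformly one position to the left on the number line. Every rainbow shifts by one, so the middle $2\min(\ell_i,\ell_j)$ block of each length-$\ell_i$ rainbow that track $j$ skips (along with the extra $\ell_j$ front spaces) also shifts left by one. Every dot on the number line likewise shifts left by one, so counting skipped positions before each dot gives exactly the same track coordinate minus one, which is what it means for every dot to advance one step to the left on its track. Thus the lemma is immediate at generic steps.

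The substance of the lemma is at crossing steps, when a length-$\ell_i$ run is mid-crossing. By the analysis in the proof of Lemma \ref{lem:closed_under_promotion}, the crossing occupies exactly $\ell_i$ consecutive promotion steps during which the arcs of this rainbow successively reconfigure at the boundary. For track $i$ itself, the gap-preservation argument in the proof of Lemma \ref{lem:gaps_constant} already establishes that the length-$\ell_i$ run (and all other runs on track $i$, which are not themselves crossing) still rotates one position to the left on track $i$ at each of these steps. So the only thing left to verify is the behavior of tracks $j \ne i$.

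For such a $j$, I plan to follow the block of $2\min(\ell_i,\ell_j)$ skipped middle positions of the crossing rainbow through the $\ell_i$ crossing steps and the $\min(\ell_i,\ell_j)$ steps that follow. Outside the pause window, the skipped block sits as a contiguous interval entirely on one side of the boundary, the rest of the arc diagram is simply shifting uniformly, and the computation reduces to the generic case so that track-$j$ dots shift one step left. Inside the pause window --- the last $\min(\ell_i,\ell_j)$ crossing steps plus the first $\min(\ell_i,\ell_j)$ post-crossing steps --- the skipped block straddles the boundary. I will show step by step that at each such step one skipped position appears on one side of the boundary while one disappears from the other side, in a way that exactly cancels the usual one-position leftward drift of every track-$j$ coordinate, so that every dot on track $j$ stays put.

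The main obstacle is making this cancellation argument precise, and I plan to handle it by induction on $\min(\ell_i,\ell_j)$. The base case $\min(\ell_i,\ell_j) = 1$ is a direct check using the explicit hop description in the proof of Lemma \ref{lem:closed_under_promotion}: a single pair of skipped positions transits the boundary over two pause steps, and one verifies by hand that track-$j$ dots do not move during those two steps. The inductive step peels off the outermost pair of the $2\min(\ell_i,\ell_j)$ skipped endpoints and handles them via the base case, reducing the interior to a $\min - 1$ crossing. Throughout, Proposition \ref{prop:bottleneck_issues} guarantees that no second rainbow of a different length is simultaneously within the pause window at the boundary, which means the analysis for each $j$ only ever involves one crossing rainbow at a time and the local pause picture composes cleanly with the uniform shifting of the rest of the arc diagram.
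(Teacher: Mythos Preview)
Your outline contains a genuine error in its use of Proposition \ref{prop:bottleneck_issues}. That proposition is a statement about \emph{track} coordinates: it says the tracks-to-tableau map never produces a configuration with two runs of different lengths simultaneously within the forbidden zone near the boundary \emph{on their respective tracks}. It does \emph{not} say that on the number line only one rainbow at a time can be straddling the boundary. In fact nested rainbows cross together: the worked example at the end of \S\ref{subsec:2row_proof} shows a length-$3$ rainbow with a length-$1$ rainbow nested inside, and both are at the boundary during overlapping promotion steps. So your claim that ``the analysis for each $j$ only ever involves one crossing rainbow at a time'' is false, and your plan to isolate a single crossing and run an induction on $\min(\ell_i,\ell_j)$ breaks down once several rainbows are in play simultaneously.

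The paper sidesteps this entirely with a much more direct argument, and I would recommend you adopt it. The key observation is that since the dots on track $j$ are already known (by Lemma \ref{lem:gaps_constant}) to stay in runs with constant gaps, it suffices to track the position of the \emph{first} run on track $j$. That run's track-$j$ coordinate equals its number-line position minus the number of track-$j$-skipped positions lying between it and the boundary. The number-line position always drops by one per promotion step, so the track-$j$ coordinate stays fixed precisely when one skipped position migrates across the boundary at that step. But the skipped positions for track $j$ are exactly the central $2\min(\ell_i,\ell_j)$ numbers of each length-$\ell_i$ rainbow (for each $i\ne j$), and those central numbers cross the boundary during precisely the last $\min(\ell_i,\ell_j)$ dot-hopping steps of the length-$\ell_i$ run plus the $\min(\ell_i,\ell_j)$ steps immediately after. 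This gives the pause window directly, with no induction and no need to assume only one rainbow is crossing: if several rainbows are crossing at once, several skipped positions may migrate, but each one is accounted for by its own length-$\ell_i$ run's pause window.
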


\begin{proof}
    At each step, the dots all rotate one position to the left along the main track. Generally the number of skipped numbers in the $j^{\text{th}}$ track numbering will also stay the same between each pair of consecutive length $\ell_j$ runs, and between the first length $\ell_j$ run and start of the main track, meaning the dots will also just rotate one position to the left along the $j^{\text{th}}$ track. We have also already checked that along the $j^{\text{th}}$ track, the dots will always stay in runs of length $\ell_j$ with constant gaps between them, so it suffices to consider whether or not the first run on the $j^{\text{th}}$ track moves closer to the boundary at a given step.

    Since its position along the main track moves one step closer to the boundary at every promotion step, the first run on the $j^{\text{th}}$ track stays still if and only if the number of skipped numbers between its first dot and the boundary decreases by 1, as opposed to staying the same. The number of such skipped numbers will decrease by 1 precisely when one of the skipped numbers moves to the other side of the boundary. Aside from the extra $\ell_j$ skipped numbers at the beginning of the track (which are not relevant because they are always after the start of the boundary and before the run in question unless that run itself is crossing over the boundary, in which case it definitely is not staying still), the skipped numbers are the middle $2\min(\ell_i,\ell_j)$ numbers of each rainbow for a run of a different length $\ell_i\ne \ell_j.$ 
    
    The steps where that central portion of the rainbow is crossing the boundary are precisely the steps where each of its final $\min(\ell_i,\ell_j)$ dots crosses the boundary (which look in terms of arc diagrams like a dot hopping from the right endpoint to the left endpoint of its arcs while the arc diagram itself just shifts one position to the left), followed by the next $\min(\ell_i,\ell_j)$ steps where the right half of that central portion of the rainbow (consisting of the right endpoints of the arcs whose left endpoints are now dots on the other side of the boundary) crosses over the boundary. Thus, the tracks stay still precisely during the steps described in Lemma \ref{lem:tracks_rotating}, as claimed.
\end{proof}

\begin{lemma}\label{lem:divisor_of_num_tableaux}
    If an orbit of tableaux within $\T(n,\vec{\ell},\vec{r})$ does not include the full number $P_d(n,\vec{\ell},\vec{r})$ of possible tableaux given its gap sequences, the number of tableaux within the orbit is a divisor of $P_d(n,\vec{\ell},\vec{r})$.
\end{lemma}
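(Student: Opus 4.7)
By Lemma \ref{lem:gaps_constant}, the orbit lies in the subset $S \se \T(n,\vec\ell,\vec r)$ of tableaux sharing the orbit's gap sequences, and by Proposition \ref{prop:inverses}, $|S| = P_d(n,\vec\ell,\vec r)$. Since $\mc{P}$ is a bijection of $S$, the orbit size $m$ of any $T \in S$ will divide $P_d$ as soon as I verify $\mc{P}^{P_d}$ acts as the identity on $S$.

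To analyze $\mc{P}^N$, I use Lemma \ref{lem:tracks_rotating}. If a closed orbit has length $m$ and track $i$ completes $K_i$ full rotations, then track $i$ advances $K_i n_i$ times and pauses for the remaining $m - K_i n_i$ steps. The pauses on track $i$ come from crossings of length-$\ell_j$ runs for $j\ne i$: each of the $r_j$ length-$\ell_j$ runs pauses track $i$ for $2\min(\ell_i,\ell_j)$ steps per crossing and completes $K_j$ crossings per orbit, giving the linear system
\[
m \;=\; K_i n_i + \sum_{j \ne i} 2 r_j K_j \min(\ell_i,\ell_j) \qquad (i = 1,\dots,d).
\]
Note that this system depends only on $(n,\vec\ell,\vec r)$, so the allowed pairs $(m,\vec K)$ are the same across all starting tableaux in $S$; the orbit closes at step $m$ if and only if the $K_i$ are nonnegative integers.

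The crux is then to produce such an integer solution at $m = P_d$. My plan is to introduce a ``synchronized shift'' bijection $\Phi \colon S \to S$ that advances every track by one step simultaneously, canonically routing around bottleneck-forbidden configurations, and to show that $\Phi$ is a cyclic permutation of $S$ of order exactly $P_d$. Promotion will then coincide with a specific power $\mc{P} = \Phi^k$, with $k$ read off from the track-displacement data in Lemma \ref{lem:tracks_rotating}; each $\mc{P}$-orbit will therefore have size $P_d/\gcd(k,P_d)$, a divisor of $P_d$. The main obstacle is defining $\Phi$ rigorously at bottleneck transitions and verifying the identity $\mc{P} = \Phi^k$: this reduces to checking that the two dynamics traverse the same cyclic sequence of $P_d$ configurations, which in turn follows from the matching rotation statistics described by the linear system above.
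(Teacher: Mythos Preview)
Your proposal is not a proof; it is a sketch that explicitly leaves open the two steps that carry all the content. You never construct the map $\Phi$, and you never verify $\mc{P}=\Phi^k$; you say yourself that ``the main obstacle is defining $\Phi$ rigorously at bottleneck transitions and verifying the identity $\mc{P}=\Phi^k$.'' There is also a conceptual mismatch: by Lemma~\ref{lem:tracks_rotating}, a single promotion step advances some tracks by $1$ and leaves others fixed, so one step of $\mc{P}$ does \emph{not} move all tracks by the same amount. Hence $\mc{P}$ cannot literally equal $\Phi^k$ for a ``synchronized shift'' $\Phi$ that advances every track by one. Your suggested fix of ``canonically routing around bottleneck-forbidden configurations'' would have to absorb exactly these desynchronizations, but you give no indication of how to do this, and the linear system you write down (which averages over a full orbit) cannot by itself control what happens step by step.

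The paper avoids all of this by proving something slightly stronger and more elementary: any two orbits in $S$ have the \emph{same} length. One compares an orbit $\mc O$ with an orbit $\mc O'$ obtained by shifting a single track by one position at the start. The key observation is that the offset between the two copies of each track is preserved except for a bounded window around boundary crossings, after which the offset resets to its original value; hence both orbits close at the same step. Iterating single-track shifts connects any two starting configurations in $S$, so all orbits have a common length $L$, and since $|S|=P_d$ is a disjoint union of these orbits, $L\mid P_d$. If you want to salvage your approach, the cleanest route is to drop $\Phi$ entirely and instead prove directly that $\mc P^{P_d}=\mathrm{id}$ on $S$; but the paper's ``equal orbit lengths via single-track shift'' argument already gives this with less machinery.
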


\begin{proof}    
    Compare some orbit $\mc{O}$ within $\T(n,\vec{\ell},\vec{r})$ to another orbit $\mc{O}'$ where the $i^{\text{th}}$ track starts shifted by some amount $s_i\le \ell_d$ in $\mc{O}'$ compared to where it starts in $\mc{O}$, while all other tracks start in the same positions in $\mc{O}$ and $\mc{O'}$. We claim that $\mc{O}$ and $\mc{O}'$ have the same length. To see this, we will show that throughout $\mc{O}$ and $\mc{O}'$, the $i^{\text{th}}$ track will always stay offset by the same amount in $\mc{O}'$ relative to $\mc{O}$, while  
    
    The only times when that may not be the case are when a run on the $i^{\text{th}}$ track is crossing the boundary or has just finished crossing the boundary in one or both $\mc{O}$ and $\mc{O'}$. If we say the $i^{\text{th}}$ track is one step ahead on $\mc{O}$ compared to on $\mc{O'}$, then for each $j\ne i,$ the $j^{\text{th}}$ track will pause one step earlier in $\mc{O}$ than in $\mc{O}'$ to let the final portion of that run cross the boundary. Then the position of the $j^{\text{th}}$ track in $\mc{O}'$ will be offset by one position from where it is in $\mc{O}$ for the next $2\min(\ell_i,\ell_j)-1$ steps, since the position at which it stops moving is offset by one in $\mc{O}$ as compared to $\mc{O}$, and for the next $2\min(\ell_i,\ell_j)-1$ steps, the $j^{\text{th}}$ track will be paused in both orbits. Then the $j^{\text{th}}$ track will restart moving a step earlier in the orbit $\mc{O}$ in which it paused first, so after that step, its position in the two orbits will again match, since from then on it will be moving in both orbits, until the next time when a run on the $i^{\text{th}}$ track is crossing the boundary in $\mc{O}$. These temporary offsets that soon get reset will not change the orbit length, since the orbit length is determined by when all the runs simultaneously return to their starting positions. Thus, the orbits $\mc{O}$ and $\mc{O}'$ have the same length.

    Now, given two orbits within the size $P_d(n,\vec{\ell},\vec{r}$) subset of $\T(n,\vec{\ell},\vec{r})$ that corresponds to a particular set of gap sequences, we can show that those orbits always have the same length by turning one orbit into the other through a series of these steps of shifting one track at a time by one position while keeping the other tracks fixed. If we run into a bottleneck issue when trying to shift one of the tracks by one, then we can just wait some number of promotion steps until the rainbow that would cause the bottleneck issue has finished crossing the boundary, and then shift the track by one at that position in the orbit instead to show that the orbits have the same length. It follows that $\T(n,\vec{\ell},\vec{r})$ can be partitioned into subsets of size $P_d(n,\vec{\ell},\vec{r})$, each of which is closed under promotion and can be partitioned into promotion orbits that all have the same length. Thus, this common orbit length must be a divisor of $P_d(n,\vec{\ell},\vec{r}).$
    \end{proof}

Putting these lemmas together completes the proof of Theorem \ref{thm:2row}. \qed

\bigskip

To understand why the orbit length might be a divisor of $P_d(n,\vec{\ell},\vec{r})$ instead of equaling $P_d(n,\vec{\ell},\vec{r})$, note that recovering the starting tableau after some number of promotion steps is equivalent to having all the tracks return to their starting positions. The number of promotion steps for the $i^{\text{th}}$ track to rotate all the way around is always $n-\ell_i$. Intuitively, this is because if we look at any fixed run on that track, the smallest entry of that run essentially decreases by 1 at each step, except that when that run is crossing the boundary, it ``skips ahead" by $\ell_i$ steps because its smallest entry changes directly from $\ell_i+1$ to $n$. More precisely, we can see this because the $i^{\text{th}}$ track length is $n_i = n-\ell_i-2\sum_{j\ne i}r_j\min(\ell_i,\ell_j)$, so there are $n_i$ steps per round where the track is rotating by 1 position at each step, plus an additional $2\sum_{j\ne i}r_j\min(\ell_i,\ell_j)$ steps per round where it is staying still, since there are $2\min(\ell_i,\ell_j)$ such steps for each of the $r_j$ runs of each length $\ell_j\ne \ell_i.$ Thus, in total it takes $n-\ell_i$ steps per round for the track to return to its starting position.

One reason for the orbit length to be shorter than $P_d(n,\vec{\ell},\vec{r})$ is if there is at least one $i$ for which the $i^{\text{th}}$ track has $d_i$-fold rotational symmetry for some $d_i>1,$ meaning the track length $n_i$ is a multiple of $d_i$, the number $r_i$ of runs on the track is a multiple of $d_i$, and the gap sequence has $d_i$-fold rotational symmetry. In that case, the orbit length would just get divided by $d_i$ for each such $i$, since there are only $1/d_i$ times as many distinct rotations for the $i^{\text{th}}$ track.

Now to understand how the tracks move relative to each other, note that for the $i^{\text{th}}$ and $j^{\text{th}}$ tracks for each $i < j,$ after each round the offset between the tracks changes by $\ell_i-\ell_j$, because it takes $n-\ell_i$ steps for the $i^{\text{th}}$ track to return to its starting position, but $n-\ell_j$ steps for the $j^{\text{th}}$ track to return to its starting position, meaning the $i^{\text{th}}$ track will have rotated an additional $\ell_i-\ell_j$ positions by the time the $j^{\text{th}}$ track returns to its starting position. Thus, as long as $\ell_i-\ell_j$ is relatively prime to the lengths $n_i$ and $n_j$ of the $i^{\text{th}}$ and $j^{\text{th}}$ tracks, the $i^{\text{th}}$ and $j^{\text{th}}$ tracks will cycle through all possible offsets over the course of the orbit. However, if $d_{ij}=\gcd(\ell_i-\ell_j,n_i,n_j)>1,$ then the offset between the $i^{\text{th}}$ and $j^{\text{th}}$ tracks will always stay the same mod $d_{ij}$ (except that it will briefly change when one of the tracks is staying still but the other is moving), so the number of possible pairs of positions for those two tracks, and in fact the entire orbit length, will just get divided by $d_{ij}$ compared to what it would have been. More generally, if we have some larger set $S$ of tracks that all return to their starting positions after $1/d_S$ times the number of expected steps (when the tracks not in $S$ are ignored), the full orbit length will just get divided by $d_S.$

\bigskip

As a consequence of Theorem \ref{thm:2row}, we can count the total number of tableaux in $\T(n,\vec{\ell},\vec{r})$:

\begin{corollary}\label{cor:num_tableaux}
    The total number of tableaux in $\T(n,\vec{\ell},\vec{r})$ is $$P_d(n,\vec{\ell},\vec{r})\cdot \prod_{i=1}^d \frac1{r_i}\binom{n_i-2r_i\ell_i+r_i-1}{r_i-1}.$$
\end{corollary}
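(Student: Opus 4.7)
The plan is to count tableaux in $\T(n,\vec{\ell},\vec{r})$ via the bijection with (position sequence, gap sequence) data established in \S\ref{subsec:tableau_to_tracks} and \S\ref{subsec:tracks_to_tableau}, and then correct for the overcounting coming from cyclic relabeling of the ``first run'' on each track.

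First I would count the number of ordered gap sequences on the $i^{\text{th}}$ track. These are the sequences $g_1(i),\dots,g_{r_i}(i)$ with $g_1(i)+\dots+g_{r_i}(i)=n_i$ and each $g_j(i)\ge 2\ell_i$. Substituting $g_j'(i):=g_j(i)-2\ell_i$ turns this into counting nonnegative integer compositions of $n_i-2r_i\ell_i$ into $r_i$ parts, so by stars and bars there are $\binom{n_i-2r_i\ell_i+r_i-1}{r_i-1}$ of them. Taking the product over $i$ and multiplying by the number $P_d(n,\vec{\ell},\vec{r})$ of valid position sequences for each fixed choice of gap sequences (which is independent of the gap sequences by Theorem \ref{thm:2row}) gives
\[
P_d(n,\vec{\ell},\vec{r})\cdot\prod_{i=1}^d\binom{n_i-2r_i\ell_i+r_i-1}{r_i-1}
\]
as the total number of (position, gap) tuples satisfying all the required constraints.

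Next I would check that each tableau $T[n]\in\T(n,\vec{\ell},\vec{r})$ is represented by exactly $\prod_{i=1}^d r_i$ such tuples, so that dividing by $\prod_i r_i$ yields the desired formula. For each track $i$ we may declare any one of the $r_i$ runs of length $\ell_i$ to be the ``first'' run; this choice uniquely determines both the cyclic shift of $\vec{g}(i)$ and the position $p_i$, and distinct choices on different tracks are independent. Hence the map from (tableau, labeling of first runs) to (position, gap) tuples is surjective with fiber size $\prod_i r_i$, provided I show it is injective.

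The main thing requiring care is that gap-sequence symmetries do not collapse the count. Suppose $\vec{g}(i)$ has $d_i$-fold cyclic symmetry, so it has only $d_i$ distinct cyclic shifts rather than $r_i$. Then among the $r_i$ labelings of the ``first run'' on track $i$, the $r_i/d_i$ labelings sharing a given cyclic shift of $\vec{g}(i)$ produce starting positions $p_i$ which differ from each other by multiples of $n_i/d_i$ (the sum of any $r_i/d_i$ consecutive gaps). Since $n_i/d_i$ is not a multiple of $n_i$, those $r_i/d_i$ positions are all distinct modulo $n_i$, so the $r_i$ labelings on track $i$ give $d_i\cdot(r_i/d_i)=r_i$ distinct (shift, position) pairs in total. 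Taking the product over $i$, each tableau yields $\prod_i r_i$ distinct tuples as claimed, and the formula follows immediately upon dividing.
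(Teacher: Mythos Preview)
Your proposal is correct and follows essentially the same route as the paper: count (position, ordered gap sequence) pairs via stars and bars and the definition of $P_d$, then divide by $\prod_i r_i$ because each tableau admits exactly $\prod_i r_i$ choices of ``first run.'' The paper's proof is slightly more terse in that it does not separately argue injectivity in the symmetric case; it simply invokes the bijection of Proposition~\ref{prop:inverses} between tableaux-with-a-designated-first-run and (position, gap) data.

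One minor remark: your symmetry paragraph has some notational slips (with ``$d_i$-fold symmetry'' as you use it, consecutive same-shift labelings are $d_i$ positions apart on the track, so their starting positions differ by the sum of $d_i$ consecutive gaps, i.e.\ by $n_id_i/r_i$, not $n_i/d_i$). But in fact this whole discussion is unnecessary: the $r_i$ runs on track $i$ sit at $r_i$ distinct starting positions (since each $g_j(i)\ge 2\ell_i>0$), so different choices of first run automatically yield different position coordinates $p_i$, regardless of whether the gap sequence has extra symmetry. That alone gives the injectivity you want.
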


\begin{remark}
    Note that the $\T(n,\ell,r)$ case from \S\ref{sec:2row_linear} is the special case $d=1$. In that case, $n_1=n-\ell$ and $P_1(n,\ell,r)=n-\ell,$ since there are $n-\ell$ rotations for the single track of length $n_1=n-\ell$. The formula in Corollary \ref{cor:num_tableaux} thus becomes $$(n-\ell)\cdot\frac1r\binom{n-\ell-2r\ell+r-1}{r-1},$$ matching what we get from plugging in $q=1$ to the CSP polynomial in Theorems \ref{thm:2row_linear} and \ref{thm:maj_index}.
\end{remark}

\begin{proof}
    The counting argument is essentially the same as in the $\T(n,\ell,r)$ case. $P_d(n,\vec{\ell},\vec{r})$ counts the number of valid rotations of the $d$ tracks given a fixed sequence of gaps for each track, if we designate a particular ``first" run on each track. Thus, multiplying $P_d(n,\vec{\ell},\vec{r})$ by the number of ways to assign gap sequences to the $d$ tracks gives the total number of valid ways to place the runs of dots on all the tracks, and also to choose a ``first" run for every track. Each tableau in $\T(n,\vec{\ell},\vec{r})$ corresponds to a unique arrangement of dots on tracks, but then for each such dot arrangement, there are $r_1r_2\dots r_d$ ways to choose the ``first" runs for all the tracks. Thus, $\T(n,\vec{\ell},\vec{r})$ is equal to $P_d(n,\vec{\ell},\vec{r})$, times the number of valid gap sequences, divided by $r_1r_2\dots r_d.$

    The gap sequences for different tracks can be chosen independently, so we just need to count the number of possible gap sequences for the $i^{\text{th}}$ track for each $i$. We need $r_i$ gaps $g_1,\dots,g_{r_i}$ that sum to $n_i$ and are each at least $2\ell_i.$ Subtracting $2\ell_i$ from every gap, this is equivalent to choosing $r_i$ nonnegative numbers with sum $n_i-r_i\cdot 2\ell_i$, which by stars and bars is $$\binom{n_i-2r_i\ell_i+r_i-1}{r_i-1}.$$ Putting this together proves Corollary \ref{cor:num_tableaux}.
\end{proof}

We end this section with an example to illustrate what happens at the tableau, arc diagram, and track level while a run with another run nested inside it crosses the boundary:

\begin{example}
    Returning again to the tableau from examples \ref{ex:finding_runs}, \ref{ex:finding_tracks}, and \ref{ex:merging_tracks}, we show below the promotion steps during which one of the large rainbows with the smaller rainbow nested inside it crosses the boundary. We show what happens for the relevant portions of the tableau, the arc diagram, and the two tracks. 
    
    Below is the rainbow immediately before it starts crossing the boundary, and the corresponding tableau. The colored numbers in the top row are the ones in the bottom row $T$ of the tableau. Note that for the first track, we skip over the numbers $(5,6)$ corresponding to a run on the other track and the space after it, and on the second track, we skip over the numbers $(8,9)$ corresponding to the end of a run on the first track and the space after it. We also skip the additional 3 numbers $1,2,3$ for the first track, and additional $1$ for the second track. In each tableau, we highlight in green the entries in $T$ belonging to the green run, and in orange the one belonging to the orange run.
    \begin{center}
        \raisebox{-.5\height}{\includegraphics[width=8.25cm]{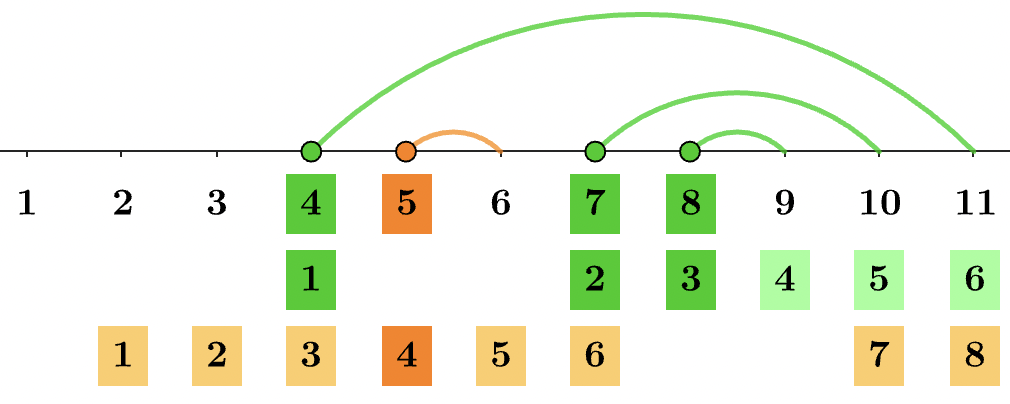}}
        \hspace{1cm}
        \begin{ytableau}
            1 & 2 & 3 & 6 & 9 & \cdots \\
            *(green) 4 & *(orange) 5 & *(green) 7 & *(green) 8 & \cdots
        \end{ytableau}
    \end{center}
    In the next step, the 8 is promoted into the top row and wraps around to become a 20. The entire rainbow now gets redrawn so that the new dot at 20 can be connected to a space after it, with our other dots now connected to spaces to their left instead of to their right. The 4 and 6 switch run membership, so the 4 becomes a member of the longer run and the 6 of the shorter run. Along both individual tracks, however, the dots all just shift one position to the left, as usual. The 4 will be promoted next.
    \begin{center}
        \raisebox{-.5\height}{\includegraphics[width=8.25cm]{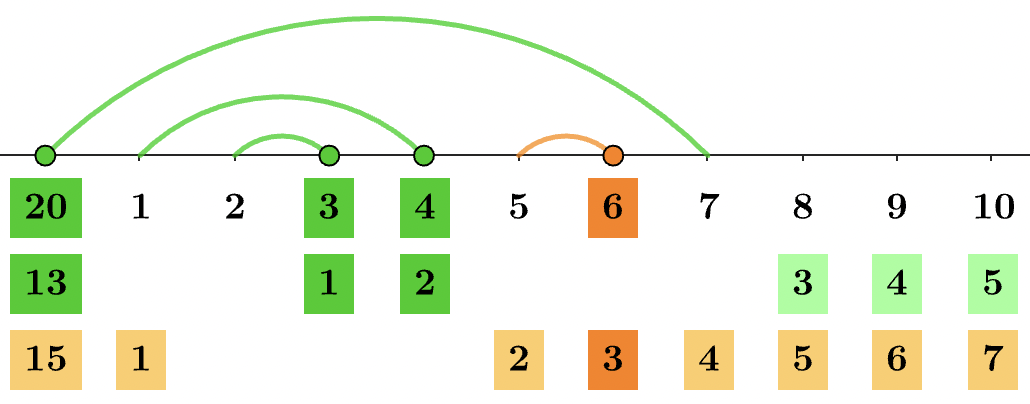}}
        \hspace{1cm}
        \begin{ytableau}
            1 & 2 & 5 & 7 & \cdots & \cdots \\
            *(green) 3 & *(green) 4 & *(orange) 6 & \cdots & *(green) 20
        \end{ytableau}
    \end{center}
    Next, the 4 gets promoted into the top row of the tableau, corresponding to its dot hopping over to the left endpoint of its arc. The dots still shift to the left on both individual tracks.
    \begin{center}
        \raisebox{-.5\height}{\includegraphics[width=8.25cm]{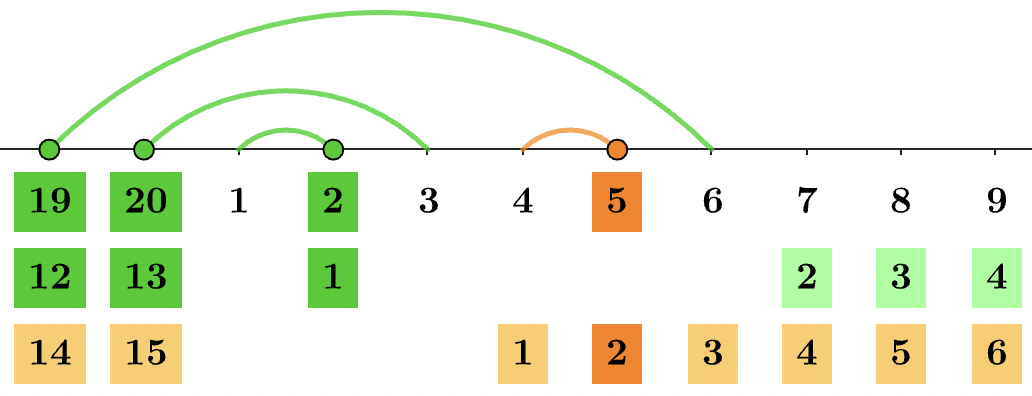}}
        \hspace{1cm}
        \begin{ytableau}
            1 & 3 & 4 & \cdots & \cdots & \cdots \\
            *(green) 2 & *(orange) 5 & \cdots & *(green) 19 & *(green) 20
        \end{ytableau}
    \end{center}
    Next, the 2 gets promoted, hopping over to the left endpoint of its arc. At this step and the next step (during which nothing gets promoted into the top row), the dots along the first track continue shifting to the left, while the dots on the second track hold still.
    \begin{center}
        \raisebox{-.5\height}{\includegraphics[width=8.25cm]{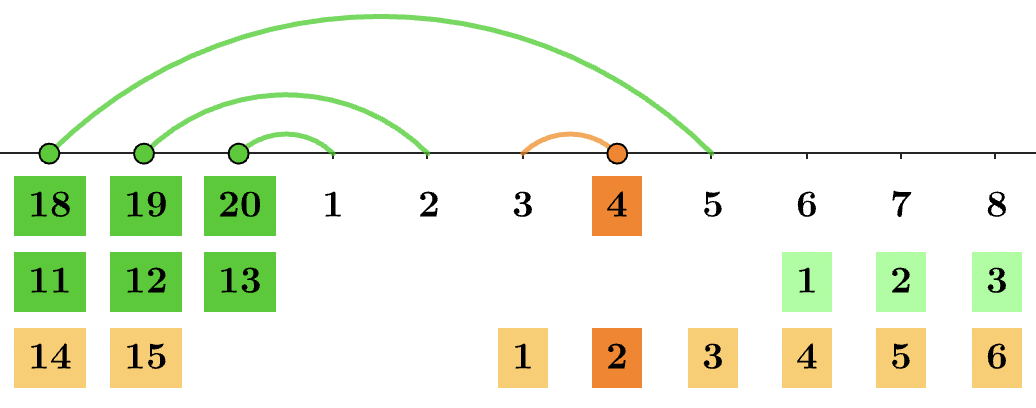}}
        \hspace{1cm}
        \begin{ytableau}
            1 & 2 & \cdots & \cdots & \cdots & \cdots \\
            *(orange) 4 & \cdots & *(green) 18 & *(green) 19 & *(green) 20
        \end{ytableau} \\
        \raisebox{-.5\height}{\includegraphics[width=8.25cm]{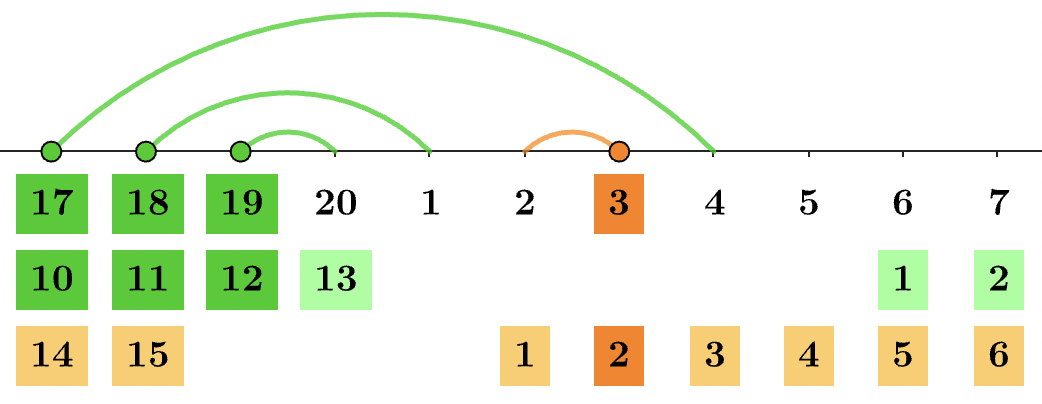}}
        \hspace{1cm}
        \begin{ytableau}
            1 & 2 & \cdots & \cdots & \cdots & \cdots \\
            *(orange) 3 & \cdots & *(green) 17 & *(green) 18 & *(green) 19
        \end{ytableau}
    \end{center}
    In the following step, still nothing gets promoted to the top row, but now the first track continues moving and the second track also resumes moving.
    \begin{center}
        \raisebox{-.5\height}{\includegraphics[width=8.25cm]{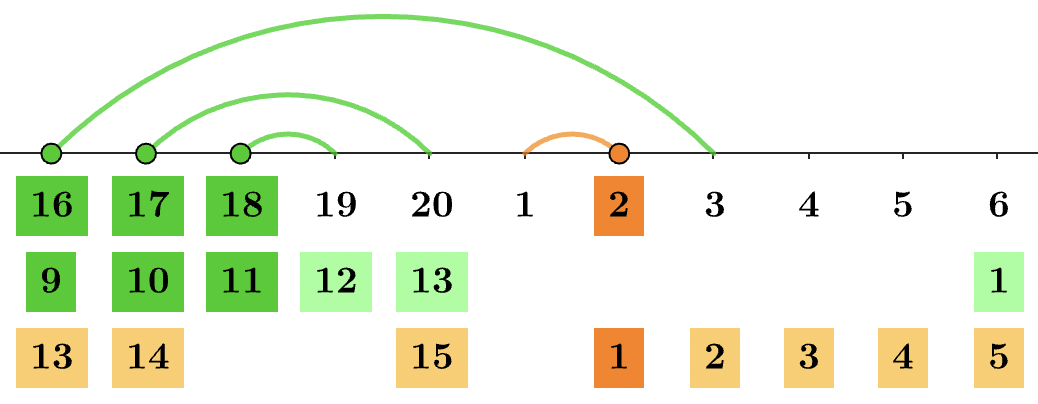}}
        \hspace{1cm}
        \begin{ytableau}
            1 & 3 & \cdots & \cdots & \cdots & \cdots \\
            *(orange) 2 & \cdots & *(green) 16 & *(green) 17 & *(green) 18
        \end{ytableau}
    \end{center}
    Then the 2 gets promoted, so during that step and the following step (where nothing gets promoted into the top row), the second track continues moving while the first track holds still:
    \begin{center}
        \raisebox{-.5\height}{\includegraphics[width=8.25cm]{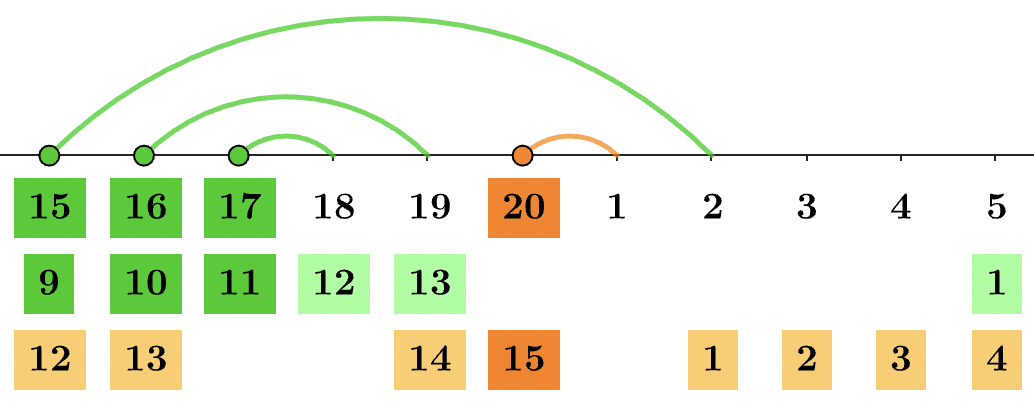}}
        \hspace{1cm}
        \begin{ytableau}
            1 & \cdots & \cdots & \cdots & \cdots & \cdots \\
            \dots & *(green) 15 & *(green) 16 & *(green) 17 & *(orange) 20
        \end{ytableau} \\
        \raisebox{-.5\height}{\includegraphics[width=8.25cm]{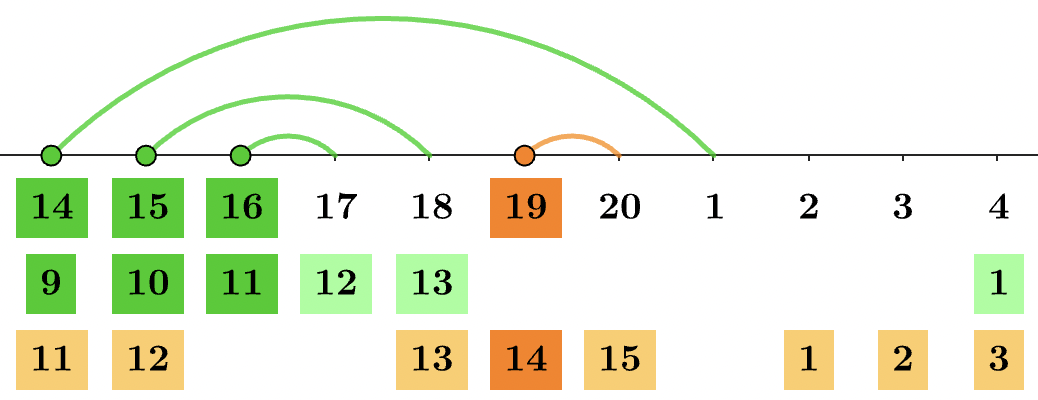}}
        \hspace{1cm}
        \begin{ytableau}
            1 & \cdots & \cdots & \cdots & \cdots & \cdots \\
            \dots & *(green) 14 & *(green) 15 & *(green) 16 & *(orange) 19
        \end{ytableau}
    \end{center}
    After that, both tracks resume moving:
    \begin{center}
        \raisebox{-.5\height}{\includegraphics[width=8.25cm]{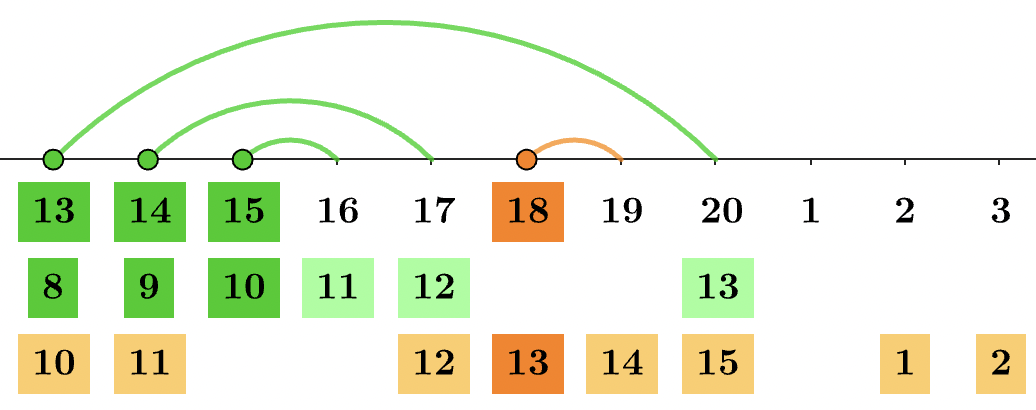}}
        \hspace{1cm}
        \begin{ytableau}
            1 & \cdots & \cdots & \cdots & \cdots & \cdots \\
            \dots & *(green) 13 & *(green) 14 & *(green) 15 & *(orange) 18
        \end{ytableau}
    \end{center}
    We can see that at the start, the runs were on top each other, but now the longer and faster-moving run has pulled ahead so that they no longer overlap.
\end{example}

\subsection{Recursive formula for the 2-row orbit length polynomials}\label{subsec:2row_recursive}

\begin{prop}\label{prop:recursion}
    $P_1(n,\ell,r) = n-\ell$, and for $d>1,$ $P_d(n,\vec{\ell},\vec{r})$ is a monic polynomial in $n$ of degree $d$. In particular, if $\vec{n}=(n_1,\dots,n_d)$, then $P_d(n,\vec{\ell},\vec{r})$ is actually a special case of a more general polynomial $P_d(\vec{n},\vec{\ell},\vec{r})$, which can can be recursively computed by $P_1(n_1,\ell,r) = n_1$ and
    \begin{align*}
        P_d(\vec{n},\vec{\ell},\vec{r}) = \prod_{i=1}^d n_i - 4\cdot\sum_{1\le i<j\le d}r_ir_j \ell_j^2 \cdot P_{i-1}(\vec{n}(i),\vec{\ell}(i),\vec{r}(i))\cdot \prod_{i<k<j}(n_k-2r_k\ell_k)\cdot\prod_{k>j}n_k,
    \end{align*}
    where for $1\le k\le i-1,$ $$\vec{n}(i) := (n_1 - 2r_1\ell_i,\dots,n_{i-1}-2r_{i-1}\ell_i),\hspace{0.5cm}\vec{\ell}(i) := (\ell_1-\ell_i,\dots,\ell_{i-1}-\ell_i),\hspace{0.5cm}\vec{r}(i) := (r_1,\dots,r_{i-1}).$$
\end{prop}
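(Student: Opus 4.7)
The plan is to prove the recursion by induction on $d$. The base case $d=1$ is immediate: a single track has $n_1$ possible rotations of its designated first run, and there is no bottleneck condition to impose, so $P_1(n_1,\ell,r) = n_1$ as claimed.

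For the inductive step, I would start from the free count $\prod_{i=1}^d n_i$, which enumerates all position sequences $(p_1,\dots,p_d)$ with $p_i \in \{1,\dots,n_i\}$ when the bottleneck condition is ignored. From this, one subtracts the number of ``bad'' sequences that actually violate the bottleneck condition of Theorem \ref{thm:2row} for some pair of tracks. The crux of the argument is to classify bad sequences by a canonical offending pair $(i,j)$ with $i<j$ so that each bad sequence is counted exactly once without any inclusion-exclusion. A natural canonical choice is to let $j$ be the largest index such that track $j$ has a run whose last dot lies in the width-$2\ell_j$ window around the boundary, and then let $i$ be the smallest index with $i<j$ such that track $i$ also has a run ending in the corresponding boundary window.

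For a fixed canonical pair $(i,j)$, the number of contributing bad configurations factors cleanly. There are $r_i r_j$ ways to choose the specific conflicting length-$\ell_i$ and length-$\ell_j$ runs, and then $(2\ell_j)^2 = 4\ell_j^2$ ways to place their last dots inside the two boundary windows. Tracks $k>j$ have strictly shorter runs and are unconstrained by the canonicality (any further conflict they would create would be assigned a different canonical pair), giving a factor $n_k$ each. Tracks $k$ with $i<k<j$ must have no run of length $\ell_k$ in its own boundary window (otherwise the canonicality of $j$ or $i$ would fail), so each of its $r_k$ runs must avoid a length-$2\ell_k$ region of its track, leaving $n_k-2r_k\ell_k$ admissible positions. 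Finally, tracks $k<i$ with longer runs are constrained both among themselves and against the already-fixed near-boundary runs on tracks $i$ and $j$; careful bookkeeping of the effective bottleneck windows these longer-run tracks see should reduce their joint count to $P_{i-1}$ evaluated at the shifted parameters $\vec{n}(i)$, $\vec{\ell}(i)$, $\vec{r}(i)$, where the reductions $n_k-2r_k\ell_i$ and $\ell_k-\ell_i$ precisely encode the obstruction coming from the length-$\ell_i$ runs on track $i$.

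The main obstacle, and what has to be done most carefully, is verifying that this reduction to $P_{i-1}(\vec{n}(i),\vec{\ell}(i),\vec{r}(i))$ is legitimate: one must check that the residual constraints on tracks $1,\ldots,i-1$, once the boundary behavior of tracks $i$ and $j$ has been fixed, are in bijection with the valid position sequences of a genuine smaller instance of the same counting problem with the stated shifted parameters. Once this step is in place, the monicity and degree claims follow automatically by induction: the main term $\prod_{i=1}^d n_i$ is monic of degree $d$ in $n$ since each $n_i = n-O(1)$, while each subtracted summand has degree $(i-1)+(j-i-1)+(d-j) = d-2$ in $n$ by the inductive degree bound on $P_{i-1}$, so the correction terms are of strictly smaller degree than the leading term and do not affect the leading coefficient.
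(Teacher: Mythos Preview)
Your overall strategy matches the paper's: start from the free count $\prod_i n_i$, then subtract bad position sequences, partitioning them according to a canonical offending pair $(i,j)$ so that no inclusion--exclusion is needed. The degree and monicity argument at the end is also exactly what the paper does.

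However, there is a real gap in your choice of canonical pair. You take $j$ to be the \emph{largest} index whose track has a run in its own boundary window, and then $i$ the smallest index conflicting with it. With this choice, your claim that tracks $k>j$ are ``unconstrained'' is false: if some $k>j$ had a run in its $2\ell_k$-window together with a longer-run track in the same window, the canonical $j$ would have to be at least $k$, contradicting the choice of $j$. So for the canonical pair to be $(i,j)$, tracks $k>j$ are in fact \emph{constrained} to avoid all such conflicts, and the factor is not simply $n_k$. Your parenthetical reasoning (``any further conflict they would create would be assigned a different canonical pair'') cuts exactly the wrong way: being assigned elsewhere means it must be \emph{excluded} here, not allowed freely. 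Similarly, your justification for the middle-track factor $n_k-2r_k\ell_k$ does not follow from your canonicality condition as stated.

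The paper instead takes $(i,j)$ to be the \emph{lexicographically smallest} conflicting pair: first minimize $i$, then minimize $j$. With that choice your three-part analysis becomes correct. Tracks $k>j$ really are unconstrained, because any extra conflict they create has first coordinate $\ge i$ and hence does not produce a lex-smaller pair. Tracks $i<k<j$ must have no run in their own $\ell_k$-window, since track $i$ is already in the smaller $\ell_j$-window and hence in the $\ell_k$-window, so $(i,k)$ would be a lex-smaller conflict. Tracks $k<i$ must avoid the $\ell_i$-window (else $(k,i)$ is lex-smaller) and must not conflict among themselves; after shrinking each such track by $2r_k\ell_i$ and each run length by $\ell_i$, the residual constraints are precisely those of a $(i-1)$-track instance, giving the $P_{i-1}(\vec n(i),\vec\ell(i),\vec r(i))$ factor. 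In short, your analysis is the right one, but it only matches the lex-smallest canonical pair, not the ``largest $j$'' rule you wrote down.
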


\begin{proof}
    The initial $\prod_{i=1}^d n_i$ term represents all ways to independently choose a rotation for each of the $d$ tracks, since there are $n_i$ possible rotations for the $i^{\text{th}}$ track. Then we need to subtract all cases where two or more tracks both have one of their runs ending too close to the boundary at the same time. We use casework on the first pair tracks $(i,j)$ for which this happens, with $i<j$. By ``first two tracks," we mean that $i$ is chosen to be minimal, and then $j$ is chosen to be minimal given $i$. Now we break the calculation for the number of illegal rotations in that case into four parts:
    \begin{itemize}
        \item There are $r_i$ ways to choose the run that is too close to the boundary on the $i^{\text{th}}$ track, and $r_j$ ways to choose the run too close to the boundary on the $j^{\text{th}}$ track. Then since $i<j,$ we have $\ell_i>\ell_j$, so there are $2\min(\ell_i,\ell_j)=2\ell_j$ ways to choose the position of the relevant run on each track. This gives us $r_ir_j\cdot(2\ell_j)^2 = 4r_ir_j\ell_j^2$ ways to choose the illegal rotations for the $i^{\text{th}}$ and $j^{\text{th}}$ tracks themselves.
        
        \item For each $k<i$, the $k^{\text{th}}$ track cannot have any run ending within $\ell_i$ steps of the boundary on either side, or else $(i,j)$ would not be the earliest pair of tracks with a bottleneck issue, because $(k,i)$ would be an earlier pair. Since there are $r_k$ runs on the $k^{\text{th}}$ track, this means $r_k\cdot 2\ell_i$ of the $n_k$ rotations for the $k^{\text{th}}$ track are not allowed, so we can essentially think of the $k^{\text{th}}$ track as instead having length $n_k-2r_k\ell_i.$ We can visualize this more concretely by imagining a rotation as being determined by the dot or space at position 1 on the track. Then we should remove the last $\ell_i$ dots of each run and the next $\ell_i$ spaces after them on each track, since those are the dots and spaces which cannot be in position 1.
        
        We also need to ensure that no pair $(k,m)$ with $k<m<i$ creates a bottleneck issue, which means we must ensure that we do not have one of the $r_k$ runs on the $k^{\text{th}}$ track and one of the $r_m$ runs on the $m^{\text{th}}$ track both ending within $\min(\ell_k,\ell_m)=\ell_m$ steps of the boundary. For our shortened tracks, this is equivalent to saying that we cannot have both a run on the $k^{\text{th}}$ track and a run on the $m^{\text{th}}$ track both ending within $\ell_m-\ell_i$ steps of the boundary, since now the final $\ell_m-\ell_i$ remaining dots of each shortened run (with its last $\ell_i$ dots removed) together with the next $\ell_m-\ell_i$ dots after the $\ell_i$ removed spaces following the run are not allowed to be at position 1 on the track. 
        
        But this is now exactly equivalent to the track rotations counted by the polynomial $P_{i-1},$ except that the new shortened track lengths are $\vec{n}(i)=(n_1-2r_1\ell_i,\dots,n_{i-1}-2r_{i-1}\ell_i)$, and the new shortened run lengths are $\vec{\ell}(i)=(\ell_1-\ell_i,\dots,\ell_{i-1}-\ell_i)$ since we have removed $\ell_i$ dots from each run. The list of run counts $\vec{r}(i)=(r_1,\dots,r_{i-1})$ is the same as $\vec{r}$ except that we cut it off after the first $i-1$ tracks. Putting this together, we see that the number of possible rotations for the first $i-1$ tracks in this case is $P_{i-1}(\vec{n}(i),\vec{\ell}(i),\vec{r}(i)).$
        
        \item For each $i<k<j$, the $k^{\text{th}}$ track cannot have any of its $r_k$ runs ending within $\ell_k$ steps of its track boundary, or else $(i,k)$ would be a smaller pair with a bottleneck issue than $(i,j),$ since then the $i^{\text{th}}$ and $k^{\text{th}}$ tracks would both have runs ending within $\ell_k$ steps of the boundary, since the $i^{\text{th}}$ track has a run ending within $\ell_j<\ell_k$ steps of the boundary. Thus, there are only $n_k-2r_k\ell_k$ legal rotations for the $k^{\text{th}}$ track. This is the only restriction needed, because as long as the $k^{\text{th}}$ track has no runs ending within $\ell_k$ steps of the boundary, it cannot be involved in any bottleneck issues. Thus, there are $\prod_{i<k<j}(n_k-2r_k\ell_k)$ ways to choose the rotations for all the tracks between the $i^{\text{th}}$ and $j^{\text{th}}$ tracks.
        
        \item For each $k>j,$ any rotation of the $k^{\text{th}}$ track is allowed, since even if multiple of those tracks are too close to their boundaries, it would not contradict the minimality of the pair $(i,j)$. Thus, all $\prod_{k>j}n_k$ possible rotations for these tracks are possible.
    \end{itemize}
    Putting all this together proves the recursive formula in Proposition \ref{prop:recursion}. Then since each $n_i$ is a monic linear polynomial in $n$, the product $\prod_{i=1}^d n_i$ is a monic degree $d$ polynomial in $n$. Each subtracted term has degree $d-2$ in $n$ (since the $(i,j)$ term essentially has a linear factor for each $k\ne i,j$), so those terms do not impact the overall leading coefficient. Thus, if we treat $\vec{\ell}$ and $\vec{r}$ as constants and $n$ as the variable, $P_d(n,\vec{\ell},\vec{r})$ is a monic polynomial in $n$ of degree $d$, as claimed.
    \end{proof}

To end this section, we show how to use Proposition \ref{prop:recursion} to explicitly compute the first few polynomials:

\begin{example}
    For $n = 1,$ we have $P_1(n,\ell,r) = n_1 = n-\ell,$ which we can think of as saying that all $n-\ell = n_1$ rotations of the single track are valid. For $n=2,$ we get 
    \begin{align*}
        P_2(n,\vec{\ell},\vec{r}) &= n_1n_2 - 4r_1 r_2\ell_2^2 \\
        &= (n - \ell_1 - 2r_2\ell_2)(n-\ell_2-2r_1\ell_2)-4r_1 r_2 \ell_2^2 \\
        &= n^2 - (\ell_1+\ell_2 + 2r_2\ell_2 + 2r_1\ell_2)n + \ell_1\ell_2 + 2r_1 \ell_1 \ell_2 + 2r_2 \ell_2^2.
    \end{align*} 
    To understand where the $n_1n_2 - 4r_2 r_2\ell_2^2$ comes from, we can imagine starting with all $n_1 n_2$ combinations of one of the $n_1$ rotations of the first track with one of the $n_2$ rotations of the second track. Then for each of the $r_1r_2$ pairs consisting of a run on the first track and a run on the second track, we subtract the $2\ell_2\cdot 2\ell_2 = 4\ell_2^2$ rotations where both runs end within $\ell_2$ positions of the boundary. For $n=3,$ Proposition \ref{prop:recursion} gives
    \begin{align*}
        P_3(n,\vec{\ell},\vec{r}) &= n_1n_2n_3 - 4r_1r_2\ell_2^2 \cdot n_3 - 4r_1r_3\ell_3^2\cdot (n_2 - 2r_2\ell_2) - 4r_2r_3\ell_3^2\cdot P_1(n_1 - 2r_1\ell_2,\ell_2-\ell_2,r_1) \\
        &= n_1n_2n_3 - 4r_1r_2\ell_2^2 \cdot n_3 - 4r_1r_3\ell_3^2\cdot (n_2 - 2r_2\ell_2) - 4r_2r_3\ell_3^2\cdot (n_1 - 2r_1\ell_2) \\
        &= (n-\ell_1 - 2r_2\ell_2 - 2r_3\ell_2)(n-\ell_2 - 2r_1\ell_2 - 2r_3\ell_3)(n-\ell_3 - 2r_1\ell_3 - 2r_2\ell_3) \\
        & \hspace{0.5cm}- 4r_1r_2\ell_2^2\cdot (n-\ell_3 - 2r_1\ell_3 - 2r_2\ell_3) \\
        &\hspace{0.5cm}- 4r_1r_3\ell_3^2\cdot (n-\ell_2 - 2r_1\ell_2 - 2r_3\ell_3 - 2r_2\ell_2) \\
        &\hspace{0.5cm}- 4r_2r_3 \ell_3^2 \cdot (n-\ell_1 - 2r_2\ell_2 - 2r_3\ell_3 - 2r_1\ell_2).
    \end{align*}
    Similarly, for $n=4,$ we get
    \begin{align*}
        P_4(n,\vec{\ell},\vec{r}) &= n_1 n_2 n_3 n_4 - 4r_1r_2\ell_2^2 \cdot n_3 n_4 - 4r_1r_3\ell_3^2\cdot (n_2-2r_2\ell_2)n_4 - 4r_1r_4\cdot (n_2 - 2r_2\ell_2)(n_3 - 2r_3\ell_3) \\
        &\hspace{2.2cm}-4r_2r_3\ell_3^2\cdot(n_1-2r_1\ell_2)n_4 - 4r_2r_4\cdot(n_1-2r_1\ell_2)(n_3-2r_3\ell_3) \\
        &\hspace{2.2cm}-4r_3r_4\cdot P_2((n_1-2r_1\ell_3,n_2-2r_2\ell_3),(\ell_1-\ell_3,\ell_2-\ell_3),(r_1,r_2)) \\
        &= n_1 n_2 n_3 n_4 - 4r_1r_2\ell_2^2 \cdot n_3 n_4 - 4r_1r_3\ell_3^2\cdot (n_2-2r_2\ell_2)n_4 - 4r_1r_4\cdot (n_2 - 2r_2\ell_2)(n_3 - 2r_3\ell_3) \\
        &\hspace{2.2cm}-4r_2r_3\ell_3^2\cdot(n_1-2r_1\ell_2)n_4 - 4r_2r_4\ell_4^2\cdot(n_1-2r_1\ell_2)(n_3-2r_3\ell_3)\\
        &\hspace{2.2cm}-4r_3r_4\ell_4^2\cdot((n_1-2r_1\ell_3)(n_2-2r_3\ell_3)-4r_1r_2(\ell_2-\ell_3)^2).
    \end{align*}
\end{example}


\section{Orbit lengths for near-hooks}\label{sec:hook_plus_box}

In this section, we characterize the orbits when $T[n]$ consists of a hook plus a single extra box in the second row, which we will call a \emph{\tb{\tcb{near-hook}}}. In \S\ref{subsec:hook_box_linear}, we describe the orbits whose lengths divide linear polynomials in $n$, which are either the orbits from Theorem \ref{thm:generic} where all entries of $T$ differ by at least 2, or orbits where all entries of $T$ belong to runs of 2 or more consecutive numbers (regardless of the exact run lengths). Then in \S\ref{subsec:hook_box_quadratic}, we show that the remaining orbit lengths divide certain quadratic polynomials in $n$, and we give an explicitly formula for those polynomials in terms of the number $r$ of runs of 2 or more consecutive numbers in $T$, and the number $s$ of singleton entries of $T$ not in a run.


\subsection{Linear orbit lengths for near-hooks}\label{subsec:hook_box_linear}

The first type of linear orbits we get for near-hooks are the ones from the generic case in Theorem \ref{thm:generic}:

\begin{prop}\label{prop:hook_box_linear_1}
    If $T[n]$ is a near-hook and all entries of $T$ differ by at least 2, and 2 and $n$ are not both entries of $T$, then $\pr(T[n])$ divides $(|T|-1)\cdot(n-1)$.
\end{prop}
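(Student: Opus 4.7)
The plan is to apply Theorem \ref{thm:generic} directly after computing $\pr(T)$ explicitly. Since $T[n]$ is a near-hook, its shape is $(n-|T|,2,1,\dots,1)$, so removing the top row shows that $T$ itself has shape $(2,1^{|T|-2})$. Because $\pr(T)=\pr(\SYT(T))$, it suffices to determine the promotion order of the SYT of this shape.

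First I would observe that an SYT $S$ of shape $(2,1^{|T|-2})$ is determined by the entry $j\in\{2,3,\dots,|T|\}$ placed in position $(1,2)$, since the remaining entries $\{2,\dots,|T|\}\setminus\{j\}$ must fill the first column below $(1,1)$ in increasing order. Call this SYT $S_j$. A direct jeu-de-taquin computation then shows that $\mc{P}(S_2)=S_{|T|}$: after removing the $1$ and sliding the $2$ up into $(1,1)$, the empty box sits at $(1,2)$ with no box below or to its right, so decrementing and dropping $|T|$ into $(1,2)$ yields $S_{|T|}$. For $j\ge 3$, after removing the $1$ the empty box slides all the way down the first column (since each entry below $j$ in the column is smaller than $j$), the first-column entries shift up by one, and after decrementing and placing $|T|$ at the bottom the new entry in $(1,2)$ becomes $j-1$, so $\mc{P}(S_j)=S_{j-1}$. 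Hence the promotion action is a single cycle
\[
S_2\to S_{|T|}\to S_{|T|-1}\to \cdots \to S_3\to S_2
\]
of length $|T|-1$, so $\pr(T)=|T|-1$.

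Since $|T|$ and $|T|-1$ are coprime, $\lcm(|T|,|T|-1)=|T|(|T|-1)$, and Theorem \ref{thm:generic} (whose hypotheses match those of the proposition verbatim) immediately gives
\[
\pr(T[n])\ \Big|\ \frac{\lcm(|T|,\pr(T))}{|T|}\cdot(n-1)\ =\ (|T|-1)(n-1),
\]
which is the claim. The only non-routine step is identifying the promotion cycle on hook tableaux of shape $(2,1^{m-1})$; this is the main obstacle but amounts to a short case analysis on the value in position $(1,2)$, after which the proposition follows as a one-line corollary of Theorem \ref{thm:generic}.
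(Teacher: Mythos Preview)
Your proposal is correct and follows essentially the same approach as the paper: both arguments reduce to Theorem \ref{thm:generic} after computing $\pr(T)=|T|-1$ by observing that promotion on an SYT of shape $(2,1^{|T|-2})$ simply cycles the entry in position $(1,2)$ through the values $2,3,\dots,|T|$. Your jeu-de-taquin case analysis for $S_2$ versus $S_j$ with $j\ge 3$ is a slightly more explicit version of the paper's one-line description of this cycle.
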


\begin{proof}
    From Theorem \ref{thm:generic}, $\pr(T[n])$ divides $$\frac{\lcm(\pr(T),|T|)}{|T|}\cdot(n-1).$$ We will show that $\pr(T)=|T|-1$, and hence $\lcm(\pr(T),|T|)/|T|=|T|-1.$ Recall that $\pr(T)=\pr(\SYT(T))$, so it suffices to check that $\pr(\SYT(T))=1$. Since $T[n]$ is a near-hook, $T$ has shape $21^{|T|-1}$, and there are $f^{21^{|T|-1}}=|T|-1$ SYT of that shape, since there are $|T|-1$ choices for the top right entry. These $|T|-1$ tableaux are all in the same promotion orbit, because the top right entry decreases by 1 at each promotion step, except that once it reaches 2, it skips to $|T|$ in the next step. Thus, the orbit has length $\pr(T)=\pr(\SYT(T))=|T|-1$, so the orbit length $\pr(T[n])$ is as claimed.
\end{proof}

The other linear case we get is when all the entries of $T$ belong to runs of length more than 1. We will first describe exactly what we mean by that:

\begin{definition}
    Write $\T_{\tn{near-hook}}(n,r)$ for the set of near-hook tableaux of size $n$ such that the entries of $T$ can be partitioned into exactly $r$ runs of 2 or more consecutive numbers, including possibly a run that wraps around from $n$ to 2, and such that within the first column of $T$, the mod $n-1$ difference between the last number of each run and the first number of the next is always at least 3. 
\end{definition}

Note that the condition on the first column forces the top right number of $T$ to be either the last or first entry in a run, not some middle entry. Note also that the difference between the last entry of one run and the first entry of the next may be 2 if one of those entries is the top right entry of $T$, since the condition on the differences only applies to the last and first entries of the runs within the first column.

\begin{prop}\label{prop:hook_linear_closed}
    $\T_{\tn{near-hook}}(n,r)$ is closed under promotion, and the gaps within the first column between the last entry of each run and the first entry of the next run stay the same throughout the orbit.
\end{prop}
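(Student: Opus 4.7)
The plan is to analyze a single promotion step and show that it preserves both membership in $\T_{\tn{near-hook}}(n,r)$ and each first-column gap between consecutive runs. I would split into two cases: $2 \notin T$ and $2 \in T$. If $2 \notin T$, the $2$ sits at $(1,2)$, so promotion merely pushes the hole across row $1$; every entry of $T$ stays put and is decremented by $1$ in the relabeling step, so the run structure and all first-column gaps shift uniformly by $-1$ and are trivially preserved.

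If $2 \in T$, then $(2,1) = 2$ slides up to $(1,1)$ and jeu-de-taquin continues within $T$. Either $(3,1) < (2,2)$, in which case the hole cascades down the first column and $n$ is placed at $(|T|, 1)$ while $(2,2)$ stays put (Sub-case 2b); or $(2,2) < (3,1)$, in which case the entry at $(2,2)$ slides left to $(2,1)$ and $n$ is placed at $(2,2)$ (Sub-case 2a). The unifying fact I would exploit is that in both sub-cases the multiset of entries of $T$ transforms by the single cyclic rule ``$2 \mapsto n$, every other $k \mapsto k - 1$'', which is exactly the shift by $-1$ on the cyclic set $\{2, 3, \ldots, n\}$ (in which $n$ and $2$ are considered adjacent, since $1$ always remains at $(1,1)$). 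This cyclic shift preserves the partition into maximal runs of consecutive integers (allowing wraparound), so $r$ and the multiset of run lengths are preserved, and the shifted image of any end-element of a run is itself an end-element of the shifted run.

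What remains is to verify that (a) the new $(2,2)$ still sits at an end of its run and (b) each first-column gap is literally unchanged. In Sub-case 2b, $(2,2)$ stays in place with value decremented, so it sits in the cyclically shifted version of its old run at the same end, while every first-column entry of every run also shifts by $-1$ cyclically, so fc-gaps are preserved term-by-term. In Sub-case 2a, the new $(2,2) = n$ is the cyclic image of the old $2$ and sits at the appropriate end of the cyclically shifted run $R^{*}$ that formerly contained $2$, while the old $(2,2)$ (which one first shows must be the first-of-run endpoint here, since the alternative contradicts $(2,2) < (3,1)$) slides into the first column to become the fc-first entry of its shifted run. Some individual fc-endpoint values change by $-2$ rather than $-1$, but a direct computation shows that the fc-last of the old $R^{*}$ and the fc-first of the next run change by the same amount mod $n-1$, so their difference --- the fc-gap --- is preserved mod $n-1$.

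The main obstacle is verifying the cancellation in Sub-case 2a: identifying which old-and-new runs play the roles of $R^{*}$ and its cyclic neighbor, and checking that the shift by $-1$ of entries interacts with the swap at $(2,1)$ and $(2,2)$ in exactly the right way so that both affected fc-endpoints change by the same amount mod $n-1$. Once this cancellation is verified, the lower bound $\geq 3$ on every fc-gap is automatically preserved (since each gap is unchanged), giving simultaneously the closure of $\T_{\tn{near-hook}}(n,r)$ under promotion and the stated invariance of the fc-gaps throughout the orbit.
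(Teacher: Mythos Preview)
Your case split is natural, but Case~1 ($2 \notin T$) is not as trivial as you claim. You assert that when $2$ sits at $(1,2)$ the hole ``merely pushes across row~1,'' but after the hole reaches $(1,2)$ you must compare $(1,3)$ with $(2,2)$, and it can happen that $(2,2) < (1,3)$. Concretely, take $n=10$ with $T$ given by $(2,1)=3$, $(2,2)=4$, $(3,1)=7$, $(4,1)=8$ (runs $\{3,4\}$ and $\{7,8\}$, with $4$ the last entry of its run at $(2,2)$). Then $(1,3)=5>4=(2,2)$, so $4$ slides up, $n$ is placed at $(2,2)$, and after decrementing the new $T$ is $\{2,6,7,10\}$. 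Here the multiset of $T$ does \emph{not} undergo your cyclic rule ``$2\mapsto n$, other $k\mapsto k-1$'': instead $4\mapsto n$ while $3,7,8$ decrement. So your unifying observation fails in exactly this sub-case, and you never analyze it.

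This missing sub-case is precisely what the paper treats as its ``other possibility'': an entry $\ell>2$ at $(2,2)$ gets promoted, which forces $3,\dots,\ell-1$ to lie in the first column, forces $\ell$ to be the \emph{last} element of its run, and turns the run $\{3,\dots,\ell\}$ into the wraparound run $\{n,2,\dots,\ell-2\}$. The first-column endpoints of this run go from $(3,\ell-1)$ to $(2,\ell-2)$, i.e.\ both shift by $-1$ just like every other run's fc-endpoints, so the fc-gaps are preserved. Once you add this third sub-case your argument goes through; your handling of $2\in T$ (Sub-cases~2a and~2b) is fine, and your claim that in Sub-case~2a the entry at $(2,2)$ must be a first-of-run endpoint is correct (the alternative would force $(2,2)=3$, which is impossible since $(1,2)$ would then have to equal $2=(2,1)$).
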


\begin{proof}
    For $n$ large, at the majority of promotion steps, all that happens is that either the entries of $T$ all decrease by 1. The interesting steps are when an entry of $T$ gets promoted into the first row. If that entry is a 2 in the top left corner of $T$, then the 2 becomes an $n$ next, which means that the run the 2 belonged to is still a run of the same length: if the 2 was the first number in the run, the run now becomes a wraparound run, if the 2 was somewhere in the middle of the run, it was already a wraparound run and it remains one, and if the 2 was the last number of the run, then it was a wraparound run but no longer is after the 2 gets promoted. In all three of those cases, all the run lengths are preserved, and the mod $n-1$ gaps between the runs are also preserved, because all entries of $T$ decrease by 1 mod $n-1$, including the 2 that becomes an $n$. Thus, the tableau stays in $\T_{\tn{near-hook}}(n,r)$ at those steps.

    The other possibility is that an entry larger than 2 gets promoted from the top right box of $T$ into the top row. If that entry is $\ell$, then the entries $2,3,\dots,\ell-1$ must all be either in the first column of $T$ or immediately above the $\ell$. For the $\ell$ to get promoted out of $T$ instead of a 2, the 2 must not be in $T$, so it must be above $\ell$, meaning $3,4,\dots,\ell-1$ are in the first column of $T$, and thus $\ell$ must be the largest element of its run. Then when the $\ell$ gets promoted, it gets replaced by an $n$ in the top right box of $T$, while all other entries of $T$ decrease by 1. The other entries in its run decrease by 1 to become $2,3,\dots,\ell-2,$ so the run is still a run of the same length, but has now become a wraparound run $n,2,3,\dots,\ell-2$. All other entries of $T$ simply decrease by 1, so the condition on the gaps between runs within the first column of $T$ still holds. Thus, the tableau stays in $\T_{\tn{near-hook}}(n,r)$.
\end{proof}

\begin{theorem}\label{thm:hook_box_linear_2}   
    for every $T[n]\in \T_{\tn{near-hook}}(n,r)$, the orbit length $\pr(T[n])$ divides $(2r-1)n - 2r.$
\end{theorem}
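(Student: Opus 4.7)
The plan is to extend the method used for Theorem \ref{thm:2row_linear}, viewing the entries of $T$ as occupying a circular track of length $n-1$ labeled by the values $\{2, 3, \ldots, n\}$, with the gap structure within the first column of $T$ fixed throughout the orbit by Proposition \ref{prop:hook_linear_closed}. The new feature compared to the 2-row case is the extra top-right slot, which can be thought of as a ``mobile'' box that attaches to whichever run currently owns it and can, during certain promotion steps, slide horizontally so as to transfer ownership from one run to another.

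First, I would classify promotion steps into four types based on the effect on $T$: (i) \emph{generic} steps, where every entry of $T$ simply decrements by $1$; (ii) \emph{top-right promotion} steps, where the top-right value slides up into the top row and is replaced by a new $n$; (iii) \emph{column-shift} steps, where a $2$ at the top-left of the column is promoted out and the remaining column entries slide up one position with a new $n$ appearing at the bottom of the column and the top-right untouched; and (iv) \emph{hybrid} steps, where a $2$ at the top-left is promoted out but the top-right slides horizontally into its position, with the new $n$ placed in the old top-right slot. Using the gap preservation from Proposition \ref{prop:hook_linear_closed}, I would verify that non-generic steps occur only when a specific run's boundary element is currently at the critical position, and then use the forced sequence of slides between non-generic steps to describe the dynamics on the circular track.

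Next I would count: each of the $r$ runs must, over the course of one full orbit, complete a full cyclic traversal of the track, and the top-right slot must return to its original configuration, meaning it is owned by the same run and sits at the same relative position within that run. Tracking the ``phase'' of one designated reference run, I would show that the base single-run orbit (as in the $r=1$ case) takes $n-2$ promotion steps, and that each additional run forces the top-right to undergo one extra ownership handoff per cycle, each such handoff contributing $2(n-1)$ additional steps. Adding these contributions gives
\[
\pr(T[n]) \mid (n-2) + (r-1)\cdot 2(n-1) = (2r-1)n - 2r.
\]

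The main technical obstacle is the hybrid step (iv), since this is the promotion step during which top-right ownership actually transfers between runs. Determining precisely which run is ``next in line'' for the top-right (in terms of the cyclic gap structure on the first column) and verifying that the accumulated count of non-generic steps over a full orbit comes out to exactly $(2r-1)n-2r$ requires careful bookkeeping analogous to the boundary-crossing analysis in the proof of Theorem \ref{thm:2row}. Proper divisors of $(2r-1)n - 2r$ will appear when the gap sequence in the first column has nontrivial rotational symmetry, analogous to the discussion following the proof of Theorem \ref{thm:generic}.
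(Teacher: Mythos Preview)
Your overall plan is the same as the paper's: follow the top-right entry as it cycles through being ``owned'' by each of the $r$ runs, and add up the durations. Your step classification (i)--(iv) matches the case analysis the paper actually does.

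The gap is in your accounting. You propose the decomposition $(n-2) + (r-1)\cdot 2(n-1)$ and justify it by saying each extra ownership handoff contributes $2(n-1)$ steps. That is only true \emph{on average}; the per-run contribution actually depends on the run. The paper's computation is cleaner: starting from the state where the top-right entry is $n$ and belongs to the $i$th run, the top-right entry first decreases from $n$ down to $\ell_i + 2$ (taking $n - \ell_i - 2$ steps), then is promoted and replaced by $n$, then decreases from $n$ down to $g_i + 1$ (taking $n - g_i - 1$ steps) before the hybrid step transfers ownership to the previous run. Hence the segment in which run $i$ owns the top-right lasts $2n - \ell_i - g_i - 1$ steps. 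The total is then
\[
\sum_{i=1}^r (2n - \ell_i - g_i - 1) = 2rn - \sum_{i=1}^r(\ell_i + g_i) - r,
\]
and the key lemma is the identity $\sum_{i=1}^r(\ell_i + g_i) = n + r$ (which follows because the first-column gaps and run lengths tile the mod $n-1$ circle, with the top-right entry double-counted once). This gives $(2r-1)n - 2r$ directly. Your formula $(n-2) + (r-1)\cdot 2(n-1)$ is algebraically equal to this, but you would not be able to justify the $2(n-1)$-per-handoff claim without first doing this per-run computation anyway, so the decomposition you propose is a detour rather than a shortcut.
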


\begin{proof}
    Label the runs in $T$ as the first, second, $\dots,r^{\text{th}}$ run in the order they appear in the column, and let their lengths be $\ell_1,\ell_2,\dots,\ell_r$. Note that $\ell_1,\dots,\ell_r$ are not necessarily in decreasing order, and they may or may not be equal to each other, since we allow the run lengths to take any values as long as they are all at least 2, and we are ordering them based on where the runs appear in $T$, not based on their lengths. Let $g_i$ be the mod $n-1$ difference between the last entry within the first column of the $(i-1)^{\text{st}}$ run and the first entry within the first column of the $i^{\text{th}}$ run, where $i$ is taken mod $r$, so $g_1$ is the gap between the last entry of the $r^{\text{th}}$ run and the first entry of the first run.

    \begin{lemma}
        The sum of the run lengths and gaps is $$\ell_1+\dots+\ell_r+g_1+\dots+g_r=n+r.$$
    \end{lemma}

    \begin{proof}
        If we start from the beginning of the first run, adding $\ell_1-1 \pmod{n-1}$ will take us to the end of the first run. Then adding $g_2\pmod{n-1}$ will take us to the start of the second run, adding $\ell_2-1\pmod{n-1}$ will take us to the end of the second run, and so on. Adding $\sum_{i=1}^r(\ell_i-1+g_i)\pmod{n-1}$ almost takes us back to the same number from the first column that we started at, so it would seem that the sum $\sum_{i=1}^r(\ell_i-1+g_i)$ should equal $n-1$, but actually it equals $n$ because the extra number in the top right box of $T$ gets double counted in both one of the $\ell_i-1$ terms and in one of the $g_i$ terms, since the gap containing the top right number is counted as the difference between the number before the top right number in the first column and the number after the top right number in the first column, rather than between the top right number itself and either the number before or after it. Thus, we have $\sum_{i=1}^r(\ell_i-1+g_i)=n+r,$ so $\sum_{i=1}^r(\ell_i+g_i)=n+r.$
    \end{proof}
    
    Now suppose we start the orbit with the first run containing the numbers $n-\ell_1+1,\dots,n-1$ at the bottom of $T$, together with $n$ in the top right corner of $T$. For the first portion of the orbit, all the entries in $T$ will just decrease by 1 at every step, except that when each entry of $T$ decreases to 2, it slides up and wraps around to become an $n$ at the bottom of the column, while all other entries in the column slide up. The top right box of $T$ just decreases by 1 at every step of this process, until we get to a point where that box has entry $\ell_1+2,$ and the entries at the top of the column of $T$ are the other entries $3,4,\dots,\ell_1+1$ from the same run:
    \begin{center}
    \ytableausetup{boxsize=1cm}
    \begin{ytableau}
        1 &  & \cdots \\
         \vdots & *(green) n \\
        *(green) \scriptstyle{n-\ell_1+1} \\
        *(green) \vdots \\
        *(green) n-1 \\
    \end{ytableau}
    $\longrightarrow$
    \begin{ytableau}
        1 &  & \cdots \\
         \vdots & *(green) n-1 \\
        *(green) \scriptstyle{n-\ell_1} \\
        *(green) \vdots \\
        *(green) n-2 \\
    \end{ytableau}
    $\longrightarrow \dots \longrightarrow$
    \begin{ytableau}
        1 &  & \cdots \\
        *(green) 3 & *(green) \ell_1+2 \\
        *(green) \vdots \\
        *(green) \ell_1+1 \\
        \vdots \\
    \end{ytableau}
    \end{center}
    At this point, the $\ell_1+2$ gets promoted next and gets replaced by an $n$ in the top right box of $T$. Then the remaining entries of the run are $2,3,\dots,\ell_1$ in the first column, so they slide up and get promoted one by one, with a 2 from the first run wrapping around to become an $n$ at the bottom of $T$ at each step. After they have all been promoted, the first run will have wrapped around so its entries are $n-\ell_1+1,\dots,n,$ with its \emph{smallest} entry (the $n-\ell_1+1$) in the top right box of $T$, and all its larger entries at the bottom of $T$. This portion of the process is summarized below, with the entries of the run in green:
    \begin{center}
        \ytableausetup{boxsize=1cm}
        \begin{ytableau}
        1 & 2 & \cdots \\
        *(green) 3 & *(green) \ell_1+2 \\
        *(green) 4 \\
        *(green) \vdots \\
        *(green) \ell_1+1 \\
        \vdots \\
    \end{ytableau}
    $\longrightarrow$
    \begin{ytableau}
        1 & \ell_1+1 & \cdots \\
        *(green) 2 & *(green) n \\
        *(green) 3 \\
        *(green) \vdots \\
        *(green) \ell_1 \\
        \vdots \\
    \end{ytableau}
    $\longrightarrow$
    \begin{ytableau}
        1 & \ell_1 & \cdots \\
        *(green) 2 & *(green) n-1 \\
        *(green) \vdots \\
        *(green) \ell_1-1 \\
        \vdots \\
        *(green) n
    \end{ytableau}
    $\longrightarrow\dots\longrightarrow$
    \begin{ytableau}
        1 &  & \cdots \\
         & *(green) \scriptstyle{n-\ell_1+1} \\
        \vdots \\
        *(green) \scriptstyle{n-\ell_1+2} \\
        *(green) \vdots \\
        *(green) n
    \end{ytableau}
    \end{center}
    Next, all the entries belonging to other runs in $T$ will simply slide up and wrap around from being a 2 at the top of $T$ to being an $n$ at the bottom of $T$, until we get to a point where the portion of the first run that is in the first column starts just below the top left box of $T$. Then the smallest entry of the first run will slide over to the left to become the top left entry of $T$, so the first run will be entirely in the first column of $T$. The first entry of the first run slides left at the step where the last entry of the $r^{\text{th}}$ run is a 2 and then wraps around to become an $n$ in the top right corner of $T$. Since the mod $n-1$ gap between the last entry of the $r^{\text{th}}$ run within the left column and the first entry of the first run within the left column must be $g_1$, the first entry of the first run within the column is $g_1+2$ at that step, so the entry in the top right box is $g_1+1$, and then is $g_1$ when it slides left. In the figure below, entries of the first run are still highlighted in green, while entries of the $r^{\text{th}}$ run are highlighted in orange:
    \begin{center}
    \begin{ytableau}
        1 & 2 & \cdots \\
         & *(green) \scriptstyle{n-\ell_1+1} \\
        \vdots \\
        *(green) \scriptstyle{n-\ell_1+2} \\
        *(green) \vdots \\
        *(green) n
    \end{ytableau}
    $\longrightarrow$
    \begin{ytableau}
        1 & 2 & \cdots \\
         & *(green) {n-\ell_1} \\
        \vdots \\
        *(green) \scriptstyle{n-\ell_1+1} \\
        *(green) \vdots \\
        *(green) n-1
    \end{ytableau}
    $\longrightarrow \dots \longrightarrow$
    \begin{ytableau}
        1 & 2 & \cdots \\
        *(orange) 2 & *(green) g_1+1 \\
        *(green) g_1+2 \\
         \vdots \\
         *(orange) \vdots \\
         *(orange) n
    \end{ytableau}
    $\longrightarrow$
    \begin{ytableau}
        1 & 2 & \cdots \\
        *(green) g_1 & *(orange) n \\
         \vdots \\
         *(orange) \scriptstyle{n-\ell_r+1} \\
         *(orange) \vdots \\
         *(orange) n-1
    \end{ytableau}
    \end{center}
    In our process so far, the top right entry of $T$ starts as $n$, then decreases down to $\ell_1+2$, which takes $n-\ell_1-1$ steps. Then it decreases from $n$ down to $g_1+1$, taking $n-g_1$ steps. Thus, we get a total of $2n-\ell_1-g_1-1$ steps where the top right number in $T$ belongs to the first run.

    After that, we are essentially back at the starting state, with the top right entry of $T$ again being an $n$ and the rest of its run being at the bottom of $T$, except that the $n$ is now the last entry of the $r^{\text{th}}$ run instead of the first run. Thus, the above process will repeat but with the $r^{\text{th}}$ run playing the role of the first run, taking $2n-\ell_r-g_r-1$ steps. Then the same thing will happen with the top right box belonging to the $(r-1)^{\text{st}}$ run, then to the $(r-2)^{\text{nd}}$ run, and so on. After we have looped through the states where the top right box of $T$ belongs to each run, we will be back at the starting state. Thus, the total orbit length is a divisor of $$\sum_{i=1}^r (2n-\ell_i-g_i-1) = r\cdot 2n - \sum_{i-1}^t(\ell_i+g_i) -r = 2rn - (n+r)-r = (2r-1)n-2r,$$ as claimed.
\end{proof}

\begin{example}
    For the tableau $T[9]$ shown below, we have 2 runs, of lengths 2 and 3, shown in green and orange. At first the green run has its largest entry in the top right, then it has its smallest entry in the top right, then the orange run has its largest entry in the top right, then its smallest. Since $r=2$, we get an orbit length of $3\cdot9-4 = 23.$
    \begin{center}
        \ytableausetup{boxsize=0.45cm}
        \begin{tabular}{ccc ccc cccc}
        & \begin{ytableau}
            1 & 2 & 5 & 6 \\
            *(orange) 3 & *(green) 9 \\
            *(orange) 4 \\
            *(green) 7 \\
            *(green) 8
        \end{ytableau}
        & $\longrightarrow$ &
        \begin{ytableau}
            1 & 4 & 5 & 9 \\
            *(orange) 2 & *(green) 8 \\
            *(orange) 3 \\
            *(green) 6 \\
            *(green) 7
        \end{ytableau}
        & $\longrightarrow$ &
        \begin{ytableau}
            1 & 3 & 4 & 8 \\
            *(orange) 2 & *(green) 7 \\
            *(green) 5 \\
            *(green) 6 \\
            *(orange) 9
        \end{ytableau}
        & $\longrightarrow$ &
        \begin{ytableau}
            1 & 2 & 3 & 7 \\
            *(green) 4 & *(green) 6 \\
            *(green) 5 \\
            *(orange) 8 \\
            *(orange) 9
        \end{ytableau}
        & $\longrightarrow$ &
        \begin{ytableau}
            1 & 2 & 6 & 9 \\
            *(green) 3 & *(green) 5 \\
            *(green) 4 \\
            *(orange) 7 \\
            *(orange) 8
        \end{ytableau} \\ \\

        $\longrightarrow$ &
        \begin{ytableau}
            1 & 4 & 5 & 8 \\
            *(green) 2 & *(green) 9 \\
            *(green) 3 \\
            *(orange) 6 \\
            *(orange) 7
        \end{ytableau}
        & $\longrightarrow$ &
        \begin{ytableau}
            1 & 3 & 4 & 7 \\
            *(green) 2 & *(green) 8 \\
            *(orange) 5 \\
            *(orange) 6 \\
            *(green) 9
        \end{ytableau}
        & $\longrightarrow$ &
        \begin{ytableau}
            1 & 3 & 4 & 7 \\
            *(orange) 4 & *(green) 7 \\
            *(orange) 5 \\
            *(green) 8 \\
            *(green) 9
        \end{ytableau}
        & $\longrightarrow$ &
        \begin{ytableau}
            1 & 2 & 5 & 9 \\
            *(orange) 3 & *(green) 6 \\
            *(orange) 4 \\
            *(green) 7 \\
            *(green) 8
        \end{ytableau}
        & $\longrightarrow$ &
        \begin{ytableau}
            1 & 4 & 8 & 9 \\
            *(orange) 2 & *(green) 5 \\
            *(orange) 3 \\
            *(green) 6 \\
            *(green) 7
        \end{ytableau} \\ \\

        $\longrightarrow$ &
        \begin{ytableau}
            1 & 3 & 7 & 8 \\
            *(orange) 2 & *(green) 4 \\
            *(green) 5 \\
            *(green) 6 \\
            *(orange) 9
        \end{ytableau}
        & $\longrightarrow$ &
        \begin{ytableau}
            1 & 2 & 6 & 7 \\
            *(green) 3 & *(orange) 9 \\
            *(green) 4 \\
            *(green) 5 \\
            *(orange) 8
        \end{ytableau}
        & $\longrightarrow$ &
        \begin{ytableau}
            1 & 5 & 6 & 9 \\
            *(green) 2 & *(orange) 8 \\
            *(green) 3 \\
            *(green) 4 \\
            *(orange) 7
        \end{ytableau}
        & $\longrightarrow$ &
        \begin{ytableau}
            1 & 4 & 5 & 8 \\
            *(green) 2 & *(orange) 7 \\
            *(green) 3 \\
            *(orange) 6 \\
            *(green) 9
        \end{ytableau}
        & $\longrightarrow$ &
        \begin{ytableau}
            1 & 4 & 5 & 8 \\
            *(green) 2 & *(orange) 6 \\
            *(orange) 5 \\
            *(green) 8 \\
            *(green) 9
        \end{ytableau} \\ \\

        $\longrightarrow$ &
        \begin{ytableau}
            1 & 2 & 3 & 6 \\
            *(orange) 4 & *(orange) 5 \\
            *(green) 7 \\
            *(green) 8 \\
            *(green) 9
        \end{ytableau}
        & $\longrightarrow$ &
        \begin{ytableau}
            1 & 2 & 5 & 9 \\
            *(orange) 3 & *(orange) 4 \\
            *(green) 6 \\
            *(green) 7 \\
            *(green) 8
        \end{ytableau}
        & $\longrightarrow$ &
        \begin{ytableau}
            1 & 3 & 4 & 8 \\
            *(orange) 2 & *(orange) 9 \\
            *(green) 5 \\
            *(green) 6 \\
            *(green) 7
        \end{ytableau}
        & $\longrightarrow$ &
        \begin{ytableau}
            1 & 3 & 4 & 8 \\
            *(green) 4 & *(orange) 8 \\
            *(green) 5 \\
            *(green) 6 \\
            *(orange) 9
        \end{ytableau}
        & $\longrightarrow$ &
        \begin{ytableau}
            1 & 2 & 6 & 9 \\
            *(green) 3 & *(orange) 7 \\
            *(green) 4 \\
            *(green) 5 \\
            *(orange) 8
        \end{ytableau} \\ \\

        $\longrightarrow$ &
        \begin{ytableau}
            1 & 5 & 8 & 9 \\
            *(green) 2 & *(orange) 6 \\
            *(green) 3 \\
            *(green) 4 \\
            *(orange) 7
        \end{ytableau}
        & $\longrightarrow$ &
        \begin{ytableau}
            1 & 5 & 8 & 9 \\
            *(green) 2 & *(orange) 5 \\
            *(green) 3 \\
            *(orange) 6 \\
            *(green) 9
        \end{ytableau}
        & $\longrightarrow$ &
        \begin{ytableau}
            1 & 3 & 6 & 7 \\
            *(green) 2 & *(orange) 4 \\
            *(orange) 5 \\
            *(green) 8 \\
            *(green) 9
        \end{ytableau} &&&
        \end{tabular}
    \end{center}
\end{example}

The reason why the orbit length might be a divisor of $(2r-1)n-2r$ instead of necessarily equaling $(2r-1)n-2r$ is exactly the same as for the 2-row linear case in \S\ref{sec:2row_linear}. Namely, the sequence of run lengths and gaps could have $d$-fold symmetry for some $d\mid r$, where the same lengths and gaps get repeated $r/d$ times, so that the length of the orbit gets divided by $d$.

\subsection{Quadratic orbit lengths for near-hooks}\label{subsec:hook_box_quadratic}

Like in \S\ref{subsec:run_lengths}, we will need to describe how to read off the singletons and the run lengths for an arbitrary near-hook shaped tableaux that includes both runs and singletons, since sometimes the singletons may get sandwiched inside the runs. However, the process will be simpler here than in the 2-row case, because singletons getting sandwiched inside runs is essentially the only kind of overlap that can happen in the near-hook case, since all runs will essentially move together at the same rate, while all singletons will move together at a different rate. 

We can identify these ``nested" singletons by looking for mod $n-1$ differences of 2 in the first column of $T$, which we said were not allowed in the runs-only case:
\begin{enumerate}
    \item If two consecutive numbers in the first column differ by 2 mod $n-1$, we count the number \emph{before} the gap as a singleton. If we have an $i-1,i,$ and $i+2$ in the column, then the $i$ is the singleton and the $i-1$ and $i+2$ belong to the same run. If there is also an $i+1$ in the top right box, then the $i+1$ also belongs to the same run as the $i-1$ and $i+2$. Multiple singletons in a row may be nested within the same run, with gaps of 2 between them.
    
    
    \item Any other number with a gap of at least 3 to all other numbers, or in the top right box with a gap of at least 2 to all other numbers, gets counted as a singleton, and any other gap of 3 or more between adjacent first-column numbers is a break between runs.
\end{enumerate}
Write $\T_{\tn{near-hook}}(n,r,s)$ for the set of near-hook tableaux with $s$ singletons and $r$ runs of length 2 or more.

\begin{example}
    In the tableau $T[16]$ below, the gaps are large enough that the runs are just the sequences of consecutive numbers not in the first row:
    \begin{center}
        \ytableausetup{boxsize=0.6cm}
        \begin{ytableau}
            1 & 2 & 3 & 6 & 7 & 8 & 12 & 13 & 14 & 16 \\
            *(green) 4 & *(orange)15 \\
            *(green) 5 \\
            *(yellow) 9 \\
            *(yellow) 10 \\
            *(yellow) 11 \\
        \end{ytableau}
    \end{center}
    The runs are $(4,5)$ and $(9,10,11)$, highlighted in green and yellow, while 15 is a singleton, highlighted in orange. Thus, $r=2$ and $s=1$.
\end{example}

\begin{example}\label{ex:near-hook}
    In the tableau $T[15]$ below, the runs and singletons are not quite what we might expect, since some singletons are nested inside runs. The singletons are 3, 9, and 15 (highlighted in orange), while the runs are $(14,4,5)$ and $(8,11)$, with the run $(14,4,5)$ highlighted in green and the run $(8,11)$ in yellow:
    \begin{center}
        \ytableausetup{boxsize=0.6cm}
        \begin{ytableau}
        1 & 2 & 6 & 7 & 10 & 12 & 13 \\
        *(orange) 3 & *(green) 4 \\
        *(green) 5 \\
        *(yellow) 8 \\
        *(orange) 9 \\
        *(yellow) 11 \\
        *(green) 14 \\
        *(orange) 15
        \end{ytableau}
    \end{center}
    Thus, $r=3$ and $s=2$. Note that the 15 counts as a nested singleton since $n-1=14$ and thus the gap from the 15 to the 3 is $3-15=-12\equiv 2\pmod{n-1}$.
\end{example}

\begin{theorem}\label{thm:hook_box_quadratic}
    $\T_{\tn{near-hook}}(n,r,s)$ is closed under promotion, and for every $T[n]\in \T_{\tn{near-hook}}(n,r,s),$ the orbit length $\pr(T[n])$ is a divisor of the quadratic polynomial
    $$(n-2r-1)\cdot((2r+s-1)(n-2s)-2r) \ - \ 2rs(4r+2s-3).$$
\end{theorem}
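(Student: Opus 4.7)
My plan is to mirror the two-track framework of Section~\ref{sec:2row}: one track will carry the $r$ runs of length at least $2$, the other will carry the $s$ singletons, and the quadratic polynomial will emerge as the product of two cycle lengths minus a correction counting simultaneous boundary crossings.

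The first step will be to prove that $\T_{\tn{near-hook}}(n,r,s)$ is closed under promotion. At steps where no entry of $T$ moves into the top row, every entry of $T$ decreases by $1$, so the mod $n{-}1$ gaps inside the column and those involving the top-right box are preserved, and both $r$ and $s$ stay constant. The two boundary-crossing cases already handled in Proposition~\ref{prop:hook_linear_closed}---a $2$ at the top-left being promoted, or a larger entry being promoted out of the top-right---carry over verbatim: the run or singleton containing the promoted entry retains its length, and all other entries just shift by $1$. The one genuinely new situation is a singleton nested inside a run, which I would handle by checking that the detection rule (a mod $n{-}1$ gap of $2$ in the column, or the analogous configuration involving the top-right box) continues to identify exactly $s$ singletons and $r$ runs across the intermediate steps when a run crosses the boundary with a nested singleton inside it.

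Next, I would argue by the same method as Theorem~\ref{thm:2row} that the $r$ runs keep their cyclic gap sequence along the column and the $s$ singletons keep theirs, with the two tracks evolving almost independently and each pausing while the other crosses the boundary. This produces two cycle lengths. The singleton track has effective length $n - 2r - 1$: the $n - 1$ singleton-rotations of the all-singleton case underlying Proposition~\ref{prop:hook_box_linear_1}, reduced by $2r$ positions blocked by runs at the boundary. Over one full runs-cycle, the runs contribute $(2r+s-1)(n - 2s) - 2r$ active steps, obtained from the runs-only period $(2r-1)n - 2r$ of Theorem~\ref{thm:hook_box_linear_2} by replacing $n$ with $n - 2s$ (for the singleton-blocked positions) and then adding $s(n - 2s)$ for the $s$ extra singleton-crossing pauses per cycle. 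Multiplying yields the joint count $(n-2r-1)\bigl((2r+s-1)(n-2s) - 2r\bigr)$ of pairs (runs rotation, singletons rotation).

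Joint configurations in which one of the $r$ runs and one of the $s$ singletons are at the boundary at the same step are double-counted in this product. Computing the overcount carefully and summing over all $rs$ pairs will give the correction $2rs(4r+2s-3)$, and subtracting produces the claimed quadratic bound; divisibility rather than equality then follows from the usual rotational-symmetry argument used in Theorems~\ref{thm:2row_linear} and~\ref{thm:2row}. I expect the main obstacle to be pinning down the factor $4r + 2s - 3$ per (run, singleton) pair. This requires a careful case analysis of the relative trajectories of a run and a singleton through a boundary crossing, including the subcases where the singleton is nested inside the run, where the singleton's promotion step coincides with the moment the run's smallest entry becomes $n$, and where the two are adjacent in the column but not nested. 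The delicate part of the proof will be this combinatorial bookkeeping, not the overall two-track structure.
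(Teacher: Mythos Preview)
Your outline matches the paper's proof essentially line for line: the same two-track decomposition (singleton track of length $n-2r-1$; run track with $(2r+s-1)(n-2s)-2r$ states, derived exactly as you describe from the runs-only count of Theorem~\ref{thm:hook_box_linear_2} with $n\mapsto n-2s$ plus the $s(n-2s)$ singleton states), the same product-minus-correction structure, and the same divisibility argument via equal-length orbits in each gap class. One small conceptual correction: the term $2rs(4r+2s-3)$ does not come from \emph{double-counting} but from subtracting \emph{illegal} rotation pairs---configurations where a run end and a singleton are simultaneously too close to the boundary correspond to no tableau at all, so the product counts them once when it should count them zero times. The paper obtains the factor $4r+2s-3$ just as you anticipate: a naive $2\cdot 2\cdot(2r+s)$ per (run, singleton) pair, reduced by one impossible run-track state and then by two further configurations that turn out to be legal when the run's last entry sits in the top-right box.
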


\begin{proof}
    The idea is similar to the 2-row case. We will think of the singletons as moving together around one circular track, and the runs as moving together around another circular track that actually transitions between $2r+s$ different states depending on whether the top right box contains the smallest number of one of the $r$ runs, the largest number of one of the $r$ runs, or one of the $s$ singletons (or a number in the middle of a run right after the singleton). The run track will have different numbers of legal rotations depending on its current state. A singleton will generally not be allowed to have just crossed the boundary or be 1 step past the boundary at the same time the end of a run is 1 or 2 steps past the boundary, which is where the subtracted term comes from. Usually both tracks will rotate by 1 position at each promotion step, except that the run track will pause when a singleton is crossing the boundary and for the following step (with some subtleties depending on the state), and the singleton track will pause during the final step when any run crosses the boundary and for the following step. This ensures that the condition about runs and singletons not ending too close to the boundary at the same time will always hold throughout the orbit. We can construct the two tracks as follows:
    \begin{enumerate}
        \item We first draw a merged track of length $n$, labeled with the numbers $1,2,\dots,n$, with dots marking the numbers in $T$. From the procedure above, we can also mark each dot as being either a singleton or part of a run.
        \item We number the singleton track with the numbers $1,2,\dots,n-2r-1$ by following the numbering of the merged track, but skipping over the 2 numbers immediately after the last number of each run that lies in the left column (wrapping around from $n$ to 1), as well as the first number not skipped. We mark a dot on the singleton track for every singleton dot on the original track.
        \item Similarly, we number the run track with the numbers $1,2,\dots,n-2s-1$ by following the numbering of the merged track but skipping over each singleton and the number after it (again wrapping around from $n$ to 1), as well as the first number not skipped, except that if the top right box is a singleton or a run member that is 1 more than a singleton, we only skip 1 for that singleton, so we get the numbers $1,2,\dots,n-2s$ instead.
    \end{enumerate}
    

    We will say that the run track is in a ``run state" if the top right entry of $T$ is the first or last entry of a run, and is in a ``singleton state" if either the top right entry is a singleton, or it is a number partway through a run, in which case the number 1 less than it must be a singleton in the left column.

    Once we have these tracks and states, the gaps on the singleton track will simply be the differences between consecutive singletons along the track. On the run track, the gaps will be the distances between the last dot \emph{in the first column} of each run and the first dot \emph{in the first column} of the next run (so omitting the dot corresponding to the top right entry of $T$, if it is a dot).
    
    We now break the proof of Theorem \ref{thm:hook_box_quadratic} into several lemmas:

\begin{lemma}\label{lem:nearhook_num_runs}
    The number of runs, number of singletons, run lengths, and gaps between the runs stay the same at every promotion step.
\end{lemma}

\begin{proof}
    The promotion steps we need to check are the ones where there is a potential state transition, i.e. when either the top right entry of $T$ gets promoted into the top row, or the top left entry of $T$ gets promoted into the top row and the top right entry slides left to fill its position, since otherwise, all entries in the left column simply decrease by 1 mod $n-1$, possibly with one of them wrapping around from the top of the column to the bottom if a 2 becomes an $n$, and in all those cases, the mod $n-1$ gaps between all the numbers do not change, so the runs, singletons, run lengths, and gaps also do not change. 
    
    We can split these trickier promotion steps into several cases:
    \begin{itemize}
        \item \tb{Case 1:} If the top right entry is the last entry of a run, then it generally gets promoted before all the other entries of its run, getting immediately replace by an $n$ in the same box, which is now the first entry of that run, since the run has now become a wraparound run. This looks exactly the same as what we saw in the runs-only case in Theorem \ref{thm:hook_box_linear_2}. Since nothing changed within the left column of $T$, the runs, singletons, and gaps between them do not change at such a step.

        The exception is that if there are one or more singletons nested within the run, the last entry of the run will instead get promoted immediately after the last singleton. Before that promotion step, the singleton will be an $n$ and the next smallest run entry will be a 3. After the promotion step, we will get an $n-1$ at the bottom of the column, an $n$ in the top right box, and a 2 in the top left box. Thus, the $n$ will be counted as a singleton nested within the run. The gaps between this singleton and other singletons will not change, because while the singleton stayed at $n$ instead of decreasing by 1 along with the other singletons, it is now a top right nested singleton and hence it gets counted as being one position earlier on the singleton track. For the run, its first and last entries within the column just decrease by 1 at the promotion step along with the first and last entries of all other runs, so the gaps between the runs also stay the same.
        
        \item \tb{Case 2:} If the top right entry gets promoted into the top row while the run track is in a singleton state, then if that entry is equal to $\ell$ before it gets promoted, the entries $3,4,\dots,\ell-1$ must all be in the left column of $T$, and also $\ell+1$ must be in the left column of $T$, or else $\ell$ and we would not be in a singleton state. Thus, the singleton is the $\ell-1$ and is nested inside the run. After the promotion step, the $\ell$ gets replaced by an $n$ in the same box, while we now have $2,3,\dots,\ell-2,\ell$ all in the first column. 
        
        If the 3 (now a 2) was the first entry of the run, the $n$ is now a member of the run, and the $\ell-2$ is a singleton nested inside the run. Thus, the run length stays the same, its gaps to other runs stay the same (since only something not in the first column changed), and the number of singletons stays the same. The singleton just decreased by 1 from $\ell-1$ to $\ell-2$, so the gaps on the singleton track also stay the same.

        The only way for the 3 (now a 2) to not be the first entry of the run is if the $n$ is a singleton nested inside the run. Thus, after the promotion step, we have both an $n-1$ and a 2 in the run, and an $n$ in the top right box. Thus, after that step, the $n-1$ is still a singleton nested inside the run, so that nested singleton also just decreased by 1 along with all the other singletons. Thus, the number of singletons and the gaps between the singletons is still preserved in this case.

        \item \tb{Case 3:} If the top right entry of $T$ is the smallest entry of a run that slides left when the top left entry of $T$ gets promoted into the top row, then the promoted entry could have been either a singleton or the last entry of the previous run. 
        
        If it was the last entry of the previous run, then the 2 that gets promoted from the top left box of $T$ gets replaced by an $n$ in the top right box of $T$, and we get a transition that looks exactly the same as in the linear case from Theorem \ref{thm:hook_box_linear_2}. Thus, none of the run lengths, run gaps, or singleton gaps change. 
        
        If the top left entry was a singleton, then we transition to a state where the top right box contains that singleton instead, and the singleton will not be nested within the run in either case. The gaps between the singletons do not change because a singleton in the top right corner only moves ahead on the track if it is nested within a run. The gap between the runs on either side of the singleton would have decreased by 1 because the later run now has an extra entry in the first row at the beginning, except that we now only skip 1 on the run track for the top right singleton instead of the 2 spaces we skipped when that singleton was in the first column, so we have also added an extra space between the runs, and thus the gap between them remains the same.

        \item \tb{Case 4:} The final possibility is that the top right entry is a singleton that slides left when a 2 gets promoted from the top left box of $T$ into the top row of $T[n]$.

        If the top left entry that gets promoted belongs to a run, then the singleton cannot be nested within that run, since a singleton in the top right box cannot be nested within a run. Thus, the last entry of that run turns from a 2 in the top left to an $n$ in the top right, while the singleton slides left. Then the gap between that run and the run before it increases by 1 because the $n$ at the start of the run is not in the left column anymore and so does not get counted, but it also decreases by 1 because only 1 space between the runs got skipped while the singleton was in the column, but 2 spaces get skipped now that the singleton is in the left column instead. Thus, the gap between that run and the run before it stays the same. For the singleton, it just decreases by 1, so its gaps to other singletons remain the same.

        The other possibility is that the top left entry that gets promoted is another singleton. Then those two singletons both decrease by 1 mod $n-1$, so the gap between them stays the same, and they remain before or after whichever runs they were previously before or after, so the gaps from those singletons to other singletons also do not change. For the run track, it is in a singleton state at both steps, just with a different singleton in the top right, so the gap between the two runs on either side of the singletons stays the same, because both before and after the promotion step, 1 number gets skipped for one of the singletons and 2 numbers get skipped for the other, so either way, 3 total numbers get skipped on the run track for those two singletons.
    \end{itemize}
    This covers all cases, completing the proof of Lemma \ref{lem:nearhook_num_runs}
\end{proof}


\begin{lemma}\label{lem:nearhook_num_rotations}
    Given a set of singleton gaps and run gaps, the number of legal ways to choose of a rotation of the singleton track and a rotation of the run track is given by the quadratic polynomial in Theorem \ref{thm:hook_box_quadratic}.
\end{lemma}

\begin{proof}
    Since the singleton track always has length $n-2r-1$ with no state transitions, it has $n-2r-1$ rotations. For the run track, the run states look exactly the same as in Theorem \ref{thm:hook_box_linear_2} except that we replace $n$ with $n-2s$ because of the $2s$ skipped numbers, for the $s$ singletons and the $s$ spaces behind them. In Theorem \ref{thm:hook_box_linear_2}, the number of such states was $(2r-1)n-2r$, so in this case there are $(2r-1)(n-2s)-2r$ run states by the same reasoning. In the singleton states, the run track has length $n-2s$, and all $n-2s$ rotations are possible, so since there are $s$ choices for which singleton could be in (or right before) the entry in the top right box, there are $s(n-2s)$ total rotations for the run track in singleton states. Putting this together, we get $(2r+s-1)(n-2s)-2r$ total ways to choose a state and a rotation for the run track. Thus, we can choose rotations for both tracks in $(n-2r-1)((2r+s-1)(n-2s)-2r)$ ways, explaining the first part of the expression in Theorem \ref{thm:hook_box_quadratic}.

    To explain the subtracted term, we need to describe which pairs of track rotations cannot actually happen, and show that all other pairs of rotations can happen. With a  few exceptions, the illegal rotations are when the run track is at a position either 1 or 2 steps after the end of a run has crossed the boundary, while the singleton track is at a position where a singleton has just crossed the boundary or is 1 step past crossing the boundary. 
    
    To see this, note that if the singleton is nested within the run, it must finish crossing the boundary and then move an extra step before the last dot of the run can finish crossing the boundary, and the run step will stay still while the singleton is crossing and for the step after, since the singleton and the extra space behind it are nested within the run. 
    
    If the singleton is not nested within the run, then there must be at least 2 extra spaces after the end of the run and before the singleton, so the singleton cannot cross the boundary immediately after the run finishes crossing, or it would be too close to the run. Instead, the singleton track stays still for the 2 steps after a run finishes crossing the boundary, because while the next singleton on the track would seem to be getting closer to the boundary at each step since it does on the merged track, it actually does not because the 2 extra spaces after the end of the run that are skipped on the singleton track move from the start of the track to the end of the track, so actually the number of the first singleton on the singleton track does not change. 
    
    This would seem to give a subtracted term of $2r\cdot 2s\cdot(2r+s)=2rs(4r+2s)$, since there are $r$ choices for a run and 2 choices for whether it has been 1 or 2 steps since the run finished crossing the boundary, then $s$ choices for a singleton and 2 choices for whether it has just crossed the boundary or is 1 step past that, and $2r+s$ choices for which state the run track is in. However, one of those $2r+s$ states is actually inconsistent with the desired run having just been promoted. Namely, if the next number after the run that was just promoted is the start of another run, the smallest entry of the next run cannot be in the top right box of $T$, because it is now the smallest number in $T$, so it must go in the top left box of $T$. Similarly, if the next number after the just-promoted run is a singleton, then that singleton again must be in the top left box of $T$, not the top right. Either way, we can actually eliminate one of the $2r+s$ states for the top right box, so there are only $2r+s-1$ possibilities, giving a count of $4rs(2r+s-1)=2rs(4r+2s-2).$

    However, there is one other subtlety, which is that if the last entry of the just-promoted run is currently in the top right box, then we actually only need a gap of 2 between the last entry of the run and the singleton, since we just need a gap of 3 from the last entry of the run \emph{in the column} to the singleton. Thus, it is possible in this case for the singleton to get promoted at the same time the end of the run is 2 spaces past the boundary. The run track then stays still for another step after the singleton gets promoted, so the state where the run ends 2 spaces past the boundary and the singleton is 1 step past the boundary is also legal. This is 2 states we need to add back for each pair of a run and a singleton, so we need to add back $2rs$, or take away $2rs$ from the subtracted term. This gives a subtracted term of $2rs(4r+2s-2)-2rs=2rs(4r+2s-3)$, matching the formula in Theorem \ref{thm:hook_box_quadratic}.

    To check that all other rotations are possible, and that there is a 1-to-1 correspondence between pairs of track rotations and tableaux, we need to be able to reconstruct a tableau from each legal pair of track rotations. This can be done in essentially the same way as for the 2-row case. To build the merged track, we read along the pair of tracks, adding one number to the merged track at each step. When we encounter a singleton, we add the singleton and a space behind it to the merged track while adding two extra spaces to the run track at the current position, except that if the singleton goes in the top right box or comes right before the entry in the top right box (which we can tell because we know from the current state which singleton or run end goes in the top right box), then we only add one extra space to the run track. Similarly, whenever we reach the end of a run, we add two extra spaces to the singleton track at the current position, except that if the run end is in the top right box, we only add one extra space. Then the resulting merged track tells us which entries belong in $T$, and we also know which one belongs in the top right box, so we can recover the tableau $T[n]$ from the tracks.
\end{proof}

\begin{lemma}\label{lem:nearhook_equal_orbits}
    Within the subset of $\T_{\tn{near-hook}}(n,r,s)$ corresponding to a given list of singleton gaps and run gaps, all orbits have the same length, so all orbit lengths divide the quadratic polynomial from Theorem \ref{thm:hook_box_quadratic} that counts the size of that subset.
\end{lemma}

\begin{proof}
    The argument is the same as in Lemma \ref{lem:divisor_of_num_tableaux}. Namely, all orbits of tableaux corresponding to given gap sequences are the same length, because if we just shift the starting position of the singleton track by one position while keeping starting position of the run track the same, then when we compare the two orbits, the offsets between the tracks will remain the same except at a constant number of steps where a singleton or the end of a run is crossing the boundary. Thus, the orbit lengths will be the same, so the set of tableaux for a given pair of gap sequences will be partitioned into orbits of equal length, and hence all the orbit lengths will divide the total number of such tableaux, given by the quadratic polynomial from Theorem \ref{thm:hook_box_quadratic}.
\end{proof}

Combining Lemmas \ref{lem:nearhook_num_rotations} and \ref{lem:nearhook_equal_orbits} completes the proof of Theorem \ref{thm:hook_box_quadratic}.
\end{proof}

We end with an example to illustrate the portion of the promotion process where the green run crosses the boundary for the tableau from Example \ref{ex:near-hook}:

\begin{example}
    Focusing on the green run, we begin by backing up a few steps to when the green run is all at the top of $T$ and then starts being promoted. We get the promotion steps shown below, focusing just on the relevant portion of the tableau. We also show the relevant portion of the main track and the run and singleton tracks next to each tableau. The track is in a singleton state throughout, since the top right entry is always in the middle of the run. The first entry of the run gets promoted first:
    \begin{center}
        \raisebox{-\height}{\includegraphics[width=8cm]{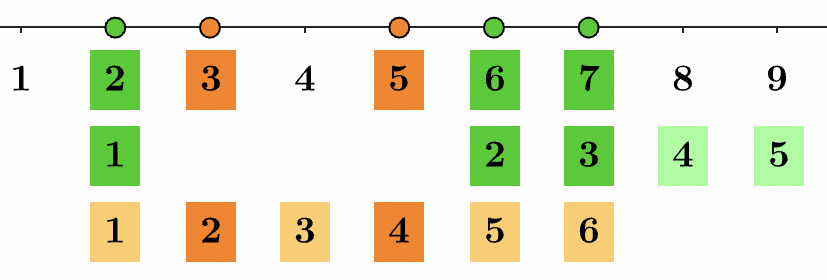}}\hspace{1cm} \begin{ytableau}
            1 & 4 & \cdots \\
            *(green) 2 & *(green) 6 \\
            *(orange) 3 \\
            *(orange) 5 \\
            *(green) 7 \\
            \vdots
        \end{ytableau}
    \end{center}
    Since $n=15$, $r=2,$ $s=3,$ and the run track is in a singleton state, the run track has length $n-2s=15-6=9,$ while the singleton track has length $n-2r-1=15-4-1=10.$ The next thing that happens is that the 2 in the run gets promoted, with both tracks rotating one position:
    \begin{center}
        \raisebox{-\height}{\includegraphics[width=8cm]{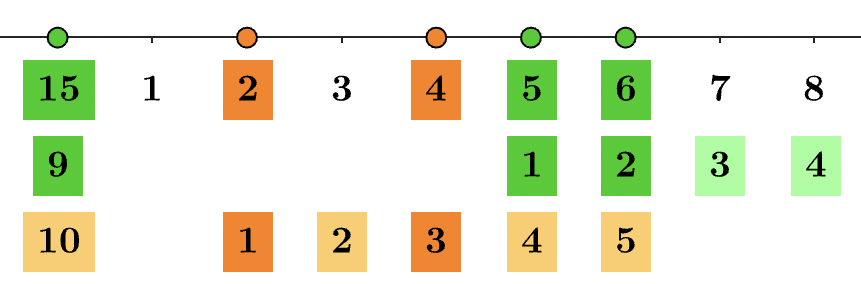}}\hspace{1cm}
        \begin{ytableau}
            1 & 3 & \cdots \\
            *(orange) 2 & *(green) 5 \\
            *(orange) 4 \\
            *(green) 6 \\
            \vdots \\
            *(green) 15
        \end{ytableau}
    \end{center}
    Next, the first singleton gets promoted, with the run track staying still:
    \begin{center}
        \raisebox{-\height}{\includegraphics[width=8cm]{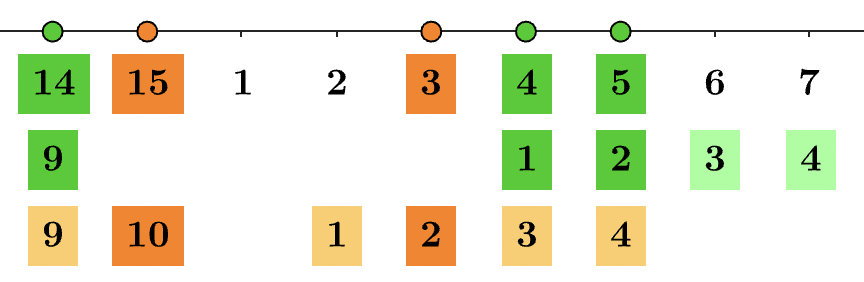}}\hspace{1cm}
        \begin{ytableau}
            1 & 2 & \cdots \\
            *(orange) 3 & *(green) 4 \\
            *(green) 5 \\
            \vdots \\
            *(green) 14 \\
            *(orange) 15
        \end{ytableau}
    \end{center}
    Then the 4 from the run track gets promoted and becomes a 15, jumping ahead of the other singleton. Both tracks rotate at this step:
    \begin{center}
        \raisebox{-\height}{\includegraphics[width=8cm]{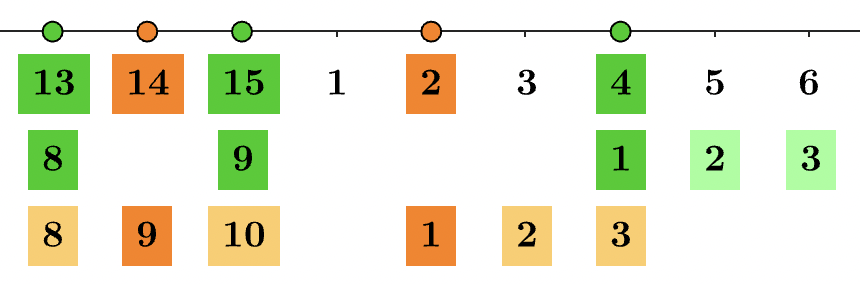}}\hspace{1cm}
        \begin{ytableau}
            1 & 3 & \cdots \\
            *(orange) 2 & *(green) 15 \\
            *(green) 4 \\
            \vdots \\
            *(green) 13 \\
            *(orange) 14
        \end{ytableau}
    \end{center}
    Then the other singleton gets promoted, with the run track staying still for that step and the following step:
    \begin{center}
        \raisebox{-\height}{\includegraphics[width=8cm]{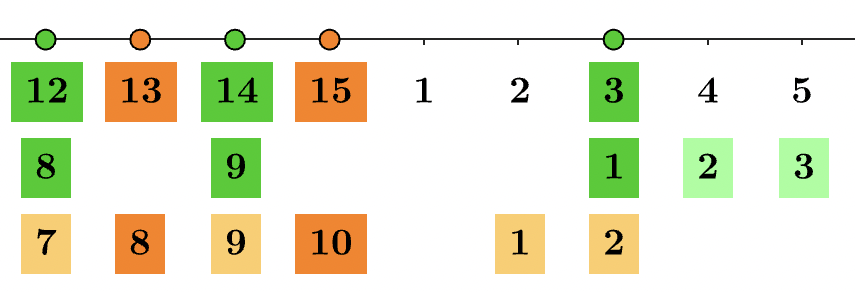}}\hspace{1cm}
        \begin{ytableau}
            1 & 2 & \cdots \\
            *(green) 3 & *(green) 14 \\
            \vdots \\
            *(green) 12 \\
            *(orange) 13 \\
            *(orange) 15
        \end{ytableau}
    \end{center}
    \begin{center}
        \raisebox{-\height}{\includegraphics[width=8cm]{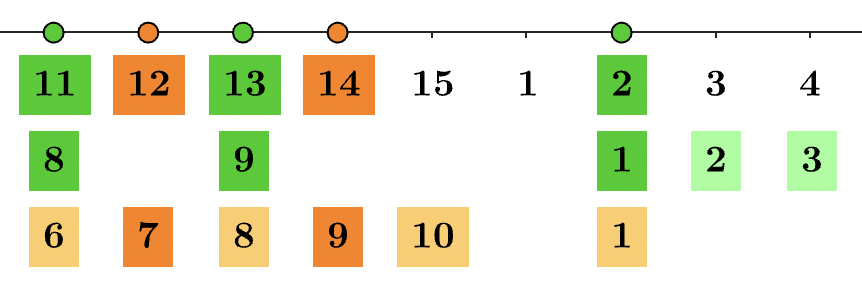}}\hspace{1cm}
        \begin{ytableau}
            1 & 3 & \cdots \\
            *(green) 2 & *(green) 13 \\
            \vdots \\
            *(green) 11 \\
            *(orange) 12 \\
            *(orange) 14
        \end{ytableau}
    \end{center}
    Then the last number of the run gets promoted, with both tracks rotating:
    \begin{center}
        \raisebox{-\height}{\includegraphics[width=8cm]{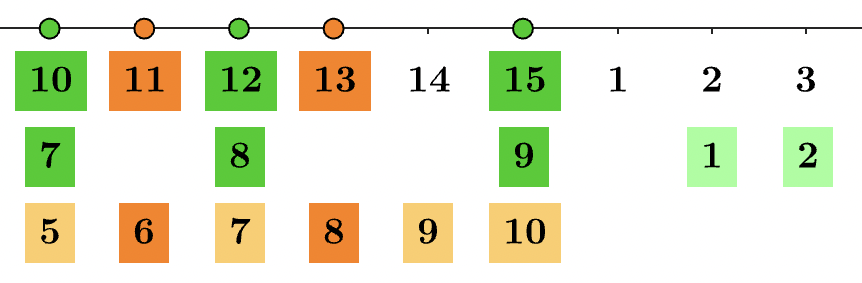}}\hspace{1cm}
        \begin{ytableau}
            1 & 2 & \cdots \\
            \vdots & *(green) 12 \\
            *(green) 10 \\
            *(orange) 11 \\
            *(orange) 13 \\
            *(green) 15
        \end{ytableau}
    \end{center}
    After that, the run track continues to rotate while the singleton track stays still for two steps:
    \begin{center}
        \raisebox{-\height}{\includegraphics[width=8cm]{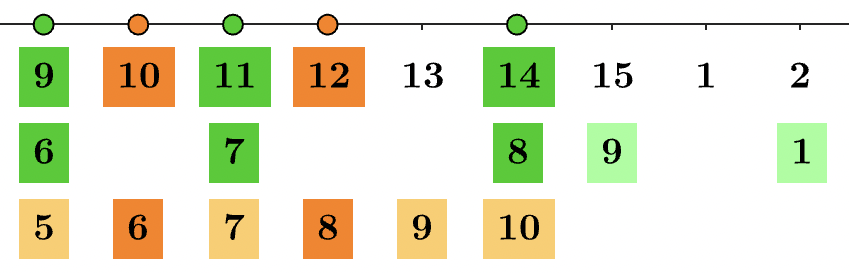}}\hspace{1cm}
        \begin{ytableau}
            1 & 2 & \cdots \\
            \vdots & *(green) 11 \\
            *(green) 9 \\
            *(orange) 10 \\
            *(orange) 12 \\
            *(green) 14
        \end{ytableau}
    \end{center}
    \begin{center}
        \raisebox{-\height}{\includegraphics[width=8cm]{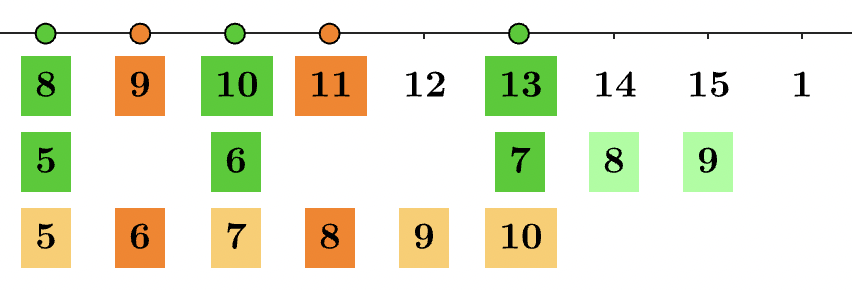}}\hspace{1cm}
        \begin{ytableau}
            1 & 3 & \cdots \\
            \vdots & *(green) 10 \\
            *(green) 8 \\
            *(orange) 9 \\
            *(orange) 11 \\
            *(green) 13
        \end{ytableau}
    \end{center}
    After that, both tracks will resume rotating as usual until the next run reaches the boundary.
\end{example}

\section*{Acknowledgments}

I'm thankful to Oliver Pechenik for getting me started on this project and for several helpful discussions about it. I'm also thankful to Jessica Striker for a helpful conversation related to this. I used Sage \cite{sage} to help formulate and test various conjectures.

\printbibliography

\end{document}